\documentclass[smallextended,envcountsame,envcountsect]{svjour3}
\smartqed

\usepackage{amsmath,amssymb}
\usepackage[T1]{fontenc}
\usepackage{lmodern}
\usepackage{microtype}
\usepackage{cite}
\usepackage{xurl}
\usepackage{graphicx}
\usepackage{booktabs}
\usepackage{tabularx}
\usepackage{array}
\usepackage{enumitem}
\usepackage{algorithm}
\usepackage{algpseudocode}
\usepackage{tikz}
\usetikzlibrary{arrows.meta,positioning,patterns}
\usepackage{xcolor}
\usepackage{placeins}
\usepackage{flafter}
\usepackage[hidelinks]{hyperref}
\newcommand{\doi}[1]{\href{https://doi.org/#1}{\nolinkurl{https://doi.org/#1}}}
\hypersetup{pdftitle={Greedy sign-based box splitting},
 pdfauthor={Alexander Yu. Gornov, Tatiana S. Zarodnyuk, Anton S. Anikin, Alexander V. Gasnikov},
 pdfsubject={Exact solvable classes, sharp thresholds and order-oracle complexity},
 pdfkeywords={Sign-based optimisation; box localisation; comparison oracle; global optimisation}}

\newcolumntype{P}[1]{>{\raggedright\arraybackslash}p{#1}}
\newcolumntype{L}{>{\raggedright\arraybackslash}X}


\spnewtheorem{counterexample}[theorem]{Counterexample}{\bfseries}{\itshape}
\spnewtheorem{openproblem}[theorem]{Open problem}{\bfseries}{\itshape}

\newcommand{\R}{\mathbb{R}}
\newcommand{\Rn}{\mathbb{R}^n}
\newcommand{\E}{\mathbb{E}}
\newcommand{\Prob}{\mathbb{P}}
\newcommand{\eps}{\varepsilon}
\newcommand{\sgn}{\operatorname{sign}}
\newcommand{\diag}{\operatorname{diag}}
\newcommand{\ip}[2]{\langle #1,#2\rangle}
\newcommand{\ninf}[1]{\left\|#1\right\|_\infty}
\newcommand{\ninfw}[1]{\left\|#1\right\|_{\infty,w}}
\newcommand{\Bx}{\mathcal{B}}
\newcommand{\CSC}{\mathcal{U}}
\newcommand{\MSC}{\mathcal{M}}
\newcommand{\DSC}{\mathcal{D}}
\newcommand{\CMP}{\mathcal{C}}
\newcommand{\algname}[1]{{\normalfont\textsc{#1}}}
\newcommand{\CUBE}{\algname{Cube-Sign}}
\newcommand{\CUBEF}{\algname{Cube-Freeze}}
\newcommand{\CUBES}{\algname{Cube-Smooth}}
\newcommand{\CUBEV}{\algname{Cube-Vote}}
\newcommand{\CUBEC}{\algname{Cube-Compare}}

\definecolor{ndblue}{RGB}{31,86,140}
\definecolor{ndgreen}{RGB}{18,124,86}
\definecolor{ndred}{RGB}{176,44,44}
\definecolor{ndgrey}{RGB}{110,110,110}
\definecolor{ndlight}{RGB}{237,243,249}
\newcommand{\keybox}[1]{%
  \begin{center}\fcolorbox{ndblue!55}{ndlight}{%
  \parbox{0.95\columnwidth}{\small #1}}\end{center}}

\renewcommand{\subclassname}{{\bfseries Mathematics Subject Classification (2020)}\enspace}
\journalname{Journal of Global Optimization}

\begin{document}
\raggedbottom

\title{Greedy sign-based box splitting:\\ exact solvable classes, sharp thresholds
       and order-oracle complexity}

\titlerunning{Greedy sign-based box splitting}
\authorrunning{A. S. Anikin et al.}

\author{Anton S. Anikin \and \\
		Alexander Yu. Gornov \and \\
        Tatiana S. Zarodnyuk \and \\
        Alexander V. Gasnikov}

\institute{A. S. Anikin \and A. Yu. Gornov \and T. S. Zarodnyuk \at
  Matrosov Institute for System Dynamics and Control Theory,
  Siberian Branch of the Russian Academy of Sciences,
  134 Lermontov St., Irkutsk 664033, Russia
  \and
  A. V. Gasnikov \at
  Moscow Institute of Physics and Technology, 9 Institutskiy per.,
  Dolgoprudny, Moscow Region 141701, Russia
  \and
  A. V. Gasnikov \at
  Innopolis University, 1 Universitetskaya St., Innopolis 420500, Russia}

\date{}

\maketitle

\begin{abstract}
Greedy box splitting reads gradient signs at the centre and retains one
smaller box. Its summable travel budget makes an incorrect exclusion
irreversible. We determine when this geometry is safe. Halving is minimax
optimal in the sign-vector oracle model on a coordinatewise sign-consistent
class that need not be convex. A dimensionless sign defect yields a sharp
scalar envelope, attained by one smooth convex potential; a three-dimensional
trajectory ends farther from the target than its initial radius.
For quadratics, universal convergence with aspect $w$ and contraction $\beta$
holds exactly when the weighted absolute row defect is at most $2\beta-1$.
Perron weights minimise this defect. Adaptive half-sizes avoid initial
enlargement and, under persistent error, converge to an exact capped
fixed-point limit. For binary comparisons, an independent probe fraction $q$
permits universal localisation on common-target monotone sections exactly
when $q\le2\beta-1$; coupled probes give the sharp threshold $2/3$.
Matching information bounds distinguish comparisons from sign-vector queries.
Error bands give finite-time guarantees and sharp residual floors.
Extensions and reproducible experiments identify the assumptions and costs
of freezing, smoothing, voting, multistart and random-matrix certificates.
\keywords{Sign-based optimisation \and Box localisation \and Comparison oracle
\and Generalised diagonal dominance \and Sharp error bounds \and Global optimisation}
\subclass{90C26 \and 90C30 \and 65K05}
\end{abstract}

\section{Introduction}\label{sec:intro}

A box can become small for two very different reasons: it may localise a
solution, or it may simply shrink around the wrong point. We study the simplest
scheme that exposes this distinction. At the centre of a box, read the signs
of the partial derivatives, move towards the indicated corner, and retain a
smaller box. For a cube split into $2^n$ equal subcubes, this is exactly
\begin{equation}\label{eq:signdescent}
 \begin{aligned}
 c_{k+1}&=c_k-2^{-k-1}r_0s_k,\\
 s_k&\in\{-1,+1\}^n,\qquad
 s_{k,i}=\sgn\partial_if(c_k)\quad\text{if }\partial_if(c_k)\ne0.
 \end{aligned}
\end{equation}
Only one sign vector is needed at each iteration. There is no line search and
no list of alternative boxes. The diameter halves, but the discarded regions
are never revisited. The central question is therefore not whether the boxes
shrink, but \emph{when a sign is a valid certificate for discarding a region}.

\subsection{A two-dimensional warning}
\label{sec:early-example}\label{sec:counterex}
Consider the strongly convex quadratic
\begin{equation}\label{eq:opening-example}
 f(x)=\tfrac12x^\top Hx,\qquad
 H=\begin{pmatrix}1&1\\1&2\end{pmatrix},\qquad
 c_0=(0.9,-1),\quad r_0=1,\quad x^*=0.
\end{equation}
The initial cube contains $0$ on its boundary. Since
$\nabla f(c_0)=(-0.1,-1.1)$, the first centre is $(1.4,-0.5)$ and the new
half-side is $1/2$. The first coordinate has moved \emph{away} from the
minimiser. All remaining displacements in that coordinate sum to at most
$1/2$, so recovery is impossible. The trajectory is illustrated in Fig.~\ref{fig:counterex}.
In fact the limit is $(0.9,-0.5)$, with
argument error $0.9r_0$ and objective-gap ratio $41/101$.

\begin{figure}[htbp]
\centering\includegraphics[width=\textwidth]{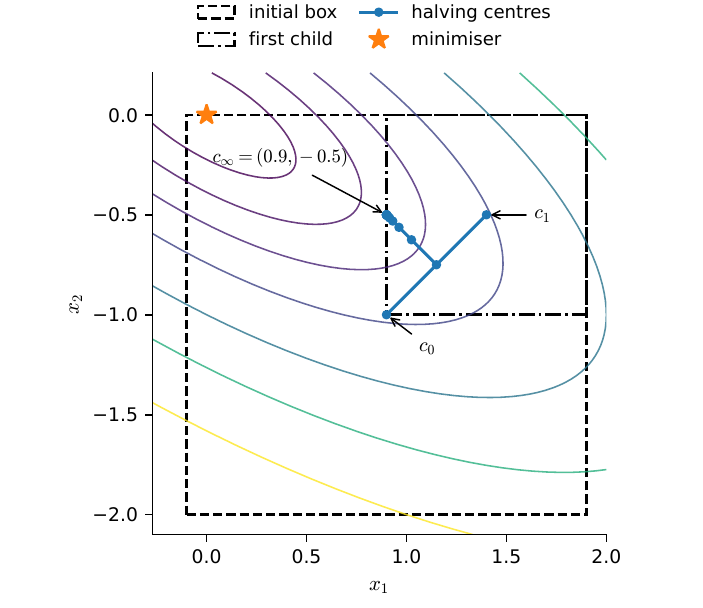}
\caption{Exact quadratic counterexample, $H=\left(\begin{smallmatrix}1&1\\1&2\end{smallmatrix}\right)$,
$c_0=(0.9,-1)$, $r_0=1$, $\beta=1/2$. Elliptical contours are level sets of
$f(x)=x^\top Hx/2$. The first child $[0.9,1.9]\times[-1,0]$ excludes the
minimiser. The trajectory converges to $(0.9,-0.5)$; all boxes are nested,
but they no longer contain the minimiser}
\label{fig:counterex}
\end{figure}

The obstruction is geometric, not a lack of convexity. In the first gradient
coordinate, the cross contribution $H_{12}c_{0,2}=-1$ outweighs
$H_{11}c_{0,1}=0.9$. Two remedies suggest themselves: retain overlapping
children, leaving room to correct a sign error, or change the aspect of the
box. With contraction $\beta\in[1/2,1)$ and aspect $w>0$, the exact quadratic
criterion will be
\begin{equation}\label{eq:opening-criterion}
 \theta_w(H):=\max_i\frac{\sum_{j\ne i}|H_{ij}|w_j}{H_{ii}w_i}
 \ \le\ 2\beta-1.
\end{equation}
For~\eqref{eq:opening-example}, $w=(1,1/\sqrt2)$ reduces the defect from $1$
to $1/\sqrt2$ and permits $\beta\ge(1+1/\sqrt2)/2$. The initial radius must
then be at least $\sqrt2$: preconditioning cannot be credited with success if
it excludes the target before the first query.

\subsection{Results and their scope}
The paper gives an exact account of this trade-off. On the class whose partial
derivatives have the correct coordinatewise signs, halving is minimax optimal
in the sign-vector oracle model. A defect parameter $\theta$ permits uncertain
signs near already accurate coordinates. We derive a scalar worst-case
recursion and realise its entire trajectory by one smooth convex potential;
three dimensions already allow a limiting error larger than the initial
radius. For quadratics,~\eqref{eq:opening-criterion} is necessary and sufficient
for \emph{every} initial box containing the target and \emph{every} resolution
of zero derivatives. Positive diagonal scaling is characterised by a
nonnegative comparison matrix. Adaptive half-sizes remove the need to enlarge
the initial box, and their limit under persistent coordinatewise error has an
exact capped fixed-point description.

For an oracle returning only which of two values is larger, we prove an exact
trade-off between probe distance and contraction. Coupling probe distance to
centre displacement gives the sharp threshold $\beta=2/3$. The class then
requires only strictly monotone coordinate sections with a common target,
not differentiability. A separate lower bound counts \emph{binary comparisons},
not sign vectors. Finally, a single perturbation model distinguishes exact
localisation from a noise floor. Freezing, smoothing, voting, multistart and
random-matrix certificates are stated with their additional assumptions and
costs in Sect.~\ref{sec:extensions} and proved in the appendices.

These are results for a structured global-localisation problem. They do not
make greedy single-box splitting a general-purpose global optimiser, nor do
they imply that a finite sample of Hessians certifies a uniform hypothesis.

\subsection{Relation to existing methods}
Sign-based optimisation includes Rprop~\cite{Riedmiller1993,Igel2003},
signSGD~\cite{Bernstein2018}, stochastic sign descent~\cite{Safaryan2021} and
error-feedback methods~\cite{Karimireddy2019}. Our distinctive restriction is
the summable geometric displacement budget coupled to nested boxes. Adaptive
sign methods can revise their effective travel budget; the present method
cannot. Moreover, error feedback generally uses compression magnitudes, which
are not supplied by a pure sign oracle. Majority voting itself is classical;
its role here is quantified through the cost of irreversible errors.

Comparison-oracle optimisation includes accelerated coordinate constructions
\cite{Lobanov2024}, gradient-direction estimation for quasi-convex objectives
\cite{GasnikovQuasi2024}, dichotomy on a square~\cite{Chervonenkis2025}, and
optimal gradient testing and estimation~\cite{TaoZhang2026}. Recent results
also address nonconvex stationarity~\cite{Wang2026} and preference-level-set
geometry~\cite{ScheinbergXiong2026}. We study a different question: when does one
coordinate comparison justify an irreversible box reduction? Our
$\eps$ measures distance to a fixed target. It must not be identified with
accuracy of a normalised gradient, a stationarity tolerance, or a preference
level-set gap.

Box subdivision and safe exclusion are classical in global optimisation,
including DIRECT~\cite{Jones1993}, diagonal approaches
\cite{SergeyevKvasov2017}, interval methods~\cite{Neumaier1990} and
branch-and-bound~\cite{HorstPardalos1995}. Such methods retain alternatives or
use bounds to justify exclusion. Computational background in global search,
optimal control and sparse quadratic optimisation is given in
\cite{Gornov2009,Zarodnyuk2010,Anikin2016,Gasnikov2021}.
Here only one child survives, so structural
assumptions replace backtracking. Likewise, comparison with general
localisation complexity~\cite{NemirovskiYudin1983} must account for our much
narrower function classes.

The matrix tools are the classical Perron--Frobenius and Collatz--Wielandt
theory~\cite{Varga2000,BermanPlemmons1994,Collatz1942,Wielandt1950}.
Strict generalised diagonal dominance is distinguished below from non-strict
scaled diagonal dominance~\cite{AhmadiMajumdar2019}. Coordinate sign
conditions relative to one target are also distinct from the two-point
conditions of nonlinear matrix theory~\cite{OrtegaRheinboldt1970,MoreRheinboldt1973}.
Connections with smoothing~\cite{NesterovSpokoiny2017,Duchi2015}, noisy binary
search~\cite{KarpKleinberg2007} and stochastic comparison averaging
\cite{Smirnov2024} are used only under explicitly matched oracle assumptions.

Section~\ref{sec:algorithm} develops the geometry and the exact class;
Sect.~\ref{sec:robust} gives the sharp defect envelope;
Sect.~\ref{sec:quad} studies matrix geometry;
Sect.~\ref{sec:compare} treats comparisons and error bands.
The remaining main sections summarise extensions, present controlled numerical
experiments, and identify the unresolved questions. Full proofs and additional
experiments follow the references.

\section{Geometry, exact localisation and information}
\label{sec:algorithm}\label{sec:csc}

\subsection{Notation and the oracle}\label{sec:notation}\label{sec:oracles}
For $c\in\Rn$ and a positive vector $r$, write
$\Bx(c,r)=\{x:|x_i-c_i|\le r_i,\ i=1,\dots,n\}$.
Products, inequalities and minima of vectors are componentwise.
The aspect $w\in\R^n_{>0}$ is fixed unless stated otherwise, and
\[
 \ninfw{u}=\max_i |u_i|/w_i,\qquad
 R_k=\beta^kr_0,\quad r_k=R_kw,\quad
 M_k=\ninfw{c_k-x^*}.
\]
Here $r_0$ in $R_k$ is a scalar; $r_k$ denotes the half-size vector.
For adaptive boxes we use a general vector $r^{\mathrm{init}}$ initially.
We assume $x^*\in B_0:=\Bx(c_0,r_0w)$ and $r_0>0$.
Differentiability on a closed box means differentiability on a neighbourhood
of it. Set $[t]_+=\max\{t,0\}$, $e_k=x^*-c_k$ and
$u_{k,i}=(c_{k,i}-x_i^*)/w_i$.

A sign query returns $\sgn\nabla f(c)\in\{-1,0,+1\}^n$; it does not return
magnitudes. Every zero is resolved to $-1$ or $+1$ by a rule depending only on
query points, sign answers and previous choices. Unless an existence statement
explicitly chooses a rule, the guarantees hold for \emph{all} such rules.
A sign-vector query and $n$ binary comparisons are different information units
(Table~\ref{tab:oracles}). Arithmetic outside the oracle costs $O(n)$ per
basic iteration; evaluating a dense quadratic gradient costs $O(n^2)$.

\begin{table}[htbp]
\caption{Oracle units. Equality in a binary comparison is resolved to either
binary answer; it does not create a third branch.}\label{tab:oracles}
\begin{tabularx}{\textwidth}{@{}P{28mm}LP{29mm}@{}}
\toprule
Oracle & Returned information & Cost of one iteration\\
\midrule
Sign vector & All $n$ coordinate signs, including zeros & One vector query\\
Binary comparison & Which of two values is larger & $n$ comparisons\\
Value oracle & A real function value & $2n$ values for the comparison rule\\
Full gradient & $n$ signed magnitudes & One gradient evaluation\\
\bottomrule
\end{tabularx}
\end{table}

\begin{algorithm}[htbp]
\caption{\CUBE$(\beta,w)$}\label{alg:cube}
\begin{algorithmic}[1]
\Require $c_0$, $r_0>0$, $w>0$, $1/2\le\beta<1$, number of steps $K$
\State $c\gets c_0$, $r\gets r_0w$
\For{$k=0,\dots,K-1$}
 \State Query $\sgn\nabla f(c)$ and resolve its zeros to obtain $s\in\{-1,+1\}^n$
 \State $c\gets c-(1-\beta)r\odot s$; \quad $r\gets\beta r$
\EndFor
\State \Return $c$ and the box $\Bx(c,r)$
\end{algorithmic}
\end{algorithm}

\subsection{The travel budget}\label{sec:locking}
\begin{lemma}[nestedness and sign descent]\label{lem:nested}\label{lem:sign-descent}
For every sequence of signs, Algorithm~\ref{alg:cube} satisfies
\begin{equation}\label{eq:err-rec}
 c_{k+1}=c_k-(1-\beta)R_kw\odot s_k,\qquad
 e_{k+1,i}=e_{k,i}+(1-\beta)r_{k,i}s_{k,i}.
\end{equation}
Its boxes are nested, its centres converge, and the total possible displacement
after centre $c_k$ is at most $r_{k,i}$ in coordinate $i$.
For $\beta=1/2$ it selects one of the $2^n$ equal subboxes; for $\beta>1/2$
the candidate children overlap.
\end{lemma}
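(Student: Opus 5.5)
The plan is to unwind Algorithm~\ref{alg:cube} directly; every claim is elementary geometry of the update. Let $R_k=\beta^kr_0$ and $r_k=R_kw$. The $k$-th pass replaces $c$ by $c-(1-\beta)r\odot s$ with $r=r_k$, which is exactly the first identity in~\eqref{eq:err-rec}. Since $x^*$ is fixed, subtracting from $x^*$ gives $e_{k+1}=x^*-c_{k+1}=e_k+(1-\beta)r_k\odot s_k$ coordinatewise. None of this depends on how the signs were produced, so the statement holds for every sign sequence and every zero-resolution rule.

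\emph{Travel budget and convergence.} In coordinate $i$ we have $|c_{k+1,i}-c_{k,i}|=(1-\beta)\beta^kr_0w_i$. Summing the geometric tail from index $k$ gives
\[
\sum_{m\ge k}|c_{m+1,i}-c_{m,i}|\le(1-\beta)r_0w_i\sum_{m\ge k}\beta^m=\beta^kr_0w_i=r_{k,i}.
\]
This bounds the total possible displacement after $c_k$ by $r_{k,i}$. Absolute summability makes $(c_k)$ Cauchy, so the centres converge, and the limit satisfies $|c_{\infty,i}-c_{k,i}|\le r_{k,i}$.

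\emph{Nestedness.} Take $x\in\Bx(c_{k+1},r_{k+1})$. Then
\[
|x_i-c_{k,i}|\le|x_i-c_{k+1,i}|+|c_{k+1,i}-c_{k,i}|\le\beta r_{k,i}+(1-\beta)r_{k,i}=r_{k,i},
\]
so $\Bx(c_{k+1},r_{k+1})\subseteq\Bx(c_k,r_k)$.

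\emph{Children.} The possible children of $\Bx(c_k,r_k)$ are indexed by $s\in\{-1,+1\}^n$. In coordinate $i$ the child's interval is $[c_{k,i}-s_ir_{k,i}(1-\beta)-\beta r_{k,i},\ c_{k,i}-s_ir_{k,i}(1-\beta)+\beta r_{k,i}]$.
\begin{itemize}
\item For $\beta=1/2$ the two choices of $s_i$ give $[c_{k,i}-r_{k,i},c_{k,i}]$ and $[c_{k,i},c_{k,i}+r_{k,i}]$. These are the two halves of the parent interval, and their products are exactly the $2^n$ equal subboxes.
\item For $\beta>1/2$ the two intervals are $[c_{k,i}-r_{k,i},\,c_{k,i}+(2\beta-1)r_{k,i}]$ and $[c_{k,i}-(2\beta-1)r_{k,i},\,c_{k,i}+r_{k,i}]$. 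Their intersection has length $2(2\beta-1)r_{k,i}>0$ in every coordinate, so any two children overlap, with nonempty interior.
\end{itemize}

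I expect no real obstacle. The only point needing care is that the displacement bound is the geometric tail sum, an upper bound rather than necessarily attained. It is nonetheless sharp, since it is attained when $s_{m,i}$ is constant in $m$.
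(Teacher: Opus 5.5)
Your proof is correct and takes the same route as the paper. Nestedness comes from the identity displacement plus new half-size $=(1-\beta)r_{k,i}+\beta r_{k,i}=r_{k,i}$, and the travel bound from the geometric tail $\sum_{j\ge k}(1-\beta)r_{j,i}=r_{k,i}$; you also write out the explicit child intervals for the $\beta=1/2$ versus $\beta>1/2$ claim, which the paper leaves implicit.
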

\begin{proof}
In each coordinate, centre displacement plus new half-size is
$(1-\beta)r_{k,i}+\beta r_{k,i}=r_{k,i}$. The remaining displacement bound is
the geometric sum $\sum_{j\ge k}(1-\beta)r_{j,i}=r_{k,i}$.\qed
\end{proof}

We write $c_\infty=\lim_{k\to\infty}c_k$ and
$M_\infty=\ninfw{c_\infty-x^*}=\lim_{k\to\infty}M_k$.

Call a step \emph{wrong} in coordinate $i$ when $e_{k,i}\ne0$ and
$s_{k,i}=\sgn e_{k,i}$: the centre moves away from the target.
\begin{lemma}[locking]\label{lem:lock}
A wrong step implies, for every continuation,
\[
 \lim_{m\to\infty}|e_{m,i}|\ge |e_{k,i}|-(2\beta-1)r_{k,i}.
\]
Thus a wrong sign at $\beta=1/2$ preserves at least the pre-step coordinate
error forever.
\end{lemma}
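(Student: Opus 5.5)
The plan is to combine the error recursion \eqref{eq:err-rec} with the travel budget of Lemma~\ref{lem:nested}. Fix a coordinate $i$ and a step $k$ that is wrong in that coordinate, so $e_{k,i}\neq 0$ and $s_{k,i}=\sgn e_{k,i}$. By \eqref{eq:err-rec}, the step adds $(1-\beta)r_{k,i}$ to $e_{k,i}$ in the direction of its sign. Hence
\[
 |e_{k+1,i}| = |e_{k,i}|+(1-\beta)r_{k,i},\qquad \sgn e_{k+1,i}=\sgn e_{k,i}.
\]
This holds because the two terms have the same sign and nothing cancels.

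Next we bound what any continuation can undo. For $m>k+1$,
\[
 e_{m,i}=e_{k+1,i}+\sum_{j=k+1}^{m-1}(1-\beta)r_{j,i}s_{j,i},
\]
and the tail sum has absolute value at most $\sum_{j\ge k+1}(1-\beta)r_{j,i}=r_{k+1,i}=\beta r_{k,i}$. This is the geometric-sum identity already used in the proof of Lemma~\ref{lem:nested}. The reverse triangle inequality then gives, for every $m$,
\[
 |e_{m,i}|\ge |e_{k,i}|+(1-\beta)r_{k,i}-\beta r_{k,i}=|e_{k,i}|-(2\beta-1)r_{k,i}.
\]
The limit $c_\infty$ exists by Lemma~\ref{lem:nested}, so $\lim_m|e_{m,i}|$ exists. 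Since the bound holds for every $m$, it passes to the limit. The right-hand side may be negative, in which case the inequality is trivial.

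For $\beta=1/2$ we have $2\beta-1=0$, so the bound reads $\lim_m|e_{m,i}|\ge|e_{k,i}|$. This is exactly the claim that the pre-step coordinate error is preserved forever.

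There is no real obstacle here. The only point needing care is the bookkeeping: the remaining budget after step $k$ is $r_{k+1,i}=\beta r_{k,i}$, not $r_{k,i}$, because the wrong step itself has already spent $(1-\beta)r_{k,i}$. Mixing these up would yield the weaker constant $|e_{k,i}|-\beta r_{k,i}$ instead of the stated $(2\beta-1)r_{k,i}$.
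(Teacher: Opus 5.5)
Your proposal is correct and follows the paper's proof: the wrong step increases the coordinate error by exactly $(1-\beta)r_{k,i}$, all later displacements total at most $r_{k+1,i}=\beta r_{k,i}$, and subtracting gives the bound. Your version additionally writes out the reverse triangle inequality and the passage to the limit, which the paper's two-line proof leaves implicit.
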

\begin{proof}
The wrong displacement increases the distance by $(1-\beta)r_{k,i}$, while
all later displacements total at most $\beta r_{k,i}$. Subtract.\qed
\end{proof}

Figure~\ref{fig:scheme} illustrates the travel-budget obstruction in one coordinate.
\begin{figure}[htbp]
\centering\includegraphics[width=\textwidth]{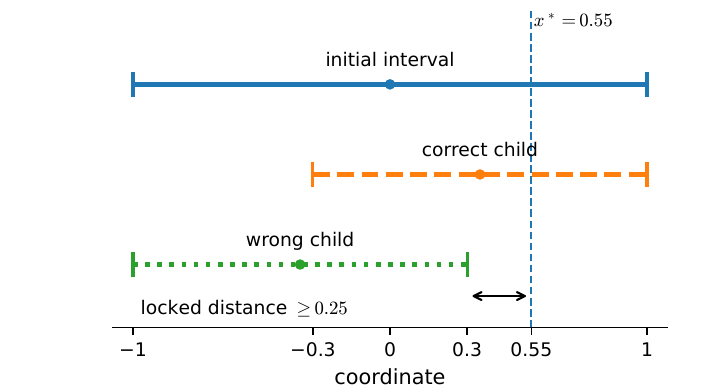}
\caption{One coordinate of the retention test, with $r=1$, $c=0$ and
$\beta=0.65$. The right child contains the target $x^*=0.55$; the wrong left
child ends at $2\beta-1=0.30$. Every subsequent box lies in that child, so the
limiting error is at least $0.55-0.30=0.25$. The central overlap, not convexity
alone, is the available protection against an uncertain sign.}\label{fig:scheme}\label{fig:locking}
\end{figure}

For repeated errors the following accounting identity is useful.
\begin{lemma}[coordinatewise error account]\label{lem:master}
Let $d_{k,i}=[|e_{k,i}|-r_{k,i}]_+$. At a correct step, or when $e_{k,i}=0$,
$d_{k+1,i}\le d_{k,i}$. At a wrong step,
\[
 d_{k+1,i}=[|e_{k,i}|-(2\beta-1)r_{k,i}]_+,\qquad
 d_{k+1,i}-d_{k,i}\le\min\{2(1-\beta)r_{k,i},|e_{k,i}|\}.
\]
Consequently, for $W_i$ the set of wrong steps and $x^*\in B_0$,
\[
 \lim_k|e_{k,i}|\le\sum_{k\in W_i}
       \min\{2(1-\beta)r_{k,i},|e_{k,i}|\}.
\]
\end{lemma}
The proof is in Appendix~\ref{app:exact}. The distinction between this
coordinatewise bound and a bound on the maximum over coordinates matters for
noisy voting.

\begin{proposition}[objective invariance and coordinate equivariance]
\label{lem:invariance}\label{rem:equivariance}
A differentiable transform $\varphi\circ f$ with $\varphi'>0$ leaves the sign
trajectory unchanged under the same tie rule. Under a change
$x=a+DPy$, with $D$ positive diagonal and $P$ a signed permutation, the box
trajectory is transformed correspondingly, provided the initial box,
aspect and \emph{tie rule} are transformed together.
\end{proposition}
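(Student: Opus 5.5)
The plan is an induction on $k$ along the recursion \eqref{eq:err-rec} of Lemma~\ref{lem:nested}. The algorithm sees $f$ only through the sign vectors $\sgn\nabla f(c_k)$ and the zero-resolution rule. It therefore suffices to show that at each step the transformed problem produces the same query answers, suitably transformed, and the same resolved signs.

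\emph{Objective invariance.} By the chain rule, $\nabla(\varphi\circ f)(c)=\varphi'(f(c))\nabla f(c)$. Since $\varphi'(f(c))>0$, we have $\sgn\partial_i(\varphi\circ f)(c)=\sgn\partial_if(c)$ for every $i$, zeros included. Assume inductively that the centres $c_0,\dots,c_k$ and the resolved signs $s_0,\dots,s_{k-1}$ agree for $f$ and for $\varphi\circ f$. The query at $c_k$ then returns the same vector. The tie rule depends only on query points, sign answers and previous choices, so it returns the same $s_k$. Then \eqref{eq:err-rec} gives the same $c_{k+1}$, and the half-sizes $R_kw$ do not depend on $f$ at all.

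\emph{Coordinate equivariance.} Set $g(y)=f(a+DPy)$, where $P$ is a signed permutation, say $Pe_j=\sigma_je_{\pi(j)}$ with $\sigma_j=\pm1$. Then $\nabla g(y)=P^\top D\nabla f(x)$, with $P^\top D$ written as in the chain rule and $D$ symmetric. The key identity is
\[
 \sgn\partial_jg(y)=\sigma_j\,\sgn\partial_{\pi(j)}f(x),
\]
which holds because $D$ has positive diagonal entries: the positive factor $D_{\pi(j)\pi(j)}$ drops out. The correspondences are therefore as follows.
\begin{itemize}
\item The initial box: $c_0^y=P^\top D^{-1}(c_0-a)$.
\item The aspect: $w^y_j=w_{\pi(j)}/D_{\pi(j)\pi(j)}$. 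This is positive because the signs $\sigma_j$ are absorbed into the absolute values defining $\Bx$.
\item The tie rule: a resolved sign $s$ in $x$-coordinates corresponds to $P^\top s$ in $y$-coordinates. The transformed rule is defined by conjugating the original rule with this map and with the map on query points.
\end{itemize}
Induction then shows that $c_k=a+DPc_k^y$ and $s_k=Ps_k^y$. The step is checked by substituting into $c_{k+1}^y=c_k^y-(1-\beta)R_k\,w^y\odot s_k^y$ and verifying componentwise that
\[
 DP(w^y\odot P^\top s)=w\odot s .
\]
Component $\pi(j)$ of the left side is $D_{\pi(j)\pi(j)}\,\sigma_j\,(w_{\pi(j)}/D_{\pi(j)\pi(j)})\,\sigma_js_{\pi(j)}=w_{\pi(j)}s_{\pi(j)}$, using $\sigma_j^2=1$. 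The boxes correspond in the same way: $x\in\Bx(c,R w)$ if and only if $y\in\Bx(c^y,Rw^y)$.

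\emph{Main obstacle.} This is bookkeeping rather than analysis: fixing the convention for $P$ and making sure that the sign flips $\sigma_j$ appear consistently in the gradient, the step and the tie rule. Using the same $\sigma_j$ in the gradient identity and in the map $s\mapsto P^\top s$ makes them cancel. The clause ``tie rule transformed together'' is needed only because a zero component can be resolved arbitrarily. If the tie rule were not conjugated, for instance a rule that always resolves zeros to $+1$, then a flip $\sigma_j=-1$ would break equivariance exactly at ties. This is why the statement requires it.
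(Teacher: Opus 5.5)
Your proposal is correct and follows the paper's argument: the paper proves the proposition by the chain rule alone. Your induction on \eqref{eq:err-rec} supplies the bookkeeping that the paper leaves implicit: the identity $\sgn\partial_jg(y)=\sigma_j\sgn\partial_{\pi(j)}f(x)$, the transformed aspect $w^y_j=w_{\pi(j)}/D_{\pi(j)\pi(j)}$, the conjugated tie rule, and the check $DP(w^y\odot P^\top s)=w\odot s$. Your remark that an unconjugated rule such as ``resolve zeros to $+1$'' breaks equivariance under a reflection matches the paper's own example, $f(t)=t^2$ at $c_0=0$.
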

The chain rule proves the assertion. An arbitrary coordinate-dependent
history rule need not respect even translations or dilations unless it is
conjugated with the coordinate change. For example, the fixed rule ``choose
$+1$ at zero'' is not reflection-equivariant for $f(t)=t^2$ at $c_0=0$.
All invariances here concern exact arithmetic; underflow can turn a nonzero
scaled derivative into a spurious zero.

\subsection{The exact sign-consistent class}
\begin{definition}\label{def:csc}
For $x^*\in B$, a differentiable $f$ belongs to $\CSC(x^*,B)$ when
\begin{equation}\label{eq:csc}
 \partial_if(x)(x_i-x_i^*)>0
 \quad\text{for every }x\in B\text{ and every }i\text{ with }x_i\ne x_i^*.
\end{equation}
Write $\CSC^\circ$ for the non-strict condition with $\ge0$.
\end{definition}

Every coordinate section of $f\in\CSC$ decreases strictly up to its fixed
target coordinate and increases strictly afterwards. Successively replacing
the coordinates of any $x\ne x^*$ by those of $x^*$ proves that $x^*$ is the
unique minimiser on $B$ (Proposition~\ref{prop:csc-unimodal}). Neither convexity
nor a lower bound on derivative magnitudes is required. Examples include
$\sum_i|x_i-x_i^*|^p$ for $p>1$ and the nonconvex
$\sum_i(1-e^{-(x_i-x_i^*)^2})$; smooth transforms with strictly positive derivative preserve the class.
Positive weighted sums and smooth compositions with strictly positive partial
derivatives give further examples
(Proposition~\ref{prop:algebra}).

\begin{theorem}[exact localisation]\label{th:csc}
If $f\in\CSC(x^*,B_0)$, then for any $1/2\le\beta<1$ and every tie rule,
\[
 x^*\in\Bx(c_k,r_k),\qquad M_k\le R_k=\beta^kr_0.
\]
Thus $K=\lceil\log(r_0/\eps)/\log(1/\beta)\rceil$ steps suffice for
$M_K\le\eps$ when $0<\eps<r_0$.
\end{theorem}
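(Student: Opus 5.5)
We argue by induction on $k$, proving the invariant $x^*\in\Bx(c_k,r_k)$, equivalently $|e_{k,i}|\le r_{k,i}$ for every $i$. The base case $k=0$ is the standing assumption $x^*\in B_0$. The bound $M_k\le R_k$ is then a restatement of the invariant, since $|u_{k,i}|=|e_{k,i}|/w_i\le R_k$ for all $i$. The iteration count follows from $\beta^K r_0\le\eps$, which is exactly the condition $K\ge\log(r_0/\eps)/\log(1/\beta)$.

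For the inductive step, assume $x^*\in\Bx(c_k,r_k)$ and fix a coordinate $i$. By Lemma~\ref{lem:nested}, $e_{k+1,i}=e_{k,i}+(1-\beta)r_{k,i}s_{k,i}$ and $r_{k+1,i}=\beta r_{k,i}$. We distinguish three cases.

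\emph{Case $e_{k,i}=0$.} Here the sign answer may be zero and the tie rule may pick either value. Whatever $s_{k,i}$ is, we get $|e_{k+1,i}|=(1-\beta)r_{k,i}\le\beta r_{k,i}$, using $\beta\ge1/2$. This is the only place where the lower bound on $\beta$ enters, and it is also why the statement holds for every tie rule.

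\emph{Case $e_{k,i}\ne0$.} Then $c_{k,i}\ne x_i^*$, and $c_k\in\Bx(c_k,r_k)\subseteq B_0$ by nestedness, so~\eqref{eq:csc} applies at $x=c_k$. It gives $\partial_if(c_k)\ne0$ with sign equal to $\sgn(c_{k,i}-x_i^*)=-\sgn e_{k,i}$. So no tie occurs and $s_{k,i}=-\sgn e_{k,i}$, meaning the step is never wrong in the sense of Lemma~\ref{lem:lock}. Consequently $|e_{k+1,i}|=\bigl||e_{k,i}|-(1-\beta)r_{k,i}\bigr|$. With $|e_{k,i}|\in(0,r_{k,i}]$, this quantity lies in $[0,\max\{\beta,1-\beta\}r_{k,i}]=[0,\beta r_{k,i}]$. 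The upper endpoint is attained when $|e_{k,i}|=r_{k,i}$, the lower extreme of the range corresponds to $|e_{k,i}|\to0$, and the absolute value is at most $(1-\beta)r_{k,i}\le\beta r_{k,i}$ in that regime.

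Combining the cases over all coordinates gives $x^*\in\Bx(c_{k+1},r_{k+1})$, which closes the induction.

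The only subtle point is confirming that $c_k\in B_0$, so that the class condition can be used at the query point. This follows from Lemma~\ref{lem:nested}, since every centre lies in its own box and the boxes are nested inside $B_0$. The rest is the elementary interval estimate above, so we expect no genuine obstacle. The one care point is that the bound $\max\{\beta,1-\beta\}=\beta$ relies on $\beta\ge1/2$.
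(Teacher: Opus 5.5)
Your proof is correct and is essentially the paper's argument: the same coordinatewise induction on $|e_{k,i}|\le r_{k,i}$, with the tie case handled by $(1-\beta)r_{k,i}\le\beta r_{k,i}$ and the nonzero-error case by the forced correct sign and $\bigl||e_{k,i}|-(1-\beta)r_{k,i}\bigr|\le\max\{\beta,1-\beta\}r_{k,i}$. Your explicit check that $c_k\in B_0$ by nestedness is a detail the paper leaves implicit; one small slip is calling the tie case the only use of $\beta\ge1/2$, since the overshoot regime $|e_{k,i}|<(1-\beta)r_{k,i}$ of the correct-sign case also needs it, as you note at the end.
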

\begin{proof}
If $|e_{k,i}|\le r_{k,i}$ and $e_{k,i}\ne0$, the sign is correct and
\[
 |e_{k+1,i}|=\bigl||e_{k,i}|-(1-\beta)r_{k,i}\bigr|
 \le\max\{\beta,1-\beta\}r_{k,i}=\beta r_{k,i}.
\]
For $e_{k,i}=0$ either tie gives $(1-\beta)r_{k,i}\le\beta r_{k,i}$.
Induction gives retention and the bound.\qed
\end{proof}

Strict section monotonicity is \emph{not} enough for a derivative-sign oracle:
it permits a zero derivative away from the target. For example,
$F(t)=g(1-t)$ with $g(s)=s^4/4-2s^3/3+s^2/2$ satisfies
$F'(t)=-(1-t)t^2$ and has its unique minimum at $1$. Starting at $0$ with
$r_0=1$, a positive tie sends the centre to $-1/2$, then to $-1/4,-1/8,\dots$.
The limit is $0$, not $1$ (Example~\ref{ex:tie}).

For a local domain, only centres of admissible starting boxes need testing:
\begin{equation}\label{eq:Cw}
 C_w(B,x^*)=\{c:\exists r>0,\ x^*\in\Bx(c,rw)\subseteq B\}.
\end{equation}
\begin{theorem}[necessary and sufficient condition]\label{th:exact}
Let $f$ be differentiable on a neighbourhood of $B$, with fixed $w>0$ and
$\beta=1/2$. Convergence to $x^*$ from every admissible initial box, for every
tie rule, holds if and only if~\eqref{eq:csc} holds at every
$c\in C_w(B,x^*)$. In that case $M_k\le2^{-k}r_0$.
For $B=\Rn$, the condition is exactly $f\in\CSC(x^*,\Rn)$.
\end{theorem}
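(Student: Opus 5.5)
The plan is to prove the two directions separately. Sufficiency reduces to the argument of Theorem~\ref{th:csc}, and necessity uses the locking Lemma~\ref{lem:lock} at $\beta=1/2$. Throughout, an \emph{admissible initial box} means $\Bx(c_0,r_0w)$ with $x^*\in\Bx(c_0,r_0w)\subseteq B$, so that $c_0\in C_w(B,x^*)$ by~\eqref{eq:Cw}.

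For sufficiency, assume~\eqref{eq:csc} holds at every point of $C_w(B,x^*)$. Fix an admissible initial box and any tie rule. I would prove by induction on $k$ the joint statement
\[
 x^*\in\Bx(c_k,r_k)\subseteq B .
\]
The base case is admissibility. For the inductive step, the key observation is that $r_k=R_kw$ has the same aspect $w$. Hence the inductive hypothesis says exactly that $c_k\in C_w(B,x^*)$, with radius $R_k$. So~\eqref{eq:csc} holds at $c_k$: every coordinate with $e_{k,i}\ne0$ receives the correct sign. The one-step computation in the proof of Theorem~\ref{th:csc} then gives $|e_{k+1,i}|\le\beta r_{k,i}=r_{k+1,i}$ for $\beta=1/2$, and ties are harmless. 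Nestedness from Lemma~\ref{lem:nested} gives $\Bx(c_{k+1},r_{k+1})\subseteq\Bx(c_k,r_k)\subseteq B$, which closes the induction. We then get $M_k\le 2^{-k}r_0$, and hence convergence to $x^*$.

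This is the step I expect to need the most care. Theorem~\ref{th:csc} assumed the sign condition on the whole of $B_0$, whereas here it is only available on $C_w$. The induction must therefore carry the containment in $B$ together with the retention of $x^*$, so that every queried centre is certified to lie in $C_w$.

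For necessity, suppose~\eqref{eq:csc} fails at some $c\in C_w(B,x^*)$. Then there is a coordinate $i$ with $c_i\ne x_i^*$ and $\partial_if(c)(c_i-x_i^*)\le0$. Choose $r>0$ with $x^*\in\Bx(c,rw)\subseteq B$, and start the algorithm from $c_0=c$, $r_0=r$. There are two cases.
\begin{itemize}
\item If $\partial_if(c)\ne0$, the oracle itself forces $s_{0,i}=\sgn(x_i^*-c_i)=\sgn e_{0,i}$.
\item If $\partial_if(c)=0$, we choose the tie rule that, at this single first query point, resolves coordinate $i$ to $\sgn(x_i^*-c_i)$. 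All other zeros, there and later, are resolved arbitrarily. This is a legitimate rule, since it depends only on the query point.
\end{itemize}
In either case step $0$ is wrong in coordinate $i$. Lemma~\ref{lem:lock} with $2\beta-1=0$ then gives $\lim_m|e_{m,i}|\ge|e_{0,i}|>0$, so the centres do not converge to $x^*$.

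Finally, for $B=\Rn$, every $c\in\Rn$ lies in $C_w(\Rn,x^*)$: take $r\ge\ninfw{c-x^*}$. So the condition in the statement reads~\eqref{eq:csc} on all of $\Rn$, that is, $f\in\CSC(x^*,\Rn)$.
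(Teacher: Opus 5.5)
Your proposal is correct and follows the paper's own proof essentially step for step. For sufficiency, you carry the invariant $x^*\in\Bx(c_k,r_k)\subseteq B$, so that every queried centre lies in $C_w(B,x^*)$ with radius $R_k$. For necessity, you start the method at a violating centre, where either the oracle forces a wrong sign or an admissible tie rule chooses one, and then apply Lemma~\ref{lem:lock} with $2\beta-1=0$.
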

Sufficiency repeats the retention argument at reachable centres. Necessity
follows from the locking lemma applied to a violating centre used as the first
query. The complete proof and a local-domain example are in
Appendix~\ref{app:exact}.

\begin{corollary}[residual estimates]\label{cor:csc-f}
If additionally $\nabla f(x^*)=0$ and $\nabla f$ is $L$-Lipschitz on $B_0$,
then
\[
 \|\nabla f(c_k)\|_2\le L\|w\|_2R_k,\qquad
 0\le f(c_k)-f(x^*)\le\tfrac L2\|w\|_2^2R_k^2.
\]
\end{corollary}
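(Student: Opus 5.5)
The plan is to combine the localisation guarantee of Theorem~\ref{th:csc} with the two standard consequences of an $L$-Lipschitz gradient: a bound on $\nabla f$ and the descent lemma. Throughout I take $\|\cdot\|_2$-Lipschitz continuity on $B_0$, meaning $\|\nabla f(x)-\nabla f(y)\|_2\le L\|x-y\|_2$ for $x,y\in B_0$.

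\emph{Step 1.} By Theorem~\ref{th:csc}, for every $k$ and every tie rule we have $x^*\in\Bx(c_k,r_k)$, that is, $|c_{k,i}-x_i^*|\le R_kw_i$ for each $i$. Lemma~\ref{lem:nested} shows the boxes are nested in $B_0$, so $c_k\in B_0$. The segment $[x^*,c_k]$ then lies in the convex set $B_0$, and summing the coordinate bounds in the Euclidean norm gives
\[
 \|c_k-x^*\|_2\le\Bigl(\sum_i R_k^2w_i^2\Bigr)^{1/2}=\|w\|_2R_k .
\]

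\emph{Step 2.} Since $\nabla f(x^*)=0$, the gradient bound is immediate:
\[
 \|\nabla f(c_k)\|_2=\|\nabla f(c_k)-\nabla f(x^*)\|_2\le L\|c_k-x^*\|_2\le L\|w\|_2R_k .
\]

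\emph{Step 3.} For the objective gap, apply the descent lemma along the segment, which is valid because the segment lies in $B_0$ where the Lipschitz bound holds. Writing $f(c_k)-f(x^*)=\int_0^1\ip{\nabla f(x^*+t(c_k-x^*))-\nabla f(x^*)}{c_k-x^*}\,dt$, we get
\[
 f(c_k)-f(x^*)\le\int_0^1 Lt\|c_k-x^*\|_2^2\,dt=\tfrac L2\|c_k-x^*\|_2^2\le\tfrac L2\|w\|_2^2R_k^2 .
\]
The lower bound $0\le f(c_k)-f(x^*)$ follows because $x^*$ is the unique minimiser of $f$ on $B$ for $f\in\CSC(x^*,B_0)$ (Proposition~\ref{prop:csc-unimodal}), and $c_k\in B_0$.

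No step is a real obstacle. The only points that need care are that $c_k$ and the segment to $x^*$ stay inside $B_0$, where the Lipschitz hypothesis holds, and that the weighted sup-norm bound converts to the Euclidean factor $\|w\|_2$.
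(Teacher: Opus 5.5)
Your proof is correct and follows the paper's argument: convert the weighted localisation bound of Theorem~\ref{th:csc} into $\|c_k-x^*\|_2\le\|w\|_2R_k$, then apply stationarity with the Lipschitz bound and the descent lemma based at $x^*$, with nonnegativity coming from minimality of $x^*$ on $B_0$. Your additional check that $c_k$ and the segment $[x^*,c_k]$ remain in $B_0$, where the Lipschitz hypothesis holds, is a detail the paper leaves implicit.
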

Stationarity is essential here. For $f(t)=t+t^2/2$ on $[0,2]$, the boundary
minimiser is $0$; the centres $2^{-k}$ have an objective gap of order $2^{-k}$,
not $4^{-k}$, and the derivative tends to $1$.

\begin{theorem}[subgradients]\label{th:subgrad}
Let $f$ be convex and finite on a neighbourhood of $B_0$. If
\begin{equation}\label{eq:subcsc}
 g_i(x_i-x_i^*)>0\quad
 \text{for every }g\in\partial f(x),\ x\in B_0,\ x_i\ne x_i^*,
\end{equation}
then the same localisation bound holds for every subgradient selection and
every tie rule.
\end{theorem}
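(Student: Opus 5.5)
The argument repeats the retention induction of Theorem~\ref{th:csc}, with the gradient sign replaced by the sign of the selected subgradient. We take a subgradient selection $g_k\in\partial f(c_k)$ and set $s_{k,i}=\sgn g_{k,i}$ whenever $g_{k,i}\ne0$; zeros are resolved by an arbitrary tie rule. The proof of Theorem~\ref{th:csc} uses only two facts at each reachable centre: the recursion~\eqref{eq:err-rec}, which holds for every sign sequence by Lemma~\ref{lem:nested}, and the inequality $s_{k,i}=-\sgn e_{k,i}$ whenever $e_{k,i}\ne0$ and $c_k\in B_0$. So the plan is to verify the second fact from~\eqref{eq:subcsc} and then copy the induction.

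First, we check that every centre lies in $B_0$. By Lemma~\ref{lem:nested} the boxes are nested, so $c_k\in\Bx(c_k,r_k)\subseteq B_0$. Convexity and finiteness of $f$ on a neighbourhood of $B_0$ guarantee that $\partial f(c_k)$ is nonempty, so a selection exists.

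Second, suppose $e_{k,i}\ne0$, that is, $c_{k,i}\ne x_i^*$. Condition~\eqref{eq:subcsc}, applied with $x=c_k$ and $g=g_k$, gives $g_{k,i}(c_{k,i}-x^*_i)>0$. Hence $g_{k,i}\ne0$, no tie arises in that coordinate, and $s_{k,i}=\sgn(c_{k,i}-x_i^*)=-\sgn e_{k,i}$. Because this holds for \emph{every} element of $\partial f(c_k)$, the conclusion is independent of the selection. This is exactly the point where the statement requires all subgradients rather than a single one.

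Third, we run the induction. Assume $|e_{k,i}|\le r_{k,i}$. If $e_{k,i}\ne0$, the step is correct, so
\[
|e_{k+1,i}|=\bigl||e_{k,i}|-(1-\beta)r_{k,i}\bigr|\le\max\{\beta,1-\beta\}r_{k,i}=\beta r_{k,i}=r_{k+1,i},
\]
where the inequality uses $|e_{k,i}|\le r_{k,i}$. If $e_{k,i}=0$, either sign gives $|e_{k+1,i}|=(1-\beta)r_{k,i}\le\beta r_{k,i}$, since $\beta\ge1/2$. The base case $x^*\in B_0$ holds by assumption. Induction therefore gives $x^*\in\Bx(c_k,r_k)$ and $M_k\le R_k$, and the step count $K=\lceil\log(r_0/\eps)/\log(1/\beta)\rceil$ follows as in Theorem~\ref{th:csc}.

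None of these steps is a genuine obstacle. The only care needed is the observation that~\eqref{eq:subcsc} excludes zero subgradient components off the target coordinate, so the tie rule acts only where $e_{k,i}=0$. Convexity serves only to make $\partial f$ well defined and nonempty; it is not used anywhere in the induction itself.
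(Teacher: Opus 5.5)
Your proposal is correct and follows the paper's proof: the retention induction of Theorem~\ref{th:csc} is carried over verbatim, with condition~\eqref{eq:subcsc} forcing a correct, non-degenerate sign whenever $c_{k,i}\ne x_i^*$, for every subgradient selection, and with either direction being safe when $c_{k,i}=x_i^*$. Your explicit checks that the centres stay in $B_0$ by nestedness and that $\partial f(c_k)\neq\emptyset$ are details the paper leaves implicit.
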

Only coordinate signs enter the proof. The condition includes
$\|x-x^*\|_1$, but not $\|x-x^*\|_\infty$: an inactive coordinate may have
zero subgradient despite nonzero error.

\subsection{Optimality in the sign-vector model}\label{sec:lower}
\begin{theorem}[sign-query lower bound]\label{th:lower}
For any deterministic algorithm using $k$ adaptive full sign-vector queries,
the worst-case supremum of its weighted output error is at least $2^{-k}r_0$,
even over translated spherical quadratics with target in $\Bx(c_0,r_0w)$.
Hence halving on $\CSC$ is minimax optimal, including the constant.
\end{theorem}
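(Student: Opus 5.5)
We argue by an adversary that answers every query consistently with a shrinking set of candidate targets. Take the translated spherical quadratics $f_{x^*}(x)=\tfrac12\sum_i(x_i-x_i^*)^2/w_i^2$, or the unweighted version $\tfrac12\|x-x^*\|_2^2$; the signs are the same. A query at $c$ returns $\sgn(c_i-x_i^*)$ in each coordinate. So the answer depends only on where each target coordinate lies relative to $c_i$, with a zero when $x_i^*=c_i$. Since the vector answer is a product of coordinatewise answers, it is enough to run a one-dimensional adversary in a single coordinate, say $i=1$, and fix the other target coordinates at $c_{0,i}$. Their contribution to the weighted error is irrelevant for a lower bound, because we only need the supremum to be large in one coordinate.

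In coordinate $1$ the adversary maintains an interval $I_j=[a_j,b_j]$ of candidate values $x_1^*$, starting from $I_0=[c_{0,1}-r_0w_1,\,c_{0,1}+r_0w_1]$, of length $2r_0w_1$. Suppose query $j$ is at point $q$.
\begin{itemize}
\item If $q\notin(a_j,b_j)$, the answer is already determined for all interior candidates. The adversary gives that answer and keeps $I_{j+1}=I_j$; at an endpoint it keeps the interval minus one point, which does not change its closure or length.
\item If $q\in(a_j,b_j)$, the adversary picks the longer of $[a_j,q]$ and $[q,b_j]$ and answers the strict sign consistent with candidates in its interior.
\end{itemize}
Since $\ninfw{\cdot}$ is a maximum, the other coordinates never matter, and the adversary answers $0$ in each of them (consistent with $x_i^*=c_{0,i}$). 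In every case the length is at least halved, and the answers are consistent with every target in the interior of $I_{j+1}$.

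After $k$ queries, the length of $I_k$ is at least $2^{1-k}r_0w_1$, and every interior point of $I_k$ is consistent with the entire transcript. The algorithm is deterministic, so its output $\hat x$ is a fixed function of the transcript and hence is the same for all these targets. Choosing $x_1^*$ near whichever endpoint of $I_k$ is farther from $\hat x_1$ gives $|\hat x_1-x_1^*|/w_1\ge \tfrac12|I_k|/w_1-\delta=2^{-k}r_0-\delta$ for arbitrary $\delta>0$. Taking the supremum gives the bound $2^{-k}r_0$. Each such target lies in $\Bx(c_0,r_0w)$, and each $f_{x^*}$ belongs to $\CSC(x^*,\Rn)$. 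Comparing with Theorem~\ref{th:csc} at $\beta=1/2$, which gives $M_k\le 2^{-k}r_0$, yields minimax optimality with the exact constant.

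The step that needs the most care is handling zeros and boundary queries so that the answers stay consistent with a nondegenerate set of targets. A zero answer would pin $x_1^*=q$ exactly, so the adversary must never give one in coordinate $1$. This is why it works with open interiors, and why the bound is stated as a supremum rather than a maximum. The tie-resolution rule plays no role, since the adversary only returns strict signs in coordinate $1$.
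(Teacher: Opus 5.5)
Your proposal is essentially the paper's proof. Both use one uncertain normalised target coordinate kept in an open interval, retain the longer side with a strict sign (keeping the whole interval when the query falls outside it), and fix the other target coordinates. Both take the farther endpoint of $I_k$ to get error at least $|I_k|/(2w_1)\ge 2^{-k}r_0$ in the supremum, and both compare with Theorem~\ref{th:csc} at $\beta=1/2$ for minimax optimality with the constant.

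One statement must be corrected. In coordinates $i\ge2$ the adversary cannot simply answer $0$. Any query $q$ with $q_i\ne c_{0,i}$ must receive $\sgn(q_i-c_{0,i})$ to be consistent with the fixed value $x_i^*=c_{0,i}$; halving itself queries such points from $c_1$ onwards. The adversary should return these true signs of the fixed coordinates, as the paper does. The transcript is then consistent with every target in the interior of $I_k$, and the rest of your argument goes through unchanged.
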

A single unknown target coordinate suffices: an adversary retains a half of
the feasible interval after each query. Its length remains at least
$2r_0/2^k$, and no output is closer than half that length to all feasible
targets. Equality answers do not help in the supremum, since they can be
avoided. The randomized expected-error lower bound is $r_0/2^{k+1}$, obtained
from a uniform prior in one target coordinate and fixed values in the others
(Appendix~\ref{app:exact}). This is not a lower bound for full gradients: for
a known identity Hessian, one gradient query recovers the target exactly.

\section{A sign defect and its sharp envelope}\label{sec:robust}

A wrong sign is expensive only in a coordinate that is still far from the
target. This observation turns a qualitative sign condition into a quantitative
measure of uncertainty.
\begin{definition}[defect class]\label{def:msc}
For $x^*\in B$ and $0\le\theta<1$, write $f\in\MSC_w(\theta,x^*,B)$ if $f$ is differentiable
on a neighbourhood of $B$ and
\begin{equation}\label{eq:msc}
 \frac{|x_i-x_i^*|}{w_i}>\theta\ninfw{x-x^*}
 \quad\Longrightarrow\quad \partial_if(x)(x_i-x_i^*)>0
 \qquad(x\in B).
\end{equation}
Inside the indicated band the derivative may have either sign or vanish.
\end{definition}
Thus $\MSC_w(0)=\CSC$, and the classes increase with $\theta$.
The condition is not merely a root-finding hypothesis.
\begin{proposition}[unique minimiser]\label{prop:clipped-path}
Every $f\in\MSC_w(\theta,x^*,B)$ with $\theta<1$ has the unique minimiser
$x^*$ on the box $B$.
\end{proposition}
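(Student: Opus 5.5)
The plan is to join an arbitrary $x\in B$, $x\neq x^*$, to $x^*$ by a \emph{clipped path}. Along this path only coordinates whose weighted error equals the current maximum move, so by~\eqref{eq:msc} their partial derivatives have the correct sign. Set $M=\ninfw{x-x^*}>0$ and, for $t\in[0,M]$,
\[
 y_i(t)=x_i^*+\sgn(x_i-x_i^*)\,\min\{|x_i-x_i^*|,\,t\,w_i\},\qquad i=1,\dots,n.
\]
Then $y(M)=x$ and $y(0)=x^*$. Each $y_i(t)$ lies between $x_i^*$ and $x_i$, so $y(t)\in B$ because $B$ is a box containing both points. Moreover $\ninfw{y(t)-x^*}=t$, since the maximising coordinates of $x$ are clipped to exactly $t w_i$ and no coordinate exceeds it.

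Next I would split $[0,M]$ at the finitely many breakpoints $\tau_i=|x_i-x_i^*|/w_i$. On each open subinterval between consecutive breakpoints, the active set $A=\{i:\tau_i>t\}$ is constant and nonempty, because it contains the maximising coordinates. The path is affine there, with $y_i'(t)=w_i\sgn(x_i-x_i^*)$ for $i\in A$ and $y_i'(t)=0$ otherwise. Since $f$ is differentiable on a neighbourhood of $B$, the chain rule gives
\[
 \frac{d}{dt}f(y(t))=\sum_{i\in A}w_i\,\sgn(x_i-x_i^*)\,\partial_if(y(t))
 =\sum_{i\in A}\frac{w_i}{|y_i(t)-x_i^*|}\,\partial_if(y(t))\,(y_i(t)-x_i^*).
\]
For $i\in A$ we have $|y_i(t)-x_i^*|/w_i=t=\ninfw{y(t)-x^*}>\theta\,\ninfw{y(t)-x^*}$, which uses $\theta<1$ and $t>0$. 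Hence~\eqref{eq:msc} makes every summand strictly positive, and the derivative is positive on each subinterval.

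By the mean value theorem on each subinterval, and continuity of $t\mapsto f(y(t))$ at the breakpoints, $f(y(t))$ is strictly increasing on $[0,M]$. Therefore $f(x)=f(y(M))>f(y(0))=f(x^*)$. As $x\ne x^*$ was arbitrary, $x^*$ is the unique minimiser on $B$.

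The only delicate point, and the one I expect to need care, is ensuring that the coordinates moving at time $t$ are always in the region where~\eqref{eq:msc} applies. This is exactly why the path moves only the currently maximal coordinates rather than following the straight segment or replacing one coordinate at a time as in the $\CSC$ argument. A straight segment could move a coordinate lying inside the uncertainty band, where the sign of $\partial_if$ is unconstrained. Clipping keeps each moving coordinate at weighted error equal to the current maximum $t$, and strictness of $\theta<1$ then gives the required sign.
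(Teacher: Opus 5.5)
Your proposal is correct and follows the paper's own argument essentially verbatim. It uses the same clipped path, the same splitting at the breakpoints $|x_i-x_i^*|/w_i$ with a nonempty moving set whose members all have normalised error $t=\ninfw{y(t)-x^*}>\theta t$, and the same combination of the mean value theorem on each piece with continuity at the breakpoints to obtain $f(x)>f(x^*)$.
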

To see the idea, travel from $x^*$ to any $x\ne x^*$ along
\[
 \gamma_i(t)=x_i^*+\sgn(x_i-x_i^*)\min\{|x_i-x_i^*|,tw_i\},
 \quad 0\le t\le\ninfw{x-x^*}.
\]
Every moving coordinate has the largest normalised error $t$, so each of its
contributions to $d f(\gamma(t))/dt$ is positive. This piecewise smooth path
stays in $B$ and strictly increases $f$. Appendix~\ref{app:envelope} gives the
details; convexity is not used.

Figure~\ref{fig:clipped-path} shows why the path is useful: once a coordinate
has reached its endpoint, it stops; every coordinate that still moves remains
outside the uncertain-sign band.
\begin{figure}[htbp]
\centering
\includegraphics[width=\textwidth]{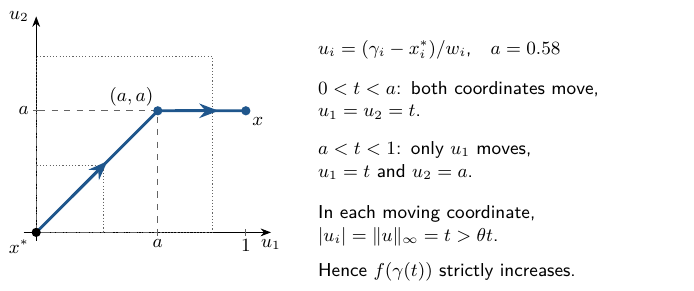}
\caption{The clipped path used to prove uniqueness in Proposition~\ref{prop:clipped-path},
shown in normalised coordinates with positive endpoint components. Dashed and
 dotted squares are maximum-norm level sets. A coordinate stops as soon as it
reaches its endpoint; all remaining moving coordinates have the current largest
normalised error. This is a proof path, not a sequence of optimisation iterates}
\label{fig:clipped-path}
\end{figure}
\FloatBarrier

\subsection{The scalar worst case}\label{sec:envelope}
\begin{proposition}[scalar envelope]\label{prop:envelope}
Let $1/2\le\beta<1$, $0\le\theta<1$, $x^*\in B_0$ and
$f\in\MSC_w(\theta,x^*,B_0)$. Define
\begin{equation}\label{eq:envelope}
 b_0=r_0,\qquad
 b_{k+1}=\max\{b_k-(1-\beta)R_k,\ \theta b_k+(1-\beta)R_k\}.
\end{equation}
Then $M_k\le b_k$ for every $k$. If $\theta\le2\beta-1$, then $b_k=R_k$.
Otherwise the index
\[
 J=\min\{k:b_k/R_k\ge 2(1-\beta)/(1-\theta)\}
\]
is finite, the first branch dominates from $J$ onwards, and
$b_k=b_J-R_J+R_k$ for $k\ge J$. In particular,
$M_\infty\le b_J-R_J$.
\end{proposition}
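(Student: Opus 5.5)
The plan is to prove $M_k\le b_k$ by induction on $k$, with a coordinatewise case analysis at each step. The base case is $M_0\le r_0=b_0$, which holds because $x^*\in B_0$. For the inductive step, fix $k$ and assume $M_k\le b_k$. We want $|u_{k+1,i}|\le b_{k+1}$ for every $i$. By~\eqref{eq:err-rec}, in normalised coordinates $u_{k+1,i}=u_{k,i}-(1-\beta)R_ks_{k,i}$. We split into two cases.

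\emph{Case 1: $|u_{k,i}|>\theta M_k$.} Here~\eqref{eq:msc} forces a correct sign. So
\[
|u_{k+1,i}|=\bigl||u_{k,i}|-(1-\beta)R_k\bigr|\le\max\{|u_{k,i}|-(1-\beta)R_k,\ (1-\beta)R_k-|u_{k,i}|\}.
\]
The first term is at most $b_k-(1-\beta)R_k$. The second is at most $(1-\beta)R_k\le\theta b_k+(1-\beta)R_k$.

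\emph{Case 2: $|u_{k,i}|\le\theta M_k$.} The sign is arbitrary, or resolved from a zero. Then $|u_{k+1,i}|\le\theta M_k+(1-\beta)R_k\le\theta b_k+(1-\beta)R_k$.

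Both cases give $|u_{k+1,i}|\le b_{k+1}$. The recursion is monotone in $b_k$, since both branches are nondecreasing in $b_k$. So the induction needs only the bound $M_k\le b_k$ and never the exact value of $M_k$. The case $e_{k,i}=0$ falls under Case 2.

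For the closed form, write $\rho_k=b_k/R_k$. Dividing~\eqref{eq:envelope} by $R_{k+1}=\beta R_k$ gives
\[
\rho_{k+1}=\max\{(\rho_k-(1-\beta))/\beta,\ (\theta\rho_k+1-\beta)/\beta\}.
\]
\textbf{When $\theta\le 2\beta-1$.} At $\rho=1$ the first branch equals $1$ and the second equals $(\theta+1-\beta)/\beta\le1$. Hence $\rho_k\equiv1$, that is, $b_k=R_k$.

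\textbf{When $\theta>2\beta-1$.} The two branches cross exactly at $\rho^\sharp=2(1-\beta)/(1-\theta)$; the second dominates below $\rho^\sharp$ and the first at or above it. The hypothesis $\theta>2\beta-1$ is equivalent to $\rho^\sharp>1$.
\begin{itemize}
\item While $\rho_k<\rho^\sharp$, we have $\rho_{k+1}=(\theta\rho_k+1-\beta)/\beta$. This affine map has slope $\theta/\beta$ and fixed point $(1-\beta)/(\beta-\theta)$ when $\theta<\beta$.
\item We must show $\rho_k$ eventually reaches $\rho^\sharp$, so that $J$ is finite. The increment is $\rho_{k+1}-\rho_k=\bigl((\theta-\beta)\rho_k+1-\beta\bigr)/\beta$. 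For $\theta\ge\beta$ it is bounded below by $(1-\beta)/\beta>0$. For $\theta<\beta$ we need the fixed point to exceed $\rho^\sharp$. That amounts to $(1-\theta)>2(\beta-\theta)$, i.e.\ $\theta>2\beta-1$, which is exactly our hypothesis. The increment is then bounded below by a positive constant on $[1,\rho^\sharp]$, so $\rho_k$ crosses $\rho^\sharp$ in finitely many steps.
\item Once $\rho_J\ge\rho^\sharp$, the first branch gives $\rho_{J+1}=(\rho_J-(1-\beta))/\beta$. We need $\rho_{J+1}\ge\rho^\sharp$ so that the first branch persists. Equivalently $\rho_J-1\ge\beta(\rho^\sharp-1)$, which follows from $\rho_J\ge\rho^\sharp$ and $\beta<1$. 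By induction the first branch dominates for all $k\ge J$.
\end{itemize}
Then $b_{k+1}-R_{k+1}=b_k-(1-\beta)R_k-\beta R_k=b_k-R_k$, so $b_k-R_k$ is constant for $k\ge J$. This gives the formula $b_k=b_J-R_J+R_k$. Letting $k\to\infty$ and using $M_k\le b_k$ with convergence of the centres (Lemma~\ref{lem:nested}) yields $M_\infty\le b_J-R_J$.

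The main obstacle is the finiteness and persistence argument for $J$ in the regime $\theta>2\beta-1$. The delicate sub-case is $\theta<\beta$, where one must confirm that the affine fixed point lies beyond the switching threshold. The algebraic identity relating these two quantities to $\theta>2\beta-1$ is where I would check signs carefully. The induction for $M_k\le b_k$ itself is routine.
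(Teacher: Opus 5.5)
Your proposal is correct and follows the paper's own argument. The coordinatewise case split yields the scalar estimate $M_{k+1}\le\max\{M_k-h_k,\ \theta M_k+h_k\}$ from the main text. Your analysis of $\rho_k=b_k/R_k$ is the appendix's branch-switch analysis: the crossing point $\rho^\sharp$, the fixed point $(1-\beta)/(\beta-\theta)$ exceeding $\rho^\sharp$ exactly when $\theta>2\beta-1$, persistence of the first branch, and constancy of $b_k-R_k$ all match. The differences are cosmetic: you prove persistence via $\rho_{J+1}\ge\rho^\sharp$ directly rather than via $q_{J+1}\ge q_J$, and you make the finiteness of $J$ explicit with a uniform positive lower bound on the increments.
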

The elementary estimate behind this envelope is
\begin{equation}\label{eq:scalar}
 M_{k+1}\le\max\{M_k-h_k,\ \theta M_k+h_k\},\qquad h_k=(1-\beta)R_k.
\end{equation}
A correct sign decreases a non-overshooting coordinate by $h_k$. An uncertain
coordinate is at most $\theta M_k$ from the target and can increase by at most
$h_k$. Overshoots are covered by the second term, which is at least $h_k$.
Monotonicity in $M_k$ gives the envelope. Its branch-switch analysis is in
Appendix~\ref{app:envelope}.

\begin{theorem}[sharp residual for halving]\label{th:theta}
If $\beta=1/2$ and $0\le\theta\le1/2$, the preceding hypotheses imply
\begin{equation}\label{eq:theta-bound}
 M_k\le2^{-k}r_0+\theta r_0\quad(k\ge1),\qquad M_\infty\le\theta r_0.
\end{equation}
The coefficient of $\theta r_0$ in the limiting bound cannot be reduced,
even for positive definite quadratics in two dimensions.
\end{theorem}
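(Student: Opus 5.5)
The plan is to derive the upper bound from Proposition~\ref{prop:envelope} by a short induction on the envelope, and to prove sharpness with an explicit two-dimensional quadratic that sits exactly on the boundary of the uncertain-sign band.

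\emph{Upper bound.} For $\beta=1/2$ we have $R_k=2^{-k}r_0$ and $h_k=(1-\beta)R_k=R_{k+1}$. The envelope~\eqref{eq:envelope} then reads $b_{k+1}=\max\{b_k-R_{k+1},\ \theta b_k+R_{k+1}\}$. Since $M_k\le b_k$, it suffices to prove $b_k\le R_k+\theta r_0$ for $k\ge1$ by induction.
\begin{itemize}
\item \emph{Base case.} From $b_0=r_0$ we get $b_1=\max\{r_0/2,\ \theta r_0+r_0/2\}=R_1+\theta r_0$.
\item \emph{First branch.} Using the hypothesis, $b_k-R_{k+1}\le R_k-R_{k+1}+\theta r_0=R_{k+1}+\theta r_0$.
\item \emph{Second branch.} We have $\theta b_k+R_{k+1}\le\theta(R_k+\theta r_0)+R_{k+1}$. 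This is at most $R_{k+1}+\theta r_0$ exactly when $R_k+\theta r_0\le r_0$.
\end{itemize}
The last condition holds for $k\ge1$, because $R_k\le r_0/2$ and $\theta\le1/2$. This is precisely where both restrictions $k\ge1$ and $\theta\le1/2$ enter. Letting $k\to\infty$ gives $M_\infty\le\theta r_0$.

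\emph{Sharpness.} Take $w=(1,1)$, $x^*=0$ and $f(x)=\tfrac12x^\top Hx$ with
\[
 H=\begin{pmatrix}1&-\theta\\-\theta&1\end{pmatrix},
\]
which is positive definite since $\theta<1$. We first check that $f\in\MSC_w(\theta,0,\Rn)$. Write $M=\ninfw{x}$. If $|x_1|>\theta M$, then $\partial_1f(x)\,x_1=x_1^2-\theta x_1x_2\ge|x_1|(|x_1|-\theta|x_2|)>0$, because $|x_2|\le M$ and hence $\theta|x_2|\le\theta M<|x_1|$. The second coordinate is symmetric.

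Now start at $c_0=(r_0,\theta r_0)$, so that $x^*\in B_0$. At this point $\partial_2f(c_0)=-\theta r_0+\theta r_0=0$. The guarantee must hold for every tie rule, so we may choose the rule that resolves this zero to $+1$, the wrong direction. Lemma~\ref{lem:lock} with $\beta=1/2$ then gives $\lim_m|e_{m,2}|\ge|e_{0,2}|=\theta r_0$. Hence $M_\infty\ge\theta r_0$, and combined with the upper bound, $M_\infty=\theta r_0$.

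The main obstacle is the sharpness example, specifically reconciling membership in the class with an actual wrong sign. Membership requires $|H_{12}|\le\theta H_{22}$, while a strictly wrong sign at a point with $|x_2|\le\theta M$ would require $|H_{12}|>\theta H_{22}$. The resolution is to sit exactly on the boundary of the band and let the adversarial tie rule produce the wrong step. If one prefers to avoid relying on ties, a fallback is a family $\theta'\uparrow\theta$ with $c_{0,2}$ slightly inside the band, giving a strictly wrong sign and $M_\infty\ge\theta' r_0$. This still shows that no coefficient smaller than $1$ is possible.
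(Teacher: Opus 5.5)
Your upper bound is the paper's argument, run on the envelope $b_k$ of Proposition~\ref{prop:envelope} rather than directly on the recursion $M_{k+1}\le\max\{M_k-R_k/2,\ \theta M_k+R_k/2\}$. The closing fact $R_k+\theta r_0\le r_0$ for $k\ge1$ is used in the same way.

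In the sharpness example there is a sign slip that breaks it as written. Take $c_{0,2}=\theta r_0>0$ and the update $c_{1,2}=c_{0,2}-\tfrac12 r_0s_{0,2}$. The choice $s_{0,2}=+1$ moves \emph{towards} the target: $|c_{1,2}|=(\tfrac12-\theta)r_0\le R_1$, and nothing is locked. A wrong step in the paper's sense has $s_{0,2}=\sgn e_{0,2}=\sgn(x^*_2-c_{0,2})=-1$. Resolve the zero to $-1$ instead, for instance by the fixed rule ``$-1$ at every zero''. Then Lemma~\ref{lem:lock} gives $\lim_m|e_{m,2}|\ge\theta r_0$, and hence $M_\infty=\theta r_0$, as you intend.

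After this correction your sharpness route genuinely differs from the paper's.
\begin{itemize}
\item Proposition~\ref{prop:tight} uses $H_\theta$ with off-diagonal $+\theta$ and $c_0=r_0(\eta,-1)$, $0<\eta<\theta$. Then $\partial_1f(c_0)=r_0(\eta-\theta)$ is \emph{strictly} of the wrong sign. No tie occurs, so the locked error $\eta r_0$ is forced under every tie rule. However, $\theta r_0$ is reached only as a supremum as $\eta\uparrow\theta$.
\item Your point sits exactly on the boundary of the uncertain band. There, as you correctly observe, membership in $\MSC_{\mathbf1}(\theta)$ makes the cross term cancel the diagonal term, so the derivative vanishes. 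An adversarial tie then gives exact attainment, $M_\infty=\theta r_0$, which is slightly more than the statement requires.
\end{itemize}
The price of your route is dependence on the tie rule. Under a rule that resolves that particular zero correctly, your instance locks nothing, whereas the paper's family works for every rule. The upper bound is asserted for all tie rules, so either construction shows that the coefficient cannot be reduced. Your fallback, with $c_{0,2}$ strictly inside the band, is essentially the paper's construction up to reflecting the off-diagonal sign and relabelling the coordinates.
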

Indeed, with
$H_\theta=\left(\begin{smallmatrix}1&\theta\\\theta&1\end{smallmatrix}\right)$
and $c_0=r_0(\eta,-1)$ for $0<\eta<\theta$, the first gradient coordinate is
negative although $c_{0,1}>0$. Locking gives $M_\infty\ge\eta r_0$;
let $\eta\uparrow\theta$ (Proposition~\ref{prop:tight}).
The finite-time coefficient is different: at $\theta=0.4$, $\eta=0.39$, the
first error is $0.89r_0$, not bounded by
$[2^{-1}+\theta(1-2^{-1})]r_0=0.7r_0$.

\subsection{A smooth function realises the whole envelope}
A scalar bound is useful, but sharpness requires that its adversarial signs
come from the gradient of \emph{one fixed function}. Here they do.
\begin{theorem}[attainability]\label{th:attain}
For $1/2\le\beta<1$ and $0<\theta<1$ with $\theta>2\beta-1$, let $J$ be the
switching index in Proposition~\ref{prop:envelope} for $r_0=1$.
There exist $n=J+1$, a convex $C^\infty$ function on $\Rn$ with a globally
Lipschitz gradient and unique minimiser $0$, and a starting centre in
$[-1,1]^n$, such that the function belongs to $\MSC_{\mathbf1}(\theta,0,\Rn)$
and a deterministic tie rule yields
\[
 \|c_k\|_\infty=b_k\quad(k\ge0).
\]
When $\theta\le2\beta-1$, a one-dimensional quadratic attains $b_k=R_k$.
\end{theorem}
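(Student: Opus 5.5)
The plan is to realise the envelope recursion~\eqref{eq:envelope} one branch at a time, with a fresh coordinate carrying each step of the second branch. Set $r_0=1$, $h_k=(1-\beta)R_k$ and $n=J+1$, and index coordinates by $i=0,\dots,J$.

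\textbf{Step 1: the target trajectory.} By Proposition~\ref{prop:envelope}, $b_{k+1}=\theta b_k+h_k$ for $k<J$, and $b_{k+1}=b_k-h_k$ for $k\ge J$. I read this as follows.
\begin{itemize}
\item At step $k<J$, coordinate $k+1$ sits at distance exactly $\theta b_k$ from the target, on the side where a wrong step moves it outward. It takes that wrong step and becomes the new maximal coordinate, with $|c_{k+1,k+1}|=b_{k+1}$.
\item Every coordinate $i\le k$ that was previously maximal moves correctly, so it drops to at most $b_k-h_k\le b_{k+1}$.
\item From step $J$ on, coordinate $J$ is maximal and moves correctly forever. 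The others must never overtake it; since $b_k-R_k$ is constant from $J$, the others only need to stay inside their boxes.
\end{itemize}
Coordinate $i$ is untouched by any structural constraint before time $i-1$. Using the sign freedom inside the band, I choose its signs $s_{k,i}$ for $k<i-1$ backwards from the required position $\theta b_{i-1}$ at time $i-1$. This fixes $c_{0,i}$. I then check by a direct estimate that these intermediate positions stay within $\theta b_k$ of the target, so they lie in the uncertain band where any sign, including $0$, is admissible. Because $\theta b_k\ge\theta R_k>h_k$ is not automatic, this check may require placing coordinate $i$ at the target ($0$) for early steps and letting the tie rule choose the sign. The deterministic tie rule is then defined precisely as this prescribed sign sequence, which depends only on the query history. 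The inclusion $c_0\in[-1,1]^n$ follows from $b_0=1$.

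\textbf{Step 2: a function whose gradient signs match.} The function must satisfy three requirements.
\begin{itemize}
\item Outside the band $\{|x_i|\le\theta\|x\|_\infty\}$, $\sgn\partial_i f=\sgn x_i$; this is membership in $\MSC_{\mathbf 1}(\theta,0,\Rn)$.
\item At the wrong-step queries $(k,k+1)$, the derivative has the prescribed wrong sign.
\item At the steps where ties are planned, it vanishes; otherwise it has the prescribed sign.
\end{itemize}
I would try the form
\[
 f(x)=\tfrac12\|x\|_2^2+\sum_i\psi\bigl(x_i-\theta\,\mu_i(x)\bigr),
\]
or more simply a separable convex function plus convex couplings between consecutive coordinates $i-1,i$. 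The shift $\theta\mu_i$, with $\mu_i$ a smoothed signed copy of the preceding maximal coordinate, moves the zero of $\partial_i f$ from $0$ to about $\theta x_{i-1}$. With this shift, $\partial_i f$ has the wrong sign exactly on $(0,\theta x_{i-1})$, which lies inside the band.

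A cleaner candidate is a chain quadratic $\tfrac12\sum_i(x_i-\theta\sigma_i x_{i-1})^2+\tfrac{\delta}{2}\|x\|^2$, composed with a smooth convex clipping. This keeps convexity, $C^\infty$, a global Lipschitz gradient, and the unique minimiser $0$. I verify the band condition row by row: $|\partial_i f-\text{(diagonal term)}|<|x_i|$ whenever $|x_i|>\theta\|x\|_\infty$. Unique minimality also follows from Proposition~\ref{prop:clipped-path}. Along the trajectory, the point sits exactly at the boundary value $\theta b_k$, so I would make the coupling slightly stronger there than at the band edge, by a tuning parameter that tends to the limit. Alternatively, I would define the function only up to the finitely many queries before $J$ and use a pure quadratic tail.

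\textbf{Step 3: the case $\theta\le2\beta-1$, and the main obstacle.} When $\theta\le2\beta-1$, take $f(t)=t^2/2$ and $c_0=1$. Every step is correct and no overshoot occurs, so $|c_k|=R_k=b_k$ by the computation in Theorem~\ref{th:csc}.

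The hard part is Step 2. The function must be fixed once, yet it must produce a wrong sign for coordinate $k+1$ \emph{exactly} at distance $\theta b_k$. That value sits on the boundary of the admissible band, so strict inequalities in~\eqref{eq:msc} leave no slack there. It must also respect the sign condition at all points of $\Rn$, not only on the trajectory. I expect this to require the smoothed-max coupling and a careful verification, coordinate by coordinate, that the correction never exceeds $\theta\|x\|_\infty$ in magnitude.
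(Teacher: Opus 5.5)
\paragraph{Gap in Step~2.} Your trajectory design in Step~1 and your treatment of $\theta\le2\beta-1$ agree with the paper. Step~2, however, has a genuine gap. You ask the fixed function to give a \emph{strictly} wrong derivative sign at the wrong-step queries, and no $C^1$ member of $\MSC_{\mathbf 1}(\theta,0,\Rn)$ can do this.

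For $k<J$ the second branch dominates strictly, so $b_{k+1}=\theta b_k+h_k>\max\{b_k-h_k,h_k\}$. A correct step therefore cannot reach $b_{k+1}$. A wrong step needs $|c_{k,i}|\le\theta\|c_k\|_\infty\le\theta b_k$ (Proposition~\ref{prop:envelope}). So it reaches $b_{k+1}$ only when $|c_{k,i}|=\theta b_k$ and $\|c_k\|_\infty=b_k$, that is, exactly on the band boundary. That coordinate is not maximal, so enlarging $|x_i|$ slightly leaves the band with $\|x\|_\infty$ unchanged. Continuity of $\partial_if$ then gives $x_i\partial_if(x)\ge0$ on the boundary. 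Hence $\partial_if$ must \emph{vanish} at each such query, and the outward step must come from the tie rule. A tuning parameter tending to a limit gives only a supremum over a family of functions, not $\|c_k\|_\infty=b_k$ for one function.

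Your quadratic candidates cannot supply these zeros either. A quadratic lies in the class only if every row defect is at most $\theta$. Then $x_i(Hx)_i\ge H_{ii}|x_i|(|x_i|-\theta\|x\|_\infty)$, and equality on the boundary requires every coordinate coupled to $i$ to have modulus $\|x\|_\infty$. In your chain this makes coordinate $k+2$ maximal at time $k$. Its forced correct step then leaves it at $b_k-h_k$, not at the required $\theta b_{k+1}$ in general. The smoothed-max shift comes with no convexity argument at all.

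\paragraph{The missing idea.} You need a potential whose partial derivative vanishes on the \emph{entire} closed band $\{|x_i|\le\theta\|x\|_\infty\}$. Then every in-band sign is a tie, and the trajectory can be designed separately from integrability.

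The paper takes $G(x)=\E\max_ix_i^2/Z_i^2$, with independent $Z_i$ whose smooth density is positive on $(\theta,1)$ and flat at the endpoints. If $|x_i|\le\theta\|x\|_\infty$, then $|x_i|/Z_i<\|x\|_\infty<|x_j|/Z_j$ for a maximal $j$. So $i$ never attains the maximum and $\partial_iG(x)=0$. Outside the band $i$ wins with positive probability, so $x_i\partial_iG(x)>0$. The function $G$ is convex as an expectation of maxima of convex quadratics. Composing with $\psi(s)=\int_0^se^{-1/t}\,dt$ gives $F=\psi\circ G$, which is $C^\infty$ with a globally Lipschitz gradient and the same sign field.

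The coordinates are then built backwards from $+b_j$ at time $j$, using the exact one-step reachable set. From $[-b,b]$, with the forced correct sign outside the band and a free sign inside it, the attainable values are exactly $[-B,B]$ with $B=\max\{b-h,\theta b+h\}$. This also settles the loose point in your Step~1. Earlier positions need not lie in the band, since positions outside it are admissible with their forced correct sign. So neither your direct estimate nor parking coordinates at the target is needed.
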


The construction separates integrability from trajectory design. Let the random variables
$Z_i$, $1\le i\le n$, be independent with a smooth positive density on $(\theta,1)$,
flat at both endpoints, and set
\begin{equation}\label{eq:G-potential}
 G(x)=\E\max_i\frac{x_i^2}{Z_i^2},\qquad
 F(x)=\psi(G(x)),\qquad \psi(s)=\int_0^s e^{-1/t}\,dt.
\end{equation}
The integrand at $t=0$ is defined by its continuous extension. Then
\begin{equation}\label{eq:exact-sign-field}
 \partial_iF(x)=0\ \text{if }|x_i|\le\theta\|x\|_\infty,
 \qquad x_i\partial_iF(x)>0\ \text{otherwise}.
\end{equation}
A coordinate too small to win the random maximum has zero derivative.
Outside that band it wins with positive probability. The flat outer function
makes the potential smooth at the origin without changing these signs.
A backward interval construction allocates one coordinate to each pre-switch
peak. The full regularity, convexity and Lipschitz arguments, followed by the
trajectory construction, appear in Appendix~\ref{app:envelope}.

\begin{example}[a fixed tie rule in dimension three]\label{ex:witness}
For $\beta=1/2$, $\theta=4/5$, $r_0=1$, take the potential above and choose
$s_i=-1$ at every zero derivative. The starting centre and its first two steps
are
\[
 (1,4/5,27/50)\ \longmapsto\ (1/2,13/10,26/25)
 \ \longmapsto\ (3/4,21/20,129/100).
\]
Here $J=2$, and for every $k\ge2$,
$M_k=26/25+2^{-k}$. The limiting error $1.04$ exceeds the initial radius $1$.
\end{example}

Figure~\ref{fig:theta} compares this exact trajectory with the scalar envelope.
\begin{figure}[htbp]
\centering\includegraphics[width=\textwidth]{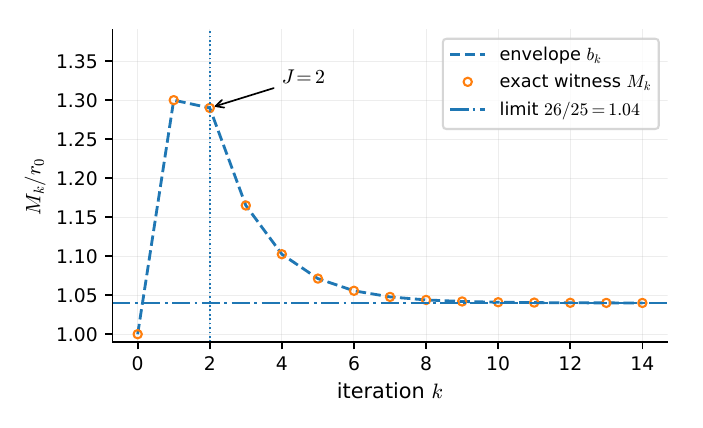}
\caption{The exact rational trajectory in Example~\ref{ex:witness} and the
scalar envelope coincide. After the switch at $J=2$, the error is the sum of
the locked residual $1.04$ and the remaining radius $2^{-k}$. This is a
smooth convex example, not an arbitrary vector field.}\label{fig:theta}
\end{figure}

Sharpness is dimension-free: the dimension may depend on the parameters and
ties may be selected as in the theorem. It is not a claim of sharpness for
every fixed dimension, every tie rule, or the strongly convex quadratic
subclass. Random quadratic trajectories can therefore lie well below this
sharp general envelope without contradicting it. Computationally, if the switch
has not yet been detected after $K$ steps, $b_K$ need not bound the limit;
$b_K+R_K$ is a certified upper bound because $R_K$ is the remaining budget.

\section{Matrix geometry: fixed and adaptive boxes}\label{sec:quad}

\subsection{A computable certificate}\label{def:defect}
For a symmetric matrix $H$ with $H_{ii}>0$, define
\[
 \begin{gathered}
 D=\diag(H_{11},\dots,H_{nn}),\qquad B_H=H-D,\\
 A=D^{-1}|B_H|,\qquad \theta_w(H)=\max_i\frac{(Aw)_i}{w_i}.
 \end{gathered}
\]
The absolute value is entrywise. The dimensionless matrix $A$ measures how
much the other coordinates can corrupt each gradient sign.
\begin{theorem}[averaged-Hessian criterion]\label{th:hess}
Let $f\in C^2$ on a neighbourhood of a box $B$, with
$x^*\in B$ and $\nabla f(x^*)=0$. For $x\in B$ put
$\bar H(x)=\int_0^1\nabla^2f(x^*+t(x-x^*))\,dt$.
If $\bar H(x)$ has positive diagonal and
$\theta_w(\bar H(x))\le\theta<1$ throughout $B$, then
$f\in\MSC_w(\theta,x^*,B)$.
\end{theorem}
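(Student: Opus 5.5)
The plan is to use the integral form of the mean value theorem, which turns the criterion into a pointwise diagonal-dominance estimate on the gradient. Since $\nabla f(x^*)=0$ and $f\in C^2$ on a neighbourhood of the box, for every $x\in B$ the segment from $x^*$ to $x$ lies in $B$ by convexity of the box. Hence
\[
 \nabla f(x)=\int_0^1\nabla^2f(x^*+t(x-x^*))(x-x^*)\,dt=\bar H(x)(x-x^*).
\]
Write $H=\bar H(x)$ and $e=x-x^*$. Then for each coordinate
\[
 \partial_if(x)\,e_i=H_{ii}e_i^2+e_i\sum_{j\ne i}H_{ij}e_j .
\]

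Fix $x\in B$ and an index $i$ with $|e_i|/w_i>\theta\ninfw{e}$; this forces $e\ne0$. Put $m=\ninfw{e}$, so that $|e_j|\le mw_j$ for every $j$. The cross term is bounded by
\[
 \Bigl|e_i\sum_{j\ne i}H_{ij}e_j\Bigr|\le|e_i|\,m\sum_{j\ne i}|H_{ij}|w_j
 =|e_i|\,m\,H_{ii}(Aw)_i\le|e_i|\,m\,H_{ii}\,\theta_w(H)\,w_i .
\]
By hypothesis $\theta_w(H)\le\theta$, so this is at most $H_{ii}|e_i|\,\theta m w_i$. The defect-band assumption gives $\theta m w_i<|e_i|$, and $H_{ii}>0$ makes the inequality strict after multiplying by $H_{ii}|e_i|>0$. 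Therefore the cross term is strictly less than $H_{ii}e_i^2$, which yields $\partial_if(x)e_i>0$. This is exactly condition~\eqref{eq:msc}. Together with differentiability on a neighbourhood of $B$, it gives $f\in\MSC_w(\theta,x^*,B)$.

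The only delicate point is strictness when $\theta=0$: the bound $\theta m w_i<|e_i|$ still holds because $e_i\neq0$, so nothing breaks there. A second point is that the hypothesis must be applied at each $x$ separately with its own $\bar H(x)$. This is why the averaged Hessian, rather than a uniform Hessian bound, is the natural object. I do not expect any real obstacle; the main care is in verifying the identity $\nabla f(x)=\bar H(x)(x-x^*)$, which needs the segment to stay in the region where $f$ is $C^2$. Convexity of the box guarantees this.
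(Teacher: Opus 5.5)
Your proof is correct and is essentially the paper's own argument: the Newton--Leibniz identity $\nabla f(x)=\bar H(x)(x-x^*)$, the bound $|u_j|\le w_j\ninfw{u}$ on the cross terms, and the definition of $\theta_w$ combine to give $\partial_if(x)u_i\ge\bar H_{ii}(x)|u_i|\bigl(|u_i|-\theta w_i\ninfw{u}\bigr)>0$ outside the defect band. Your explicit remarks that the box is convex, so the segment stays where $f\in C^2$, and that $u_i\ne0$ in the band, are details the paper leaves implicit.
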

Indeed, $\nabla f(x)=\bar H(x)(x-x^*)$. In coordinate $i$, the absolute cross
term is at most $\bar H_{ii}(x)w_i\theta\ninfw{x-x^*}$, so the diagonal
contribution fixes the sign outside the defect band. Ordinary Hessian values
at a finite set of sampled points do not establish this uniform condition.
Neither does minimising the aspect separately at each point supply a single
aspect valid everywhere. Convexity is not required: for
$f=\sum_i(1-e^{-(x_i-x_i^*)^2})$, the averaged Hessian is
$\diag(2e^{-(x_i-x_i^*)^2})$ and has zero defect, although the ordinary Hessian
may be indefinite.

\begin{theorem}[optimal aspect]\label{th:perron}
For symmetric $H$ with positive diagonal,
\[
 \theta_{\rm opt}(H):=\min_{w>0}\theta_w(H)=\rho(A).
\]
The minimum is attained, including the reducible case. The symmetric matrix
$S=D^{-1/2}|B_H|D^{-1/2}$ is similar to $A$. On each connected component,
choose a positive eigenvector $v$ for the \emph{largest algebraic} eigenvalue
of $S$ and put $w=D^{-1/2}v$; concatenate the componentwise vectors.
\end{theorem}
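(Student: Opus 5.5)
Theorem~\ref{th:perron} is a Collatz--Wielandt statement for the nonnegative matrix $A=D^{-1}|B_H|$. I will prove the inequality $\theta_w(H)\ge\rho(A)$ for every $w>0$ first, and then exhibit a $w$ attaining equality.

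\textbf{The lower bound.} Fix $w>0$ and put $t=\theta_w(H)$, so that $Aw\le tw$ componentwise. Let $W=\diag(w)$. Then $W^{-1}AW$ is nonnegative with every row sum at most $t$. Hence its $\infty$-norm is at most $t$, and
\[
 \rho(A)=\rho(W^{-1}AW)\le t .
\]
This argument needs no irreducibility.

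\textbf{Symmetrisation.} Next note that
\[
 S=D^{1/2}AD^{-1/2}=D^{-1/2}|B_H|D^{-1/2}.
\]
So $S$ is similar to $A$, and $S$ is symmetric and nonnegative with zero diagonal. Its spectrum is real, and Perron--Frobenius for nonnegative matrices gives $\rho(S)=\lambda_{\max}(S)$, the largest algebraic eigenvalue.

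Permute coordinates so that $S$ is block diagonal. The blocks are the connected components of the graph with edges $\{i,j\}$ for $H_{ij}\ne0$. Because $S$ is symmetric, there are no off-diagonal coupling blocks, so each block $S_c$ is irreducible or a $1\times1$ zero block.

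\begin{itemize}
\item For an irreducible block, Perron--Frobenius provides a strictly positive eigenvector $v_c$ with $S_cv_c=\rho(S_c)v_c$, where $\rho(S_c)$ is the largest algebraic eigenvalue of $S_c$.
\item For a singleton block, take $v_c=1$, with eigenvalue $0$.
\end{itemize}

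Concatenate the $v_c$ into $v>0$ and set $w=D^{-1/2}v>0$. Then
\[
 Aw=D^{-1}|B_H|D^{-1/2}v=D^{-1/2}Sv ,
\]
and on component $c$ this equals $\rho(S_c)\,w$. Therefore
\[
 \theta_w(H)=\max_c\rho(S_c)=\rho(S)=\rho(A).
\]
Combined with the lower bound, this shows the minimum is attained and equals $\rho(A)$.

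\textbf{Expected difficulty.} The main point needing care is the reducible case. A global Perron vector of a reducible matrix may have zero entries, which would make $w$ inadmissible. The block decomposition avoids this, and symmetry guarantees that the decomposition is genuinely block diagonal rather than block triangular. It is also worth checking that on each component "largest algebraic eigenvalue" coincides with the Perron root, so that the prescribed eigenvector is positive. This follows from Perron--Frobenius for irreducible nonnegative matrices: the Perron root is real and at least the modulus of every eigenvalue.
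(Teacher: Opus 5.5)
Your proof is correct and follows essentially the same route as the paper. The paper also bounds $\rho(A)$ by the maximal row sum of $\diag(w)^{-1}A\diag(w)$, and attains equality via Perron vectors of the symmetric similar matrix $S$ on each connected component, with positive weights on isolated coordinates.
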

The proof is the Collatz--Wielandt formula, with a componentwise construction
(Appendix~\ref{app:matrix}). A global positive eigenvector need not exist when
component spectral radii differ. Selecting an eigenvalue of largest
\emph{modulus} is unsafe on bipartite graphs, where both $\rho$ and $-\rho$
occur. Numerically, the returned weights should be checked through
$\max_i(Aw)_i/w_i$ rather than treating a floating-point eigenvalue as an exact
certificate.\label{rem:bipartite}

\subsection{The exact quadratic threshold}
\begin{theorem}[universal quadratic criterion]\label{th:quad-iff}
Let $f(x)=\tfrac12(x-x^*)^\top H(x-x^*)$, with $H=H^\top$ and positive
diagonal. For fixed $w>0$ and $1/2\le\beta<1$, convergence to $x^*$ from
\emph{every} initial box containing $x^*$ and for \emph{every} tie rule holds
if and only if
\begin{equation}\label{eq:quad-exact}
 \theta_w(H)\le2\beta-1.
\end{equation}
Under this condition $H$ is positive definite and $M_k\le R_k$.
\end{theorem}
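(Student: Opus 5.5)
The proof splits into sufficiency, by an invariant-retention argument in the style of Theorem~\ref{th:csc}; positive definiteness, by Gershgorin after diagonal similarity; and necessity, by an explicit bad starting box combined with the locking lemma (Lemma~\ref{lem:lock}). Throughout, write $u_{k,j}=(c_{k,j}-x^*_j)/w_j$ and $\theta=\theta_w(H)$. The gradient coordinate is
\[
 \partial_if(c_k)=H_{ii}w_iu_{k,i}+\sum_{j\ne i}H_{ij}w_ju_{k,j}.
\]

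\emph{Sufficiency.} I would prove by induction that $\ninfw{c_k-x^*}\le R_k$, i.e.\ $x^*\in\Bx(c_k,r_k)$. The base case $k=0$ is the hypothesis that the initial box contains $x^*$. Assume $|u_{k,j}|\le R_k$ for all $j$. Then the cross term in row $i$ has modulus at most $\theta H_{ii}w_iR_k$, so if $|u_{k,i}|>\theta R_k$ the sign of $\partial_if(c_k)$ equals $\sgn u_{k,i}$; this is the quadratic instance of Theorem~\ref{th:hess}.

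There are two cases for coordinate $i$:
\begin{itemize}
\item If the step is correct, then $|u_{k+1,i}|=\bigl||u_{k,i}|-(1-\beta)R_k\bigr|\le\beta R_k$, exactly as in Theorem~\ref{th:csc}.
\item If the step is wrong, or $u_{k,i}=0$ with an arbitrary tie, then necessarily $|u_{k,i}|\le\theta R_k\le(2\beta-1)R_k$. Hence $|u_{k+1,i}|\le(2\beta-1)R_k+(1-\beta)R_k=\beta R_k$.
\end{itemize}
So $M_{k+1}\le R_{k+1}$ for every tie rule, and convergence follows. The argument is the envelope recursion~\eqref{eq:scalar} in the regime where $b_k=R_k$ (Proposition~\ref{prop:envelope}), but done directly.

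\emph{Positive definiteness.} Since $\theta_w(H)\le2\beta-1<1$, the matrix $W^{-1}HW$ with $W=\diag(w)$ has diagonal entries $H_{ii}>0$ and off-diagonal row sums $\sum_{j\ne i}|H_{ij}|w_j/w_i<H_{ii}$. Its Gershgorin discs therefore lie in the open right half-plane. It is similar to the symmetric matrix $H$, whose eigenvalues are real, so they are all positive and $H\succ0$.

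\emph{Necessity.} Suppose $\theta>2\beta-1$ and let row $i$ attain the maximum. Choose $\eta$ with $2\beta-1<\eta<\theta$ and $0<\eta\le1$; this is possible because $2\beta-1<1$. Take any $R>0$ and set
\[
 u_{0,i}=\eta R,\qquad u_{0,j}=-R\,\sgn(H_{ij})\ (j\ne i),
\]
with $u_{0,j}=R$ if $H_{ij}=0$. Then $c_0=x^*+Wu_0$ and $\Bx(c_0,Rw)$ contains $x^*$. We get
\[
 \partial_if(c_0)=H_{ii}w_i\eta R-R\sum_{j\ne i}|H_{ij}|w_j=H_{ii}w_iR(\eta-\theta)<0,
\]
while $u_{0,i}>0$. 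This is a strict wrong step, independent of any tie rule. By Lemma~\ref{lem:lock},
\[
 \lim_m|e_{m,i}|\ge(\eta-(2\beta-1))Rw_i>0,
\]
so convergence to $x^*$ fails from this admissible box.

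The main obstacle is the bookkeeping in necessity. The constructed box must genuinely contain $x^*$, which is why $\eta\le1$ is needed when $\theta\ge1$. The sign must be strictly wrong, so that no tie rule can rescue it, which is why the inequality $\eta<\theta$ is strict. Zero entries $H_{ij}$ must also be handled; they are harmless since they do not contribute to the cross term. Nothing in the argument uses positive definiteness, which is consistent with the statement deriving it as a consequence.
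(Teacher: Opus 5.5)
Your proof is correct and matches the paper's argument: for sufficiency, the row-defect sign certificate plus the overlap-retention induction (the paper invokes Theorem~\ref{th:beta} with $\lambda=0$ where you carry out the induction directly); for positive definiteness, Gershgorin after diagonal scaling (the paper uses the congruence $\diag(w)H\diag(w)$ where you use the similarity $W^{-1}HW$); and for necessity, the same adversarial start $u_i=t\,r_0w_i$, $u_j=-\sgn(H_{ij})r_0w_j$ combined with the locking lemma. The remaining differences, such as allowing $\eta=1$ when $\theta>1$, are cosmetic.
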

The sufficiency combines the averaged-Hessian certificate and the scalar
envelope. For necessity, take a row with defect $\theta_i>2\beta-1$ and choose
$t\in(2\beta-1,\min\{\theta_i,1\})$. Set
$u_i=tr_0w_i$ and $u_j=-\sgn(H_{ij})r_0w_j$ for $j\ne i$, and start at
$x^*+u$. Its $i$-th gradient component is
$H_{ii}r_0w_i(t-\theta_i)<0$, despite $u_i>0$. The first sign is wrong and
locks a residual of at least $(t-2\beta+1)r_0$ in the weighted norm.
There is no tie in the offending coordinate.

\begin{corollary}[when a fixed geometry exists]\label{cor:hmat}
Some $w>0$ and $\beta<1$ give universal convergence if and only if
$\rho(A)<1$, equivalently strict generalised diagonal dominance of $H$.
With optimal weights, any $\beta\ge(1+\rho(A))/2$ suffices.
For unmodified halving, universal convergence of a quadratic requires $H$
to be diagonal.
\end{corollary}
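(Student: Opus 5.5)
The corollary has three claims, and I would derive all of them from Theorem~\ref{th:quad-iff} combined with Theorem~\ref{th:perron}.

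\textbf{First claim.} Suppose some $w>0$ and some $\beta\in[1/2,1)$ give universal convergence. Theorem~\ref{th:quad-iff} then gives $\theta_w(H)\le2\beta-1<1$. Theorem~\ref{th:perron} gives $\rho(A)=\min_{w'>0}\theta_{w'}(H)\le\theta_w(H)$, so $\rho(A)<1$.

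For the converse, assume $\rho(A)<1$. Theorem~\ref{th:perron} says the minimum is attained, so there are Perron-type weights $w$ with $\theta_w(H)=\rho(A)$; this holds even in the reducible case via the componentwise construction. Choose any $\beta$ with $(1+\rho(A))/2\le\beta<1$. Such a $\beta$ exists because $\rho(A)<1$, and it satisfies $\beta\ge1/2$ because $\rho(A)\ge0$. Then $\theta_w(H)=\rho(A)\le2\beta-1$, and Theorem~\ref{th:quad-iff} yields universal convergence. This same argument also proves the second claim, that optimal weights with any $\beta\ge(1+\rho(A))/2$ suffice.

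The identification with strict generalised diagonal dominance is classical. $H$ is strictly generalised diagonally dominant when there is $w>0$ with $H_{ii}w_i>\sum_{j\ne i}|H_{ij}|w_j$ for all $i$, i.e.\ $\theta_w(H)<1$. By Theorem~\ref{th:perron} and the attainment of the minimum, this is equivalent to $\rho(A)<1$, which is the Collatz--Wielandt characterisation.

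\textbf{Third claim (halving).} Take $\beta=1/2$, so the criterion of Theorem~\ref{th:quad-iff} becomes $\theta_w(H)\le0$ for the fixed aspect $w$. Since $\theta_w(H)=\max_i\sum_{j\ne i}|H_{ij}|w_j/(H_{ii}w_i)$ has nonnegative terms and $w>0$, it vanishes exactly when every off-diagonal entry $H_{ij}$ is zero. So $H$ must be diagonal, and conversely a diagonal $H$ with positive diagonal has zero defect for every $w$.

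I expect no real obstacle. The only points needing care are that "some $w$" quantifies over all positive aspects, which is exactly why the attained minimum of Theorem~\ref{th:perron} is needed for sufficiency, and that the chosen $\beta$ lies in $[1/2,1)$.
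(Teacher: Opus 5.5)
Your proof is correct and follows the paper's own argument. Sufficiency comes from the attained minimum in Theorem~\ref{th:perron} combined with Theorem~\ref{th:quad-iff}, and necessity from $\rho(A)\le\theta_w(H)\le 2\beta-1<1$, which the paper phrases as a contrapositive; your explicit treatment of the halving case via $\theta_w(H)=0$ and of the generalised diagonal dominance equivalence spells out points the paper leaves implicit.
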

Strictness cannot be dropped. A positive definite $3\times3$ matrix with
unit diagonal and every off-diagonal entry $1/2$ is non-strictly diagonally
dominant but has $\rho(A)=1$. Nor does Jacobi convergence alone certify this
method: Jacobi involves signed off-diagonal entries, whereas $A$ takes their
absolute values (Appendix~\ref{app:matrix}).\label{rem:sdd}

\begin{remark}[defect versus total cost]\label{rem:cost-w}
The aspect minimising $\theta_w$ need not minimise the total number of
iterations for a common unweighted accuracy. To cover an entire cube of radius
$R$, a box of aspect $w$ needs scalar radius $R/\min_iw_i$.
A sufficient count for unweighted error $\eps<R$ is
\[
 K(w)=\left\lceil
 \frac{\log(R\kappa_w/\eps)}{\log(2/(1+\theta_w(H)))}
 \right\rceil,\qquad \kappa_w=\frac{\max_iw_i}{\min_iw_i},\quad\theta_w(H)<1.
\]
For $H_n=\operatorname{tridiag}(-1,2,-1)$,
$w_i=\sin(\pi i/(n+1))$ and $\theta_{\rm opt}=\cos(\pi/(n+1))$.
The unrounded aspect overhead
$\log\kappa_w/\log(1/\beta)$ is $\Theta(n^2\log n)$ at
$\beta=(1+\theta_{\rm opt})/2$. It is not merely logarithmic in $n$.
\end{remark}

\subsection{How badly fixed geometry can fail}
The example in the introduction has a moderately conditioned positive definite
Hessian yet locks a substantial error. A higher-dimensional construction
approaches the largest possible error $2r_0$ allowed by the initial radius and
the entire travel budget.
\begin{theorem}[near-maximal failure]\label{th:impossible}
For every $n\ge5$, fixed $\beta\in[1/2,1)$, fixed $w>0$, and
$\eta\in(0,1/4)$, there are a positive definite quadratic and an admissible
initial box such that
\[
 M_\infty>\left(2-\frac{4(1+\eta)}n\right)r_0.
\]
One may use $H_\epsilon=\mathbf1\mathbf1^\top+
\epsilon(I-\mathbf1\mathbf1^\top/n)$ with sufficiently small $\epsilon>0$.
\end{theorem}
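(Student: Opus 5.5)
The plan is to exploit the fact that for $H_\epsilon=\mathbf1\mathbf1^\top+\epsilon(I-\mathbf1\mathbf1^\top/n)$ the gradient at $c=x^*+u$ is
\[
 H_\epsilon u=(\mathbf1^\top u)\,\mathbf1+\epsilon\bigl(u-\bar u\mathbf1\bigr),\qquad \bar u=\mathbf1^\top u/n .
\]
When $\epsilon$ is small and $\mathbf1^\top u$ is bounded away from $0$, \emph{all} coordinates receive the same sign, namely $\sgn(\mathbf1^\top u)$. The matrix $H_\epsilon$ is positive definite for every $\epsilon>0$, with eigenvalues $n$ and $\epsilon$. The method then moves every coordinate in the same direction, and the travel budget of Lemma~\ref{lem:nested} lets one coordinate be driven away from the target for almost the whole budget. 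This happens while the others approach the target and hold the common sign fixed.

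Write $u_i=w_iv_i$ with $v_i\in[-r_0,r_0]$, so that $x^*\in B_0$ is automatic. Let coordinate $1$ be a coordinate with smallest weight; then $w_1\le W/n$, where $W=\sum_iw_i$. Start at $v_1=(1-\delta)r_0$ and $v_j=-r_0$ for $j\ne1$, with a small $\delta>0$. Then $\mathbf1^\top u<0$, so every sign is $-1$, and every $v_i$ increases by $(1-\beta)R_k$ at step $k$. After total displacement $t$ we have
\[
 \mathbf1^\top u=w_1\bigl((1-\delta)r_0+t\bigr)+\sum_{j\ne1}w_j(t-r_0).
\]
This remains negative as long as $tW<r_0(W-2w_1)+\delta w_1r_0$, in particular for all $t<r_0(1-2/n)$. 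Hence the first time the aggregate sign can change, coordinate $1$ sits at $v_1\ge(1-\delta)r_0+t_*$, where $t_*$ is the first partial sum $\sum_{k<m}(1-\beta)R_k$ that reaches or exceeds $r_0(1-2/n)$ minus a margin. From that moment the remaining travel is $R_m=r_0-t_*$, so by the locking bound (Lemma~\ref{lem:lock}) $|v_1|$ finally remains at least
\[
 (1-\delta)r_0+t_*-R_m=(1-\delta)r_0+2t_*-r_0 .
\]
This quantity is close to $2r_0-4r_0/n$ when $t_*\approx r_0(1-2/n)$.

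The main obstacle is discreteness. The partial sums $t$ jump in increments $(1-\beta)R_k$, and before the switch the last increment may be large. Bounding it only by something of size $r_0/n$ is not enough in general. I would first handle this by tuning the start rather than relying on a fixed $\delta$. The remaining coordinates $v_j$, $j\ne1$, need not all start at $-r_0$. One can shift them, using their freedom in $[-r_0,r_0]$, so that the zero crossing of $\mathbf1^\top u$ happens just after a step boundary: the aggregate stays negative through step $m-1$, where $t_*=r_0-R_m$ with $R_m$ as small as compatible with $R_m\ge 2r_0w_1/W$. The slack factor $1+\eta$ absorbs both the gap between $R_m$ and $2r_0/n$ and the $\delta$-perturbation. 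The hypotheses $n\ge5$ and $\eta<1/4$ should be exactly what guarantees that a suitable geometric level $R_m$ exists and that the margin is positive. This calibration step is where I expect to spend most of the effort.

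The final step is to fix $\epsilon$ small enough. Along the finitely many pre-switch steps, $|\mathbf1^\top u|$ is bounded below by a positive margin. Choosing $\epsilon\cdot\|u-\bar u\mathbf1\|_\infty$ smaller than that margin ensures that every gradient component has the sign of $\mathbf1^\top u$, with no ties. The behaviour after the switch is irrelevant, because the locking lemma covers every continuation. This yields $M_\infty\ge|v_1(\infty)|>\bigl(2-4(1+\eta)/n\bigr)r_0$.
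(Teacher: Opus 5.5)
Your construction is the paper's, mirrored. The paper takes $p=\arg\min_iw_i$ and starts exactly at the vertex $u_{0,p}=-r_0w_p$, $u_{0,j}=r_0w_j$ for $j\ne p$, so your $\delta$ is unnecessary. It uses the same common-sign mechanism, the same choice of small $\epsilon$ and the same travel-budget bound. In fact your third paragraph already proves the theorem. The ``main obstacle'' you identify is not real, and the calibration you propose should be dropped.

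\textbf{Why discreteness is not an obstacle.} The common sign is guaranteed at every centre $c_k$ whose displacement $t_k=(1-\beta^k)r_0$ is below the threshold. The step taken from the last such centre is itself a wrong step for coordinate $1$. Let $m$ be the first index with $t_m$ at or above the threshold. Then coordinate $1$ keeps at least
\[
(1-\delta)r_0+t_m-R_m=(2-\delta)r_0-2R_m,
\]
which increases with $t_m$. So a large last increment $(1-\beta)R_{m-1}$ only enlarges the locked error.

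\textbf{Why the calibration is mis-aimed.} The sign condition constrains $R_{m-1}$, not $R_m$. Demanding $R_m\ge2r_0w_1/W$ therefore confuses the two levels and can only weaken the bound. Raising some $v_j$ above $-r_0$ also goes the wrong way: it just makes $\mathbf1^\top u$ vanish earlier.

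\textbf{How the paper handles it.} The paper sets $\gamma=2w_p/\mathbf1^\top w\le2/n$ and $T=\min\{k:\beta^k<(1+\eta)\gamma\}$.
\begin{itemize}
\item For $k<T$, $S_k=r_0\mathbf1^\top w(\beta^k-\gamma)\ge\eta\gamma r_0\mathbf1^\top w$. This uniform margin gives an explicit $\epsilon$; your finite-minimum margin works equally well.
\item The locked error is $(2-2\beta^T)r_0>(2-2(1+\eta)\gamma)r_0$, which gives the claim.
\item The slack $\eta$ serves only to provide this margin.
\item The hypotheses $n\ge5$, $\eta<1/4$ only guarantee $(1+\eta)\gamma<1$, hence $T\ge1$. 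They are not needed to find a suitable geometric level.
\end{itemize}
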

The signs initially follow the almost rank-one coupling rather than the small
individual coordinate error; enough irreversible displacement is accumulated
before they can change. The proof in Appendix~\ref{app:matrix} makes the
choice of $\epsilon$ and all quantifiers explicit. This is an existence
result for each fixed geometry, not a claim that every matrix outside the
certified class has near-maximal error.\label{rem:crit}

\subsection{Adaptive half-sizes without initial enlargement}
A known comparison matrix permits a less conservative geometric response.
For an arbitrary initial half-size vector $r^{\mathrm{init}}>0$, put
\begin{equation}\label{eq:newbox}
 \begin{split}
 t_k&=\min\{r_k,Ar_k\},\qquad
 h_k=(r_k-t_k)/2,\qquad r_{k+1}=(r_k+t_k)/2,\\
 c_{k+1}&=c_k-h_k\odot s_k,\qquad r_0=r^{\mathrm{init}}.
 \end{split}
\end{equation}
A coordinate with excessive cross-coupling is temporarily left unchanged;
other coordinates shrink until that obstruction disappears.
Figure~\ref{fig:adaptive-geometry} illustrates the resulting coordinatewise cut.
\begin{figure}[htbp]
\centering
\includegraphics[width=\textwidth]{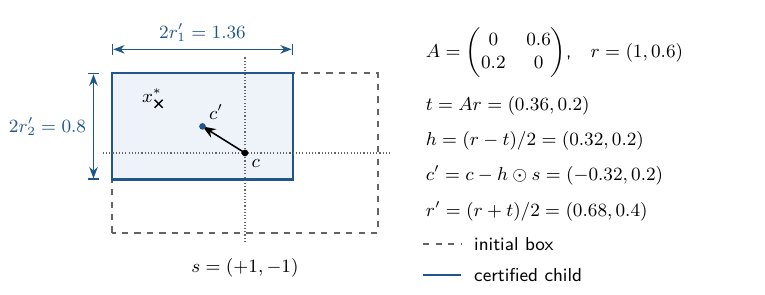}
\caption{One adaptive step for $H=\left(\begin{smallmatrix}1&0.6\\0.6&3\end{smallmatrix}\right)$
and $c=0$. The observed signs select the child
$[-1,0.36]\times[-0.2,0.6]$, with unequal coordinate contractions.
The marker $x^*=(-0.65,0.37)$ is one compatible minimiser.
Each retained interval accounts for the largest coordinate error at which its
sign can be wrong; no initial enlargement is needed}
\label{fig:adaptive-geometry}
\end{figure}
\begin{theorem}[adaptive retention and convergence]\label{th:adaptive}
For a quadratic as above with $\rho(A)<1$, rule~\eqref{eq:newbox} preserves
$x^*$ and nestedness from every box $\Bx(c_0,r^{\mathrm{init}})\ni x^*$.
If $Aw\le qw$ for $w>0$ and $q<1$, then
\begin{equation}\label{eq:adapt-rate}
 r_k\le C_0\left(\frac{1+q}{2}\right)^kw,\qquad
 C_0=\max_i r_i^{\mathrm{init}}/w_i.
\end{equation}
Among rules choosing $r'$ from $H,r$ before observing the current sign vector,
using displacement $r-r'$ and requiring retention for every admissible centre
and every tie, the componentwise smallest possible $r'$ is
\begin{equation}\label{eq:one-step-opt}
 r'_i=\tfrac12\bigl(r_i+\min\{r_i,(Ar)_i\}\bigr).
\end{equation}
\end{theorem}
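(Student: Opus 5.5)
The argument is coordinatewise, with $e_k=x^*-c_k$ and the invariant $|e_{k,i}|\le r_{k,i}$ for every $i$. The key observation is a sign lemma. If $|e_{k,i}|>(Ar_k)_i$, then the $i$-th gradient component $(H(c_k-x^*))_i=H_{ii}\bigl(-e_{k,i}-\sum_{j\ne i}(H_{ij}/H_{ii})e_{k,j}\bigr)$ has the sign of $-e_{k,i}$. This holds because the cross term is bounded by $H_{ii}(A|e_k|)_i\le H_{ii}(Ar_k)_i$. So the sign is correct whenever $|e_{k,i}|>t_{k,i}$; note that $t_{k,i}\le(Ar_k)_i$, and if $t_{k,i}=r_{k,i}$ the condition cannot occur.

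\textbf{Retention.} Set $h=h_{k,i}=(r_{k,i}-t_{k,i})/2$ and $r'=r_{k+1,i}=(r_{k,i}+t_{k,i})/2$, so that $h+r'=r_{k,i}$. There are two cases.

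In the first case $|e_{k,i}|>t_{k,i}$. The sign is correct, so $|e_{k+1,i}|=\bigl||e_{k,i}|-h\bigr|$. This is at most $\max\{r_{k,i}-h,\,h\}=\max\{r',h\}=r'$, since $h\le r'$.

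In the second case $|e_{k,i}|\le t_{k,i}$. Any sign, including any tie resolution, gives $|e_{k+1,i}|\le t_{k,i}+h=r'$.

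Nestedness follows from $h+r'=r_{k,i}$, exactly as in Lemma~\ref{lem:nested}.

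\textbf{Rate.} I would prove by induction that $r_k\le C_kw$ with $C_k=C_0((1+q)/2)^k$. Two monotonicity facts drive this. First, $A\ge0$ entrywise, so $Ar_k\le C_kAw\le qC_kw$. Second, $r_{k+1}=\tfrac12(r_k+\min\{r_k,Ar_k\})\le\tfrac12(r_k+Ar_k)$. Combining them gives $r_{k+1}\le\tfrac12(C_k+qC_k)w$. The base case is $r_0\le C_0w$ by the definition of $C_0$. The existence of such $w$ and $q$ follows from $\rho(A)<1$ by Theorem~\ref{th:perron}.

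\textbf{One-step optimality.} This is the main obstacle: one must exhibit, for each coordinate $i$, an admissible centre in which the sign in coordinate $i$ can be wrong at error level $m_i:=\min\{r_i,(Ar)_i\}$ (approached, or attained with a tie). The rule must retain $x^*$ for this configuration. If the sign is wrong, the new error is $m_i+(r_i-r'_i)\le r'_i$, which forces $r'_i\ge(r_i+m_i)/2$. For the construction I would mimic the necessity proof of Theorem~\ref{th:quad-iff}: take $e_j=\sgn(H_{ij})r_j$ for $j\ne i$ (oriented so that the cross term opposes) and $e_i=m_i$.

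If $(Ar)_i\ge r_i$, then $|e_i|\le r_i$ and the $i$-th gradient component is $H_{ii}(-m_i+(Ar)_i)\cdot(\pm)$. This either vanishes (a tie, where the adversarial rule picks the wrong sign) or has the wrong sign. If $(Ar)_i<r_i$, the component is exactly zero at $e_i=(Ar)_i$, and the adversarial tie rule again gives a wrong step. Since all coordinates $j\ne i$ are free within $r_j$, this centre is admissible. The rule is fixed before the signs are seen and must work for all ties, so $r'_i\ge(r_i+m_i)/2$ holds componentwise. The retention part shows this bound is attained.
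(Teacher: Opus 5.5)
Your retention and rate arguments follow the paper. The paper phrases the rate through monotonicity and positive homogeneity of $T(r)=\tfrac12(r+\min\{r,Ar\})$, which is the same induction.

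The optimality construction has a sign slip that breaks it as written. You use $e=x^*-c$ and take $e_i=m_i>0$, so the correct sign in coordinate $i$ is $-1$. Your choice $e_j=\sgn(H_{ij})r_j$ then gives
\[
 (H(c-x^*))_i=-H_{ii}m_i-\sum_{j\ne i}|H_{ij}|r_j=-H_{ii}\bigl(m_i+(Ar)_i\bigr),
\]
which is negative whenever $(Ar)_i>0$. The cross term reinforces the diagonal term instead of opposing it, so the sign is correct and nothing is forced. The paper's choice $u_j=-\sgn(H_{ij})r_j$ is stated for $u=c-x^*$ with $u_i>0$. Because you flipped the sign of the $i$-th coordinate, you must flip the others as well and take $e_j=-\sgn(H_{ij})r_j$. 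The component then becomes $H_{ii}\bigl((Ar)_i-m_i\bigr)\ge0$. This is exactly the expression your case analysis uses, and with this correction your argument is valid.

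After the fix, your route differs from the paper's in one respect. You place $e_i$ exactly at the threshold $m_i$ and rely on an adversarial resolution of the resulting tie (when $0<(Ar)_i\le r_i$). The paper instead refutes any smaller $r'_i$ by taking $u_i=t$ with $t\le r_i$, $t<(Ar)_i$ and $t>2r'_i-r_i$. There $(Hu)_i=H_{ii}(t-(Ar)_i)<0$ is strictly wrong, so no tie is involved. When $(Ar)_i=0$ it uses $u_i=0$, where either sign moves outward. Both arguments prove the statement, since it quantifies over every tie. The paper's version gives slightly more: the one-step lower bound holds for every tie rule, including a favourable one, whereas yours attains the bound exactly but needs the adversarial tie.
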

A wrong sign can occur only at $|c_i-x_i^*|\le(Ar)_i$; retention also gives
$|c_i-x_i^*|\le r_i$. Thus its worst distance is $t_i$, and
$t_i+h_i=r'_i$. A correct step is safe as well. Monotonicity of the half-size
map gives~\eqref{eq:adapt-rate}. The one-step lower bound uses a rowwise
adversarial centre; it is not an optimality claim over algorithms exploiting
the joint observed sign vector (Appendix~\ref{app:matrix}).

\begin{corollary}[exact limiting half-sizes with bounded error]
\label{cor:matrix-noise}\label{cor:capped-limit}
Under the hypotheses of Theorem~\ref{th:adaptive}, suppose the gradient answer has component error at most $H_{ii}\xi_i$,
where $\xi\ge0$ is fixed. Replace $Ar_k$ in~\eqref{eq:newbox} by
$Ar_k+\xi$. Retention and nestedness still hold. The half-sizes converge to
the unique solution $v\in[0,r^{\mathrm{init}}]$ of
\begin{equation}\label{eq:capped-limit}
 v=\min\{r^{\mathrm{init}},Av+\xi\}.
\end{equation}
Moreover,
\begin{equation}\label{eq:capped-rate}
 0\le r_k-v\le\left(\frac{I+A}{2}\right)^k(r^{\mathrm{init}}-v),\qquad
 v\le (I-A)^{-1}\xi.
\end{equation}
The centres converge and $|c_\infty-x^*|\le v$.
If $r^{\mathrm{init}}\ge(I-A)^{-1}\xi$, then $v=(I-A)^{-1}\xi$.
\end{corollary}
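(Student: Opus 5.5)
Four ingredients are needed: one-step retention with the modified rule, monotonicity of the half-size map $T(r)=\tfrac12(r+\min\{r,Ar+\xi\})$, convergence of the iterates $r_{k+1}=T(r_k)$ to the capped fixed point $v$, and uniqueness of that fixed point together with the bound $v\le(I-A)^{-1}\xi$. Nestedness is immediate coordinatewise. With $t_k=\min\{r_k,Ar_k+\xi\}\le r_k$, the displacement is $h_k=(r_k-t_k)/2$ and the new half-size is $r_{k+1}=r_k-h_k$, so $h_k+r_{k+1}=r_k$.

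\textbf{Retention.} Assume inductively that $|e_{k,j}|\le r_{k,j}$ for every $j$. Write $\partial_if(c_k)=H_{ii}(c_{k,i}-x_i^*)+\sum_{j\ne i}H_{ij}(c_{k,j}-x^*_j)$.
\begin{itemize}
\item The cross term is at most $H_{ii}(Ar_k)_i$ in absolute value.
\item The answer error is at most $H_{ii}\xi_i$.
\item Hence a wrong or tied sign in coordinate $i$ forces $|c_{k,i}-x_i^*|\le(Ar_k+\xi)_i$. Combined with retention, this gives $|e_{k,i}|\le t_{k,i}$.
\item After a wrong step, the error is at most $t_{k,i}+h_{k,i}=r_{k+1,i}$.
\item After a correct step, the error is $\bigl||e_{k,i}|-h_{k,i}\bigr|\le\max\{r_{k,i}-h_{k,i},h_{k,i}\}=r_{k+1,i}$, using $h\le r/2\le r-h$.
\end{itemize}
This closes the induction. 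Once the half-sizes converge, the centres converge because the displacements are bounded by $r_k-r_{k+1}$, which telescopes. Retention then passes to the limit and gives $|c_\infty-x^*|\le v$.

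\textbf{Fixed-point analysis.} The map $T$ is monotone because $A\ge0$. Moreover $T(r)\le r$, so the iterates decrease monotonically and converge, and continuity makes the limit a fixed point. The fixed points of $T$ are exactly the solutions of $r=\min\{r,Ar+\xi\}$, that is, $r\le Ar+\xi$. That condition does not involve $r^{\mathrm{init}}$, so this route cannot pin down the limit as the solution of~\eqref{eq:capped-limit}, and I would reformulate. Let $v$ be the solution of $v=\Phi(v):=\min\{r^{\mathrm{init}},Av+\xi\}$. The map $\Phi$ is monotone and a contraction in the Perron-weighted norm with $\|A\|_w\le q<1$, so $v$ exists and is unique in $[0,r^{\mathrm{init}}]$. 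It satisfies $v\le Av+\xi$, hence $(I-A)v\le\xi$, and $(I-A)^{-1}\ge0$ yields $v\le(I-A)^{-1}\xi$. If $r^{\mathrm{init}}\ge(I-A)^{-1}\xi$, then $(I-A)^{-1}\xi$ is itself a fixed point of $\Phi$, so uniqueness gives $v=(I-A)^{-1}\xi$.

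\textbf{Rate and identification of the limit.} First, $r_k\ge v$ by induction: $r_0=r^{\mathrm{init}}\ge v$, and monotonicity gives $T(r_k)\ge T(v)=\tfrac12(v+\min\{v,Av+\xi\})=v$ since $v\le Av+\xi$.

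For the upper bound, set $d_k=r_k-v\ge0$ and apply $\min\{a,b\}-\min\{a',b'\}\le\max\{a-a',b-b'\}$:
\[
 \min\{r_k,Ar_k+\xi\}-\min\{v,Av+\xi\}\le\max\{d_k,Ad_k\}.
\]
This gives only $d_{k+1}\le\tfrac12\bigl(d_k+\max\{d_k,Ad_k\}\bigr)$, not the claimed $d_{k+1}\le\tfrac{I+A}{2}d_k$.

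This is the main obstacle. The stated rate needs $t_k-v\le A(r_k-v)$, which holds in coordinates where $t_{k,i}=(Ar_k+\xi)_i$, because there $v_i\ge(Av+\xi)_i$ is not available but $v_i=\min\{r^{\mathrm{init}}_i,(Av+\xi)_i\}\ge\ldots$. The delicate case is a coordinate with $t_{k,i}=r_{k,i}<(Ar_k+\xi)_i$. There I would have to use the cap $v_i=r^{\mathrm{init}}_i$: if $v_i=r^{\mathrm{init}}_i$, then $r_{k,i}=r^{\mathrm{init}}_i$ stays frozen and $d_{k,i}=0$; otherwise $v_i=(Av+\xi)_i$ and $t_{k,i}\le(Ar_k+\xi)_i$ gives $t_{k,i}-v_i\le(Ad_k)_i$.

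So the key lemma to prove first is this dichotomy: $r_{k,i}$ is frozen exactly in the capped coordinates of $v$. I would establish it by induction on $k$ using $r_k\ge v$. Once it holds, $d_{k+1}\le\tfrac{I+A}{2}d_k$ follows coordinatewise, and since $\rho\bigl(\tfrac{I+A}{2}\bigr)<1$, iterating gives~\eqref{eq:capped-rate} and the convergence $r_k\to v$.
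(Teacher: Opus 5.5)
Your proposal is correct and follows the paper's proof essentially step for step. The shared steps are retention with $t_k=\min\{r_k,Ar_k+\xi\}$, the unique fixed point $v$ of the contraction $z\mapsto\min\{r^{\mathrm{init}},Az+\xi\}$, and the sandwich $v\le r_{k+1}\le r_k\le r^{\mathrm{init}}$ from monotonicity of $T$ and $T(v)=v$. The rate comes from the same case split: if $v_i<r^{\mathrm{init}}_i$, then $v_i=(Av+\xi)_i$ gives $t_{k,i}-v_i\le(Ad_k)_i$; if $v_i=r^{\mathrm{init}}_i$, the sandwich forces $d_{k,i}=0$.

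The ``key lemma'' you defer is only needed in that one direction, and it follows at once from the sandwich, so no separate induction or ``exactly'' claim is required. The garbled remark about coordinates with $t_{k,i}=(Ar_k+\xi)_i$ can be dropped, because your final case split works for either branch of the minimum.
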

The capped map in~\eqref{eq:capped-limit} is a contraction in the weighted
norm whenever $Aw\le qw$, $q<1$. It therefore gives a computable exact radius
limit, not only an upper bound. For instance, with
$A=\left(\begin{smallmatrix}0&0.05\\5&0\end{smallmatrix}\right)$,
$\xi=(0.03,0.04)$ and $r^{\mathrm{init}}=(0.02,1)$, the limit is
$v=(0.02,0.14)$, strictly smaller than the uncapped resolvent bound in both
coordinates. Appendix~\ref{app:matrix} proves the finite-time estimate.

Under the positive-diagonal and stationarity assumptions of Theorem~\ref{th:hess},
the same retention argument applies to a nonlinear problem when one known
nonnegative $A$ uniformly bounds the normalised off-diagonal absolute values
of its averaged Hessians. Pointwise spectral-radius bounds without a common
majorant do not provide this adaptive certificate.

\section{Comparisons and imperfect signs}\label{sec:compare}

\subsection{A derivative-free retention rule}
A binary order oracle returns $+1$ when $f(a)>f(b)$ and $-1$ when
$f(a)<f(b)$; at equality either answer is admissible. Define the target class
without referring to derivatives.
\begin{definition}[common-target monotone sections]\label{def:cmp}
For $x^*\in B$, a function $f:B\to\R$ belongs to $\CMP(x^*,B)$ if, for every fixed choice of
the other coordinates, its $i$-th section is strictly decreasing up to and
including $x_i^*$, and strictly increasing from and including $x_i^*$.
Equivalently, whenever the corresponding points belong to $B$,
\begin{equation}\label{eq:cmp}
 \begin{aligned}
 a<b\le x_i^*&\ \Longrightarrow\ f(x_{-i},a)>f(x_{-i},b),\\
 x_i^*\le a<b&\ \Longrightarrow\ f(x_{-i},a)<f(x_{-i},b).
 \end{aligned}
\end{equation}
\end{definition}
Continuity is not assumed; the inequalities involving the target itself are
part of the definition. They imply that $x^*$ is the unique minimiser.
For differentiable functions, $\CSC\subseteq\CMP\subseteq\CSC^\circ$,
but neither inclusion may be reversed in general
(Proposition~\ref{prop:cmp-vs-csc}). The zero-derivative example from
Sect.~\ref{sec:algorithm} belongs to $\CMP$ and illustrates why comparisons
can be informative when the derivative sign is not.

For $q\in(0,1]$, query
\begin{equation}\label{eq:compare-rule}
 s_{k,i}=\operatorname{cmp}\bigl(c_k+qr_{k,i}e_i,
                                  c_k-qr_{k,i}e_i\bigr),\qquad
 c_{k+1,i}=c_{k,i}-(1-\beta)r_{k,i}s_{k,i}.
\end{equation}
Here $e_i$ is a coordinate unit vector, not the error vector $e_k$.
Both probes lie in the current box; the radius contracts by $\beta$.
The natural coupled rule, denoted \CUBEC, takes $q=1-\beta$, so that the probe
distance equals the centre displacement.

\begin{theorem}[sharp probe--contraction trade-off]\label{th:compare}\label{th:probe-sharp}
For fixed $1/2\le\beta<1$ and $0<q\le1$, rule~\eqref{eq:compare-rule}
retains the target and converges from every initial box for every
$f\in\CMP(x^*,B_0)$ and every tie rule if and only if
\begin{equation}\label{eq:probe-threshold}
 q\le2\beta-1.
\end{equation}
Under this condition $M_k\le R_k$. In particular, the coupled rule has the
sharp threshold $\beta\ge2/3$ and uses
$n\lceil\log(r_0/\eps)/\log(1/\beta)\rceil$ binary comparisons to achieve
$M_k\le\eps$.
\end{theorem}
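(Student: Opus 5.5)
The plan is to argue coordinate by coordinate, as in the proof of Theorem~\ref{th:csc}. Sufficiency will be an induction on retention, and necessity will come from a separable one-coordinate counterexample combined with the locking lemma (Lemma~\ref{lem:lock}).

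\emph{Sufficiency.} Assume $q\le2\beta-1$. The induction hypothesis is $|e_{k,i}|\le r_{k,i}$ for all $i$, which holds at $k=0$ because $x^*\in B_0$.
\begin{itemize}
\item \emph{Probes are admissible.} By Lemma~\ref{lem:nested} the boxes are nested. Since $q\le1$, both probes $c_k\pm qr_{k,i}e_i$ lie in $\Bx(c_k,r_k)\subseteq B_0$, so Definition~\ref{def:cmp} applies to the $i$-th section through $c_k$. The other coordinates $(c_k)_{-i}$ are arbitrary, which is allowed because the definition holds for every fixed choice of them.
\item \emph{Far coordinates get the correct answer.} Let $d=c_{k,i}-x_i^*$. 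Suppose $d\ge qr_{k,i}$. Then both probe coordinates satisfy $x_i^*\le c_{k,i}-qr_{k,i}<c_{k,i}+qr_{k,i}$. The second line of~\eqref{eq:cmp}, whose inequalities include the target itself, gives a strict comparison with answer $+1$, which is correct. The case $d\le-qr_{k,i}$ is symmetric. No tie can occur in these cases.
\item \emph{Wrong answers are confined to the band.} A wrong answer, or an arbitrary tie resolution, is therefore possible only when $|d|<qr_{k,i}$. In that case
\[
|e_{k+1,i}|\le |d|+(1-\beta)r_{k,i}<(q+1-\beta)r_{k,i}\le\beta r_{k,i}.
\]
\item \emph{Correct steps and zero error.} A correct step gives $\bigl||d|-(1-\beta)r_{k,i}\bigr|\le\beta r_{k,i}$ exactly as in Theorem~\ref{th:csc}. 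The case $d=0$ gives $(1-\beta)r_{k,i}\le\beta r_{k,i}$.
\end{itemize}
This closes the induction, yielding $x^*\in\Bx(c_k,r_k)$ and $M_k\le R_k$. The comparison count then follows because each iteration uses exactly $n$ binary comparisons (Table~\ref{tab:oracles}), and $K=\lceil\log(r_0/\eps)/\log(1/\beta)\rceil$ iterations suffice. Setting $q=1-\beta$ in the threshold gives $1-\beta\le2\beta-1$, that is, $\beta\ge2/3$.

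\emph{Necessity.} Assume $q>2\beta-1$. We will build a separable $f(x)=\sum_j g_j(x_j)$, whose membership in $\CMP$ reduces to one-dimensional strict unimodality of each $g_j$ about $x_j^*$.
\begin{itemize}
\item \emph{The offending coordinate.} Fix $t\in(2\beta-1,q)$ and put $x_1^*=c_{0,1}+tr_0w_1$. Take the asymmetric piecewise linear profile
\[
g_1(y)=a\,(x_1^*-y)\ \text{for } y\le x_1^*,\qquad g_1(y)=b\,(y-x_1^*)\ \text{for } y\ge x_1^*.
\]
Since $t<q$, the probe $c_{0,1}+qr_0w_1$ lies $(q-t)r_0w_1$ beyond the target. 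The probe $c_{0,1}-qr_0w_1$ lies $(q+t)r_0w_1$ below it.
\item \emph{Forcing the wrong answer.} Choose $b(q-t)>a(q+t)$. The comparison then strictly returns $+1$, so the centre moves to $c_{0,1}-(1-\beta)r_0w_1$, away from $x_1^*$. There is no tie in this coordinate.
\item \emph{Remaining coordinates.} For $j\ne1$ take $g_j(y)=|y-x_j^*|$ with $x_j^*=c_{0,j}$, so the box is admissible.
\item \emph{Locking the error.} Lemma~\ref{lem:lock} applied at $k=0$ gives a limiting weighted error of at least $(t-(2\beta-1))r_0>0$ in coordinate $1$. Hence convergence to $x^*$ fails.
\end{itemize}
Continuity is not required by Definition~\ref{def:cmp}, but the piecewise linear choice is continuous anyway.

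\emph{Main obstacle.} The only step I expect to need care is the boundary case $|d|=qr_{k,i}$, where one probe coincides with the target. There I must confirm that the ``including $x_i^*$'' clauses in~\eqref{eq:cmp} force a strict, correct answer, so that the band of possible errors is open. Even if they did not, the bound would still hold with equality, since $|d|+(1-\beta)r_{k,i}\le(q+1-\beta)r_{k,i}\le\beta r_{k,i}$. So sufficiency survives either way.
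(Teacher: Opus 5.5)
Your proof is correct and follows the paper's argument: sufficiency by the same retention induction (probes correct once $|c_{k,i}-x_i^*|\ge qr_{k,i}$, otherwise new error below $(q+1-\beta)r_{k,i}\le\beta r_{k,i}$), and necessity by an asymmetric one-dimensional profile with target offset $t\in(2\beta-1,q)$ combined with the locking lemma. The only difference is in the witness. The paper uses the piecewise quadratic $F_t$ with $C>((q+t)/(q-t))^2$, which also makes the counterexample strongly convex and $C^{1,1}$. Your piecewise linear profile with $b(q-t)>a(q+t)$ is enough because $\CMP$ imposes no regularity, and your separable padding carries the witness to any dimension.
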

If $|c_{k,i}-x_i^*|\ge qr_{k,i}$, both probes are on the same monotone side
(or one is the target), so the answer is correct. Otherwise either direction
has new error at most $(q+1-\beta)r_{k,i}$, which is at most $\beta r_{k,i}$
exactly under~\eqref{eq:probe-threshold}. For necessity, choose
$t\in(2\beta-1,q)$ and the one-dimensional function
\[
 F_t(x)=\begin{cases}(x-t)^2,&x\le t,\\ C(x-t)^2,&x\ge t,\end{cases}
 \qquad C>\left(\frac{q+t}{q-t}\right)^2.
\]
Starting from $c_0=0$, $r_0=1$, the first comparison is wrong and locks an
error at least $t-(2\beta-1)>0$. This witness is strongly convex and $C^{1,1}$.
Appendix~\ref{app:comparison} gives the full proof, including the equality
case. The threshold concerns this simultaneous single-box rule, not all
algorithms for unimodal search.

Figure~\ref{fig:probe} displays the admissible probe--contraction region.
\begin{figure}[htbp]
\centering\includegraphics[width=\textwidth]{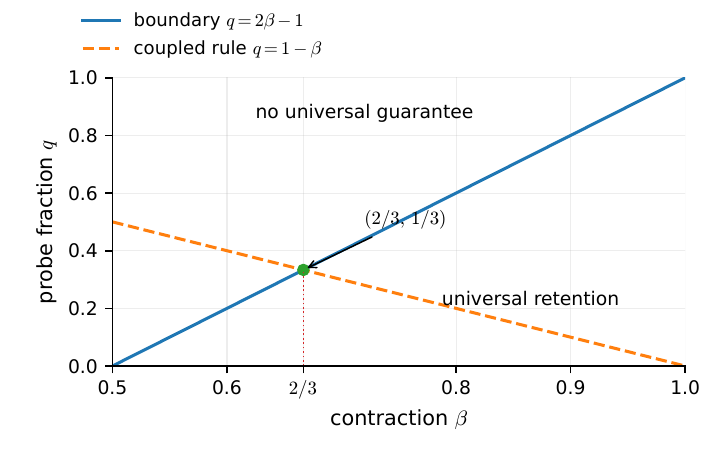}
\caption{Exact probe--contraction trade-off on the common-target section class.
The admissible region lies below $q=2\beta-1$. The coupled line $q=1-\beta$
first enters it at $(\beta,q)=(2/3,1/3)$. Smaller probes permit faster geometric
contraction but amplify value noise; the diagram states a theorem, not a
simulated phase boundary}
\label{fig:probe}
\end{figure}

Smaller $q$ permits $\beta$ closer to $1/2$, but increases sensitivity to value
noise. No fixed positive probe radius permits $\beta=1/2$ on the entire class
$\CMP$. Quadratics are more structured:
\begin{proposition}[polarisation]\label{prop:polarization}
For any quadratic and any $h>0$,
\begin{equation}\label{eq:polarization}
 f(c+he_i)-f(c-he_i)=2h\,\partial_if(c).
\end{equation}
Thus comparison and sign trajectories coincide in exact arithmetic with
matched tie rules. Their quadratic convergence criterion is
$\theta_w(H)\le2\beta-1$, independent of $q>0$.
\end{proposition}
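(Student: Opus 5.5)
The identity \eqref{eq:polarization} comes from a direct Taylor expansion, and the trajectory claims then follow by reading off signs and invoking Theorem~\ref{th:quad-iff}. For $f(x)=\tfrac12x^\top Hx+g^\top x+\gamma$ with $H$ symmetric, $f$ has no terms beyond second order. Hence
\[
 f(c\pm he_i)=f(c)\pm h\,\partial_if(c)+\tfrac12h^2H_{ii}.
\]
Subtracting the two expansions cancels the even-order terms, leaving $2h\,\partial_if(c)$. This holds for every $h>0$, whatever the sign of $H_{ii}$.

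Apply this with $h=qr_{k,i}>0$ at the centre $c_k$. Then $\operatorname{cmp}(c_k+qr_{k,i}e_i,\,c_k-qr_{k,i}e_i)$ equals $\sgn\partial_if(c_k)$ whenever the derivative is nonzero. Equality of the two values occurs exactly when $\partial_if(c_k)=0$. In that case the comparison oracle may return either binary answer, just as the sign oracle's zero may be resolved either way.

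Call the tie rules matched if, given the same history, both resolve these coincident ties identically. An induction on $k$ then shows the two algorithms produce the same $s_k$. Rule~\eqref{eq:compare-rule} updates the centre by $-(1-\beta)r_{k,i}s_{k,i}$ and contracts the radius by $\beta$, which is precisely Algorithm~\ref{alg:cube}. So the centres coincide at every step. The one point to check is that the tie-rule classes correspond bijectively. Each rule depends only on query points, answers and previous choices, and the comparison history determines the sign history and conversely, because the derivative sign is recoverable from the comparison answer away from ties.

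For the criterion, note first that $q$ enters rule~\eqref{eq:compare-rule} only through $h$, and $h$ disappears from every sign. The set of comparison trajectories, taken over all tie rules, therefore equals the set of sign trajectories of \CUBE$(\beta,w)$, for every $q>0$. Theorem~\ref{th:quad-iff} then applies directly. Universal convergence from every box containing $x^*$ and for every tie rule holds if and only if $\theta_w(H)\le2\beta-1$, independently of $q$.

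The main subtlety is that Theorem~\ref{th:quad-iff} assumes a positive diagonal. Its necessity direction takes a row with $\theta_i>2\beta-1$ and uses a nonzero derivative in the offending coordinate, so no tie is involved and the matching of tie rules plays no role there. Sufficiency transfers because every comparison run is a sign run. Finally, $\theta_w(H)$ is defined only when $H$ has a positive diagonal, which the criterion presupposes. So the clause ``any quadratic'' applies to \eqref{eq:polarization}, while the convergence criterion is stated for $H_{ii}>0$.
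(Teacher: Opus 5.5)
Your proof is correct and follows the paper's route. The paper's proof only expands the quadratic form and notes that the linear terms give the identity while the $h^2H_{ii}/2$ terms cancel; your tie-matching induction and the transfer of both directions of Theorem~\ref{th:quad-iff} correctly fill in what the paper leaves implicit. One wording point needs fixing: a binary comparison answer does not reveal whether a tie occurred, so the comparison history does not literally determine the three-valued sign history. This does not affect your bijection of trajectories, because for a fixed $f$ the signs are functions of the query points, which both histories contain.
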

Comparisons also survive every strictly increasing transformation of the
values, even a nondifferentiable one. For derivative signs, the corresponding
chain-rule statement needs $\varphi'>0$.

\subsection{The information cost}
\begin{theorem}[comparison lower bound]\label{th:cmp-lower}
Any deterministic algorithm guaranteed to localise every translated spherical
quadratic whose target lies in the initial box $B_0$ to weighted accuracy
$0<\eps<r_0$ needs at least
\[
 N\ge n\log_2(r_0/\eps)
\]
binary comparisons in the worst case. A randomised algorithm with a fixed
budget and success probability at least $1-\delta$ on every target satisfies
$N\ge n\log_2(r_0/\eps)+\log_2(1-\delta)$.
\end{theorem}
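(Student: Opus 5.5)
The proof is an adversary (counting) argument restricted to translated spherical quadratics $f(x)=\tfrac12\|x-x^*\|_2^2$, so that each binary comparison $\operatorname{cmp}(a,b)$ reduces to a halfspace test on $x^*$. Indeed,
\[
 f(a)>f(b)\iff \ip{b-a}{x^*}>\tfrac12(\|b\|^2-\|a\|^2).
\]
Each answer therefore restricts the feasible target set to a halfspace, and ties can be resolved arbitrarily. The plan is to replace geometric reasoning by a pure volume/counting argument in the spirit of the one-dimensional argument after Theorem~\ref{th:lower}, but over all $n$ coordinates at once.

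\textbf{Deterministic bound.} Fix $\eps<r_0$ and discretise the box by a grid of candidate targets. In each coordinate choose $m=\lfloor r_0/\eps\rfloor$ points, or more precisely any number $m$ with pairwise weighted separation strictly greater than $2\eps$ that fits in $[c_{0,i}-r_0w_i,c_{0,i}+r_0w_i]$. This gives $m^n$ targets, and any two are at weighted $\infty$-distance $>2\eps$, so no single output can be within $\eps$ of two of them. A deterministic algorithm with $N$ comparisons is a binary decision tree with at most $2^N$ leaves, each leaf fixing one output. Hence $2^N\ge m^n$, and $N\ge n\log_2 m$.

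\textbf{Main obstacle.} The difficulty is the constant: the statement requires exactly $n\log_2(r_0/\eps)$, not $n\log_2\lfloor r_0/\eps\rfloor$. A finite packing only gives about $r_0/\eps$ points per coordinate, up to floors, and I expect this to be the main issue. My first attempt is a volume argument instead of a packing. Take the target uniformly distributed on $B_0$. Each leaf's output succeeds only on targets in the weighted $\eps$-box around it, whose volume is a fraction $(\eps/r_0)^n$ of $\operatorname{vol}(B_0)$. Since the leaves partition the set of all targets (ties resolved by a fixed rule), worst-case success on all targets forces the leaves' success sets to cover $B_0$. This gives
\[
 2^N(\eps/r_0)^n\ge1,
\]
which is the exact bound with no floors.

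\textbf{Randomised bound.} A randomised algorithm with a fixed budget is a distribution over deterministic trees with $N$ comparisons. Success probability at least $1-\delta$ on every target implies success probability at least $1-\delta$ on average over the uniform prior. By averaging, some deterministic tree in the support succeeds on a set of targets of measure at least $(1-\delta)\operatorname{vol}(B_0)$. That set is covered by at most $2^N$ boxes of relative volume $(\eps/r_0)^n$, so
\[
 2^N(\eps/r_0)^n\ge1-\delta,
\]
which rearranges to $N\ge n\log_2(r_0/\eps)+\log_2(1-\delta)$.

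Measurability of the leaf regions is immediate, since they are finite intersections of halfspaces and their complements. The boundary issue of ties lies on a set of measure zero and does not affect the volume argument.
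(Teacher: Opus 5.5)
Your final argument is correct and is essentially the paper's proof: at most $2^N$ leaves, each leaf's output succeeding on at most a fraction $(\eps/r_0)^n$ of $B_0$, and averaging over random seeds under the uniform prior for the randomised bound (the packing detour is unnecessary). One small correction: your closing claim that ties lie on a measure-zero set fails for degenerate queries such as $\operatorname{cmp}(a,a)$. You do not need it, because a fixed tie rule already makes the leaves partition the targets.
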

A depth-$N$ decision tree has at most $2^N$ leaves. Each output can cover at
most a fraction $(\eps/r_0)^n$ of the initial target volume; averaging the
same argument over algorithmic randomness proves the second statement.
This gives $\Theta(n\log(r_0/\eps))$ comparisons on $\CMP$ for the coupled
rule and exact leading constant on separable quadratics with $\beta=1/2$.
There is no conflict with the sign-vector lower bound: one sign vector contains
$n$ coordinate answers. The cost in raw function values is twice the number
of comparisons for the direct two-value implementation.

\subsection{One model for geometric and numerical uncertainty}\label{sec:inexact}
Suppose the chosen sign is correct whenever
\begin{equation}\label{eq:inexact-cond}
 |u_{k,i}|>\theta M_k+\eta_k,\qquad \eta_k\ge0.
\end{equation}
Here $\theta$ measures geometric uncertainty and $\eta_k$ is an absolute
error band in weighted coordinate units. Put
$\Delta=2\beta-1-\theta$.
\begin{theorem}[retention margin]\label{th:inexact}
If $0\le\theta\le2\beta-1$ and
$\eta_k\le\Delta R_k$ at each step, then $x^*$ remains in every box and
$M_k\le R_k$ for every admissible sequence of oracle answers.
\end{theorem}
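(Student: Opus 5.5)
The plan is induction on $k$, proving the coordinatewise statement $|u_{k,i}|\le R_k$ for every $i$. This gives both $x^*\in\Bx(c_k,r_k)$ and $M_k\le R_k$. The base case is the hypothesis $x^*\in B_0$, i.e.\ $M_0\le r_0=R_0$. The argument follows the retention proof of Theorem~\ref{th:csc}, with the certified-correct region enlarged by the band $\theta M_k+\eta_k$, exactly as in the scalar estimate~\eqref{eq:scalar}.

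For the inductive step, assume $M_k\le R_k$ and fix a coordinate $i$. By~\eqref{eq:err-rec}, in normalised units the update is $u_{k+1,i}=u_{k,i}-(1-\beta)R_ks_{k,i}$. Split into two cases according to~\eqref{eq:inexact-cond}.

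\emph{Certified case}, $|u_{k,i}|>\theta M_k+\eta_k$. Here the sign is correct, $s_{k,i}=\sgn u_{k,i}$, and $u_{k,i}\ne0$. Hence
\[
|u_{k+1,i}|=\bigl||u_{k,i}|-(1-\beta)R_k\bigr|\le\max\{|u_{k,i}|-(1-\beta)R_k,\,(1-\beta)R_k\}.
\]
Since $|u_{k,i}|\le M_k\le R_k$ and $1-\beta\le\beta$, this is at most $\beta R_k=R_{k+1}$.

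\emph{Uncertain case}, $|u_{k,i}|\le\theta M_k+\eta_k$. This case includes $u_{k,i}=0$ and every possible tie. Any sign may be returned, so the triangle inequality gives
\[
|u_{k+1,i}|\le|u_{k,i}|+(1-\beta)R_k\le\theta R_k+\Delta R_k+(1-\beta)R_k.
\]
With $\Delta=2\beta-1-\theta$, the right-hand side equals $(2\beta-1+1-\beta)R_k=\beta R_k=R_{k+1}$. Here we use $M_k\le R_k$ and $\eta_k\le\Delta R_k$. The hypothesis $\theta\le2\beta-1$ is exactly what makes $\Delta\ge0$, so that the assumption $\eta_k\ge0$, $\eta_k\le\Delta R_k$ is consistent.

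Taking the maximum over $i$ gives $M_{k+1}\le R_{k+1}$, which closes the induction. Retention of $x^*$ in $\Bx(c_{k+1},r_{k+1})$ is the coordinatewise form of the same bound, because $r_{k+1,i}=R_{k+1}w_i$. Nestedness comes from Lemma~\ref{lem:nested}.

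The only delicate point is being careful that the uncertain case must use $M_k$, not $|u_{k,i}|$, in the band, and that the bound holds for every admissible oracle answer, including adversarial ones inside the band. The triangle-inequality bound does not depend on the sign chosen, which settles this. No step looks like a genuine obstacle; the computation $\theta+\Delta+(1-\beta)=\beta$ is the whole content.
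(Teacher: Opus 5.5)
Your proof is correct and follows the paper's argument. The paper also runs an induction on $M_k\le R_k$: inside the band it bounds $\theta R_k+\eta_k\le(2\beta-1)R_k$ and adds one step $(1-\beta)R_k$ to reach $\beta R_k$, and outside the band it uses the retention estimate of Theorem~\ref{th:csc}. Splitting by certified versus uncertain region, as you do, rather than by wrong versus correct step is only a cosmetic difference.
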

The uncertain region has radius at most
$\theta R_k+\eta_k\le(2\beta-1)R_k$, exactly the overlap protection.
A positive persistent error cannot satisfy this condition indefinitely.
It does not, however, make the rest of the trajectory unanalysable.

\begin{theorem}[finite-time error and sharp persistent floor]\label{th:noise}
Under~\eqref{eq:inexact-cond}, assume $M_0\le r_0$ and
$0\le\theta\le2\beta-1$. Define
\begin{equation}\label{eq:noise-rec}
 d_0=0,\qquad d_{k+1}=\max\{d_k,\ \theta d_k+\eta_k-\Delta R_k\}.
\end{equation}
Then for $K\ge1$,
\begin{equation}\label{eq:noise-finite}
 M_K\le R_K+d_K\le R_K+
 \frac{\max_{0\le j<K}[\eta_j-\Delta R_j]_+}{1-\theta}.
\end{equation}
If $\eta_k\le\bar\eta$, then
\begin{equation}\label{eq:noise-limit}
 M_\infty\le\frac{\bar\eta}{1-\theta}.
\end{equation}
The coefficient $1/(1-\theta)$ is sharp already in one dimension for this
band model.
\end{theorem}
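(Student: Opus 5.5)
The plan is to prove the pointwise invariant
\[
 |u_{k,i}|\le R_k+d_k\qquad\text{for all }i,\ k\ge0,
\]
by induction on $k$. This gives $M_K\le R_K+d_K$ at once. The base case is $M_0\le r_0=R_0$ with $d_0=0$. For the step, fix a coordinate $i$ with $|u_{k,i}|\le R_k+d_k$ and use~\eqref{eq:err-rec} in weighted units, where the displacement is $(1-\beta)R_k$. There are three cases.

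\emph{Correct sign, or $u_{k,i}=0$.} The new error is $\bigl||u_{k,i}|-(1-\beta)R_k\bigr|$. This is at most $\max\{\beta R_k+d_k,(1-\beta)R_k\}$, hence at most $R_{k+1}+d_k\le R_{k+1}+d_{k+1}$, using $\beta\ge1/2$ and the first branch of~\eqref{eq:noise-rec}.

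\emph{Wrong sign.} By~\eqref{eq:inexact-cond} a wrong sign forces $|u_{k,i}|\le\theta M_k+\eta_k\le\theta(R_k+d_k)+\eta_k$. The step adds $(1-\beta)R_k$, so the new error is at most $(1-\beta+\theta)R_k+\theta d_k+\eta_k$. Since $1-\beta+\theta=\beta-\Delta$, this equals
\[
 R_{k+1}+(\theta d_k+\eta_k-\Delta R_k)\le R_{k+1}+d_{k+1},
\]
by the second branch of~\eqref{eq:noise-rec}. This bookkeeping is the heart of the argument, and it explains why $\Delta$ is defined as it is.

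For the explicit bound in~\eqref{eq:noise-finite}, put $D_K=\max_{0\le j<K}[\eta_j-\Delta R_j]_+$. I claim $d_k\le D_K/(1-\theta)$ for all $k\le K$, again by induction. The first branch is immediate. For the second branch,
\[
 \theta d_k+\eta_k-\Delta R_k\le \frac{\theta D_K}{1-\theta}+D_K=\frac{D_K}{1-\theta}.
\]
For the limit~\eqref{eq:noise-limit}, use $[\eta_j-\Delta R_j]_+\le\bar\eta$ to get $d_K\le\bar\eta/(1-\theta)$ uniformly in $K$. Centres converge by Lemma~\ref{lem:nested} and $R_K\to0$, so $M_\infty=\lim_K M_K\le\bar\eta/(1-\theta)$.

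Sharpness is the step I expect to need the most care, since the statement concerns admissible oracle answers in the band model rather than a specific function. Take $n=1$, so $M_k=|u_k|$. A wrong answer is then admissible exactly when $|u_k|\le\theta|u_k|+\bar\eta$, that is, when $|u_k|\le a:=\bar\eta/(1-\theta)$.

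I would choose $r_0\ge a$, place the target so that $u_0=a$, and use the adversary policy "answer wrongly when $|u_k|\le a$, correctly otherwise". Every answer is then admissible.

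Under this policy the error is pushed up whenever it is at most $a$ and pulled down whenever it exceeds $a$. The step lengths $(1-\beta)R_k$ tend to zero. The error never changes sign: a correct step from $|u_k|>a$ overshoots zero only if $(1-\beta)R_k>|u_k|+\dots$, which is excluded once $(1-\beta)R_k<a$, and that can be arranged from the start by taking $\beta$ close to $1$ or by checking the first few steps. Hence $|u_k|$ stays within $(1-\beta)R_{k-1}$ of $a$, and the limit is exactly $a=\bar\eta/(1-\theta)$.

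Letting $\theta$ range over $[0,2\beta-1]$ shows that the factor $1/(1-\theta)$ cannot be reduced. The only remaining detail is the boundary equality in the band condition, which is harmless because that condition is a strict inequality.
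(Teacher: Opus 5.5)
Your proof is correct and follows the paper's route: the paper derives $M_{k+1}\le\max\{M_k-h_k,\ \theta M_k+\eta_k+h_k\}$ and substitutes $M_k\le R_k+d_k$, which is the same bookkeeping as your coordinatewise case split, and its sharpness witness is your one-dimensional instance ($x^*=0$, $c_0=r_0=p=\bar\eta/(1-\theta)$, with the biased oracle $\sgn(x-p)$, which errs only inside the band). The one loose point is how you rule out overshoot: $\beta$ is fixed, so do not take it close to $1$; instead take $r_0=a$, which gives $(1-\beta)r_0\le a/2<a$, or use the paper's oracle $\sgn(u-a)$, which makes the run exact sign splitting aimed at $a$, so that Theorem~\ref{th:csc} gives $|u_k-a|\le R_k$ directly.
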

Substitution of $M_k\le R_k+d_k$ into
$M_{k+1}\le\max\{M_k-h_k,\theta M_k+\eta_k+h_k\}$ proves the recursion.
For sharpness, $f(x)=x^2/2$ with the biased sign oracle $\sgn(x-p)$,
$c_0=r_0=p>0$ and $\eta=(1-\theta)p$ satisfies the band condition but
converges to $p$, not to $0$. This is a worst-case floor, not an assertion that
every noisy trajectory attains it. Full proofs are in
Appendix~\ref{app:comparison}.

\subsection{Converting oracle error to a coordinate band}
\begin{proposition}[a quantitative sign-growth condition]\label{prop:eta}
Suppose, for $a_i>0$,
\begin{equation}\label{eq:growth}
 \sgn(x_i-x_i^*)\partial_if(x)\ge
 a_iw_i\left(\frac{|x_i-x_i^*|}{w_i}-\theta\ninfw{x-x^*}\right).
\end{equation}
If a derivative estimate has absolute component error at most $b_{k,i}$, then
\eqref{eq:inexact-cond} holds with
$\eta_k=\max_i b_{k,i}/(a_iw_i)$.
For a quadratic, one may take $a_i=H_{ii}$ and $\theta=\theta_w(H)$.
\end{proposition}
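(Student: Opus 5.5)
The plan is to prove the contrapositive directly from the growth inequality~\eqref{eq:growth} and the error bound on the estimate. Let $\tilde g_{k,i}$ be the estimate of $\partial_if(c_k)$, with $|\tilde g_{k,i}-\partial_if(c_k)|\le b_{k,i}$, and let the chosen sign be $\sgn\tilde g_{k,i}$, with zeros resolved arbitrarily. Fix a coordinate $i$ with $|u_{k,i}|>\theta M_k+\eta_k$ and $\eta_k=\max_j b_{k,j}/(a_jw_j)$. The left-hand side is positive, so $c_{k,i}\ne x_i^*$. Applying~\eqref{eq:growth} at $x=c_k$, where $|c_{k,i}-x_i^*|/w_i=|u_{k,i}|$ and $\ninfw{c_k-x^*}=M_k$, gives
\[
 \sgn(c_{k,i}-x_i^*)\,\partial_if(c_k)\ \ge\ a_iw_i\bigl(|u_{k,i}|-\theta M_k\bigr)\ >\ a_iw_i\eta_k\ \ge\ b_{k,i}.
\]
Hence the true derivative has the correct sign with magnitude strictly exceeding the error bound. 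Then $\sgn(c_{k,i}-x_i^*)\tilde g_{k,i}\ge \sgn(c_{k,i}-x_i^*)\partial_if(c_k)-b_{k,i}>0$. The estimate is therefore nonzero, no tie rule intervenes, and its sign equals $\sgn(c_{k,i}-x_i^*)$. This is exactly the correctness required in~\eqref{eq:inexact-cond}. One detail to check is that $a_iw_i\eta_k\ge b_{k,i}$ follows from the definition of $\eta_k$ as a maximum over coordinates. Strictness comes from the strict hypothesis $|u_{k,i}|>\theta M_k+\eta_k$.

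For the quadratic statement, write $\partial_if(x)=H_{ii}(x_i-x_i^*)+\sum_{j\ne i}H_{ij}(x_j-x_j^*)$. Multiplying by $\sgn(x_i-x_i^*)$ gives the lower bound $H_{ii}|x_i-x_i^*|-\sum_{j\ne i}|H_{ij}||x_j-x_j^*|$. Bound $|x_j-x_j^*|\le w_j\ninfw{x-x^*}$, and then use $\sum_{j\ne i}|H_{ij}|w_j=H_{ii}(Aw)_i\le H_{ii}w_i\theta_w(H)$, which follows from the definition of $\theta_w$. This yields~\eqref{eq:growth} with $a_i=H_{ii}$ and $\theta=\theta_w(H)$, in the same way as in the proof of Theorem~\ref{th:hess}.

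I expect no real obstacle here. The only point needing care is the coordinate where $x_i=x_i^*$: there $\sgn(x_i-x_i^*)=0$, so the left side of~\eqref{eq:growth} is zero while the right side is $-a_iw_i\theta\ninfw{x-x^*}\le0$, and the inequality still holds. In any case this coordinate never meets the hypothesis of~\eqref{eq:inexact-cond}, so it plays no role in the argument.
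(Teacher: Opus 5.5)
Your proof is correct and follows the paper's argument essentially line by line. Applying \eqref{eq:growth} at $c_k$ shows that the true derivative exceeds $a_iw_i\eta_k\ge b_{k,i}$ in the direction of $c_{k,i}-x_i^*$, so the estimate has the same nonzero sign; the quadratic case follows by bounding the cross terms through $\sum_{j\ne i}|H_{ij}|w_j\le H_{ii}w_i\theta_w(H)$, and your remark that \eqref{eq:growth} holds trivially when $x_i=x_i^*$ also appears in the paper's proof.
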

For quadratics this is an \emph{inequality} obtained by bounding each row defect
by the maximum row defect; equality need not hold. The condition is stronger
than membership in $\CSC$, where a correct derivative may be arbitrarily flat.

\begin{corollary}[central differences and noisy values]\label{cor:fd}
If the derivative of the $i$-th coordinate section is $L_i$-Lipschitz on the
probe segment and each value has absolute error at most $\nu$, then
\[
 \left|\frac{\widehat f(c+he_i)-\widehat f(c-he_i)}{2h}
              -\partial_if(c)\right|\le\frac{L_ih}{2}+\frac{\nu}{h}.
\]
For $L_i>0$ and $\nu>0$, the best radius for this bound is
$h_i=\sqrt{2\nu/L_i}$, giving $b_i=\sqrt{2L_i\nu}$.
Provided all probes are available and~\eqref{eq:growth} holds, retention is
certified while
\[
 R_k\ge\frac1\Delta\max_i\frac{\sqrt{2L_i\nu}}{a_iw_i}
 \quad(\Delta>0),
\]
and the persistent floor is at most
$(1-\theta)^{-1}\max_i\sqrt{2L_i\nu}/(a_iw_i)$.
\end{corollary}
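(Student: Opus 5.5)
The corollary has three parts: a finite-difference error estimate, an optimal choice of the probe radius, and a chain that turns a derivative error into a retention certificate and a floor. I would handle them in that order.

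\emph{Step 1 (error bound).} Split the error of the noisy central difference into a truncation term and a noise term. For the noise term, $|\widehat f(c\pm he_i)-f(c\pm he_i)|\le\nu$ at both probes. The difference of the errors is therefore at most $2\nu$, which contributes $\nu/h$ after dividing by $2h$.

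For the truncation term, write $\varphi(t)=f(c+te_i)$. Then
\[
\varphi(h)-\varphi(-h)-2h\varphi'(0)=\int_0^h\bigl(\varphi'(t)-\varphi'(0)\bigr)\,dt+\int_0^h\bigl(\varphi'(-t)-\varphi'(0)\bigr)\,dt .
\]
Each integrand is bounded by $L_it$, so the absolute value is at most $L_ih^2$. Dividing by $2h$ gives $L_ih/2$. The triangle inequality then yields the displayed bound.

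\emph{Step 2 (best radius).} Minimise $g(h)=L_ih/2+\nu/h$ over $h>0$. Since $g$ is strictly convex, setting $g'(h)=L_i/2-\nu/h^2=0$ gives $h_i=\sqrt{2\nu/L_i}$. The two terms are then equal, each being $\sqrt{L_i\nu/2}$, so the minimum value is $b_i=2\sqrt{L_i\nu/2}=\sqrt{2L_i\nu}$.

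\emph{Step 3 (certificates).} Apply Proposition~\ref{prop:eta} with $b_{k,i}=\sqrt{2L_i\nu}$. Because~\eqref{eq:growth} holds by hypothesis, condition~\eqref{eq:inexact-cond} holds with the constant band $\eta_k=\bar\eta:=\max_i\sqrt{2L_i\nu}/(a_iw_i)$. Two consequences follow.
\begin{itemize}
\item \textbf{Retention.} Theorem~\ref{th:inexact} requires $\eta_k\le\Delta R_k$, which is exactly $R_k\ge\bar\eta/\Delta$ when $\Delta>0$.
\item \textbf{Floor.} Theorem~\ref{th:noise}, via~\eqref{eq:noise-limit}, gives $M_\infty\le\bar\eta/(1-\theta)$, the stated persistent floor.
\end{itemize}

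\emph{Main obstacle: the interaction of the certificate with the steps before it.} Retention is certified only at steps with $R_k$ above the threshold. Once $R_k$ is smaller, Theorem~\ref{th:inexact} no longer applies directly. I would argue by induction on $k$: the hypotheses of Theorem~\ref{th:inexact} are one-step conditions, so they certify retention for every step in the initial segment where $R_k\ge\bar\eta/\Delta$.

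The floor needs no such restriction, since Theorem~\ref{th:noise} allows arbitrary bounded $\eta_k$. The phrase ``provided all probes are available'' is what keeps the Lipschitz hypothesis and the $\nu$-bound valid at each probe segment.

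A further subtlety is whether the optimal $h_i$ must be compatible with the box. I would note that the corollary fixes $h_i$ independently of $k$, which is why it assumes availability rather than containment in the current box.
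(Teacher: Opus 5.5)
Your proposal is correct and follows the paper's argument essentially step for step. The paper uses the same ingredients: the fundamental-theorem-of-calculus truncation bound (your two integrals over $[0,h]$ are the paper's $\frac{1}{2h}\int_{-h}^{h}L_i|t|\,dt$), the $\nu/h$ noise term, elementary minimisation of $L_ih/2+\nu/h$, and then Proposition~\ref{prop:eta} feeding Theorems~\ref{th:inexact} and~\ref{th:noise}, with the floor tacitly using the standing hypotheses $0\le\theta\le2\beta-1$ and $M_0\le r_0$ of Theorem~\ref{th:noise}.
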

This optimised fixed probe radius is not the coupled radius
$(1-\beta)r_{k,i}$. With that shrinking radius, the term $\nu/h$ grows; it
cannot be ignored. A comparison perturbed by an additive error bounded by
$\omega$ in the value difference contributes $\omega/(2h)$ instead.
For exact values, difference bias is proportional to $h$, but retention still
requires its coefficient to satisfy the margin condition. Quadratic difference
bias is identically zero by polarisation; floating-point cancellation is not.

High-probability derivative bounds transfer to the same high-probability
localisation statement \emph{only when the resulting bands also satisfy the
margin}. To control a run of $K$ steps, one may allocate failure probability
$\delta/(nK)$ to each coordinate estimate. For independent estimates of fixed
variance, attaining errors of order $R_k$ typically requires sample sizes of
order $R_k^{-2}$, so logarithmically many geometric iterations do not imply
logarithmic total sampling cost.

\section{Beyond deterministic signs}\label{sec:extensions}

The geometric principle survives several changes to the oracle. Their
hypotheses and costs must remain visible: skipping an uncertain sign,
averaging gradients, repeating a noisy answer and restarting an algorithm
are not interchangeable operations. Table~\ref{tab:mods} collects the main
guarantees; Appendix~\ref{app:mods} contains their definitions and proofs.

\begin{table}[htbp]
\caption{Extensions and their limitations. $K=\lceil\log(r_0/\eps)/\log(1/\beta)\rceil$;
$m_i(k)$ is the number of updates of coordinate $i$ before step $k$.}
\label{tab:mods}
\small\renewcommand{\arraystretch}{1.12}
\begin{tabularx}{\textwidth}{@{}P{20mm}LP{31mm}@{}}
\toprule
Rule & Main guarantee & Extra information or cost\\
\midrule
Overlap & For $\theta\le2\beta-1$, $M_k\le\beta^kr_0$.
For $\theta<\beta$, also
$M_k\le R_k+\lambda(r_0-R_k)$,
$\lambda=[(\theta+1-2\beta)/(1-\beta)]_+$ &
$K\le\lceil2\log(r_0/\eps)/(1-\theta)\rceil$ at $\beta=(1+\theta)/2$\\
\addlinespace
Freezing & Correct signs on the active set imply
$M_k\le\beta^{\min_i m_i(k)}r_0$ &
Gradient magnitudes; an activation frequency is needed for a linear rate\\
\addlinespace
Smoothing & If $f_\tau\in\CSC(x^*_\tau)$,
$\|c_k-x^*\|_\infty\le\|w\|_\infty R_k+\|x^*_\tau-x^*\|_\infty$ &
One exact smoothed gradient, or $m$ sampled gradients per step\\
\addlinespace
Majority & With $p_m\le e^{-2\gamma^2m}$,
$\E M_\infty\le2r_0\min\{1,np_m\}$ &
$m$ independent vector queries; fixed positive correctness margin $\gamma$\\
\addlinespace
Multistart & Some run succeeds with probability $1-(1-\pi)^M$ &
$M$ runs; success-basin mass $\pi$ is not known automatically\\
\bottomrule
\end{tabularx}
\end{table}

\paragraph{Freezing does not prove that a coordinate is safe.}
For a threshold $0<\epsilon\le1$, update only coordinates with
$|\partial_if(c_k)|\ge\epsilon\|\nabla f(c_k)\|_\infty$, and keep both the
centre and half-size of every other coordinate unchanged. The defining
condition of $\DSC_\epsilon$ requires sign consistency only on these active
coordinates, away from their target values. Retention follows by induction.
Without an assumption such as $m_i(k)\ge\nu k$ for all $i$, the resulting
bound does not imply a uniform linear rate. Empirical success of a threshold
is not a certificate that the function belongs to $\DSC_\epsilon$.

\paragraph{Smoothing changes the function and the cost.}
For uniform $u\in[-1,1]^n$, use a fixed radius in
$f_\tau(x)=\E f(x+\tau u)$. For the translated multimodal family
\[
 f(x)=\|x-x^*\|_2^2+A\sum_i[1-\cos(2\pi(x_i-x_i^*))],
\]
its smoothed derivative is $2t+c\sin(2\pi t)$ in each coordinate, where
\[c=2\pi A\,\operatorname{sinc}(2\pi\tau),\]
and
$\operatorname{sinc}(z)=\sin(z)/z$, continuously extended at zero.
It belongs to $\CSC$ exactly when
\[
 -1/\pi\le c<c_+,\qquad
 c_+=\min_{1/2<t<1}\frac{2t}{-\sin(2\pi t)}\approx1.465.
\]
This covers functions with exponentially many isolated local minima before
smoothing. The upper endpoint is strict; the lower endpoint is included.
Exact smoothing, Monte Carlo approximation and a radius shrinking with $k$
are three different oracles. A special two-gradient cancellation for this
family is discussed in Remark~\ref{rem:twoquery}; it is not a general-purpose
global optimisation oracle.

\paragraph{Noise accumulates over coordinates as well as time.}
If a coordinate answer is independently correct with conditional probability
at least $1/2+\gamma$, an odd majority of size
$m\ge\log(nK/\delta)/(2\gamma^2)$ gives $M_K\le R_K$ with probability at
least $1-\delta$. The expected per-coordinate limiting error is at most
$2p_mr_0$, but the maximum norm needs the factor $n$ in Table~\ref{tab:mods}.
With conditional independence across coordinates the sharper bound is
$2r_0[1-(1-p_m)^n]$. Add $R_K$ for finite $K$. Already for $n=3$, $p=0.1$,
one wrong first-step sign implies
$\E M_\infty\ge1-0.9^3=0.271>2p=0.2$ when $r_0=1$.
A fixed $\gamma>0$ is a noise-model assumption, not a consequence of a fixed
variance of additive gradient or value noise near a minimiser.

\paragraph{Random starts and random matrices have different quantifiers.}
If successful starting centres occupy fraction $\pi$ of a sampling region,
$M\ge\pi^{-1}\log(1/\delta)$ independent starts give at least one success
with probability $1-\delta$. Selecting the lowest final objective preserves a
function-value guarantee, but not an argument-distance guarantee without a
growth condition. In high dimensions a small basin can make $\pi$ exponentially
small.

A separate probabilistic question is whether a random Hessian admits a
\emph{universal} fixed geometry. For
$H=I+\sigma W/\sqrt n$, where $\sigma\ge0$ and $W$ is a Gaussian orthogonal ensemble matrix
with off-diagonal variance $1$ and diagonal variance $2$, put
\[
 \ell=\log(4n/\delta),\quad a=\sqrt{2(n-1)\ell},\quad
 b=2\sigma\sqrt{\ell/n},\quad m_0=\sqrt{2/\pi}.
\]
If $b<1$, then with probability at least $1-\delta$ all diagonal entries are
positive and
\begin{equation}\label{eq:main-GOE}
 \frac{\sigma[(n-1)m_0-a]_+}{\sqrt n(1+b)}
 \le\rho(A)\le
 \frac{\sigma[(n-1)m_0+a]}{\sqrt n(1-b)}.
\end{equation}
An upper bound below $1$ certifies both positive definiteness and the existence
of a safe geometry; a lower bound above $1$ rules out any universal fixed
geometry. If $0<\sigma=O(n^{-1/2})$ and $\log(n/\delta)=o(n)$, these bounds
identify the scale $\rho(A)\sim\sigma\sqrt{2n/\pi}$. For a Gaussian-weighted
$d$-regular graph with unit diagonal, the analogous bounds are
$\sigma[dm_0\pm\sqrt{2d\log(2n/\delta)}]/\sqrt d$ (truncate the lower bound
at zero); their relative error vanishes only if $\log(n/\delta)=o(d)$.
A deterministic maximum degree yields only a one-sided row-sum bound.
A star has spectral scale $\sigma$, not $\sigma\sqrt d$, illustrating why the
regularity assumption matters. The full statements and proofs are in
Appendix~\ref{app:random}.

\section{Controlled numerical experiments}\label{sec:experiments}

The experiments answer three different questions: whether the implementation
respects the proved invariants, how the irreversible mechanism appears in
concrete examples, and how selected finite-budget runs compare with reference
methods. They are not a search for a universal winning optimiser. Complete
instance data, trajectories, seeds and scripts are supplied in Online Resource~1.
Further experiments are in Appendix~\ref{app:experiments}; the execution
protocol is in Appendix~\ref{app:repro}.

\subsection{What is measured}
All publication experiments use double precision unless explicitly labelled
as exact rational computations. Random cases use independent, case-addressed
streams derived from the seed $20260919$, so adding a different case does not
change an existing instance. Within a comparison, methods share the target,
starting centre and external error metric. A fixed-aspect box is initialised
with $r_0=\max_i|c_{0,i}-x_i^*|/w_i$ unless a larger radius is stated.
This target-dependent initialisation is a controlled-test device, not a rule
available to an optimiser with an unknown solution.

The primary plotted error is the unweighted distance divided by the common
initial unweighted error. Weighted errors are used only for testing a weighted
theorem, and are explicitly identified. Each basic iteration uses one gradient
or sign-vector query; each comparison iteration uses $n$ binary comparisons,
or $2n$ raw values. Matrix setup, products $Ar_k$, repeated votes, Monte Carlo
samples and restarts are not silently counted as free gradients. Logarithmic
plots display errors smaller than $10^{-16}$ at $10^{-16}$; the saved data
are not floored. Tiny tails below floating-point resolution have no empirical
asymptotic interpretation.

\subsection{Algebraic and adversarial checks}
The verification suite checks the statements before testing performance.
Its exact rational part covers the error-account identity, the full
backward-constructed envelope trajectories, quadratic polarisation using
actual function values, and the sharp probe boundary. The floating-point part
checks fixed and adaptive retention, nestedness, the time-varying error band,
Perron weights for disconnected and bipartite graphs, and the capped limiting
half-sizes. Table~\ref{tab:verification} gives the executed counts. Each case
contains multiple steps; the rows are different tests, not independent
statistical observations of a common success probability.

\begin{table}[htbp]
\caption{Reproducible verification suite. All listed tests passed. Exact checks
use rational arithmetic; numerical tolerances and maximum observed discrepancies
are recorded in \texttt{verification.json}.}
\label{tab:verification}
\begin{tabularx}{\textwidth}{@{}LrP{30mm}@{}}
\toprule
Property & Cases & Arithmetic\\
\midrule
Coordinatewise error account & 2000 & Exact\\
Entire envelope trajectory & 120 & Exact\\
Quadratic polarisation & 120 & Exact\\
Probe threshold and boundary & 80 & Exact\\
Fixed quadratic retention & 600 & Double precision\\
Adaptive retention, nestedness and rate & 600 & Double precision\\
Time-varying noise band & 800 & Double precision\\
Capped noisy limit and finite-time rate & 320 & Double precision\\
Perron components and bipartite cases & 200 & Double precision\\
Late switch, reference step and input validation & 6 & Double precision\\
\bottomrule
\end{tabularx}
\end{table}

Figure~\ref{fig:theta} is stronger than an approximate agreement of two
curves: its coordinates and signs are rational, and $M_k=b_k$ is verified
exactly. No numerical integration of the potential is needed to evaluate its
analytically established sign field. Conversely, the scalar noise-band tests
use admissible abstract sign sequences; they are not presented as gradients
of newly constructed functions. The smooth attainability theorem and its
proof supply integrability where that claim is made.

\subsection{A matched comparison inside and outside the exact class}
We use two $50$-dimensional positive definite quadratics, with $200$ gradient
evaluations per method and a common target sampled uniformly from
$[-0.6,0.6]^{50}$. The first Hessian is diagonal with entries geometrically
spaced from $1$ to $1000$. The second is a dense symmetric perturbation of the
identity; its precise generating formula and saved matrix appear in
Appendix~\ref{app:repro}. Both start at $c_0=0$.

Reference parameters are fixed in advance: gradient descent uses $1/L$ with
$L=\lambda_{\max}(H)$; Adam~\cite{KingmaBa2015} uses step $0.05$, coefficients $(0.9,0.999)$ and
stabiliser $10^{-8}$; signGD uses $0.35/(k+1)^{0.7}$. The iRprop$^-$
implementation follows~\cite{Igel2003}: when a derivative changes sign, the
step is reduced and that coordinate update is skipped. Its initial step is
$0.1$, increase/decrease factors are $1.2$ and $0.5$, and step limits are
$10^{-14}$ and $1$. These choices do not claim optimal tuning for any method.

Figure~\ref{fig:bench-in} reports the matched comparison on the diagonal instance.
\begin{figure}[htbp]
\centering\includegraphics[width=\textwidth]{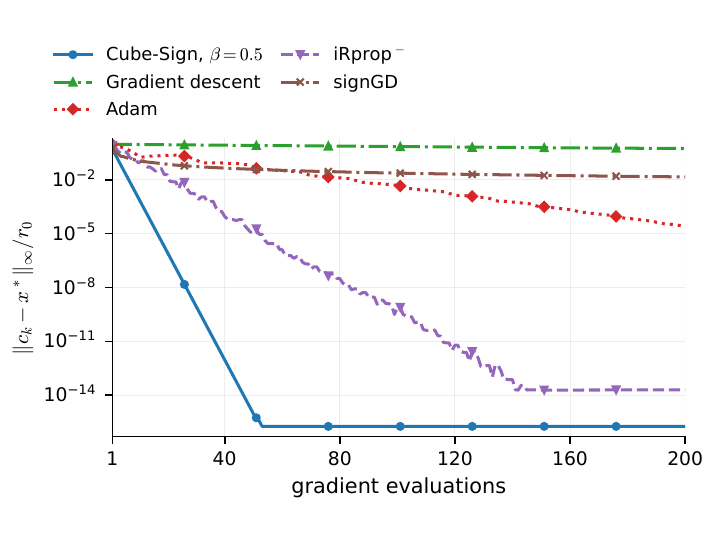}
\caption{Diagonal quadratic, $n=50$, condition number $1000$. The basic method
uses only signs and follows the predicted geometric localisation despite the
large spread of diagonal curvature. Reference methods use the same target,
initial centre and gradient-evaluation budget. Small displayed plateaus reflect
finite precision and the specified algorithm parameters, not proved error floors}
\label{fig:bench-in}
\end{figure}

For the diagonal instance, sign consistency is certified without inspecting
an empirical convergence curve. Figure~\ref{fig:bench-in} illustrates this
invariance to diagonal curvature. The dense instance in
Fig.~\ref{fig:bench-out} does not satisfy the halving criterion. Its locked
halving trajectory is consistent with the necessity theorem, whereas the
outcome of a more overlapping rule on this particular start remains a
finite-instance observation. An uncertified run must not be turned into a
universal convergence claim merely because its plotted error decreases.

\begin{figure}[htbp]
\centering\includegraphics[width=\textwidth]{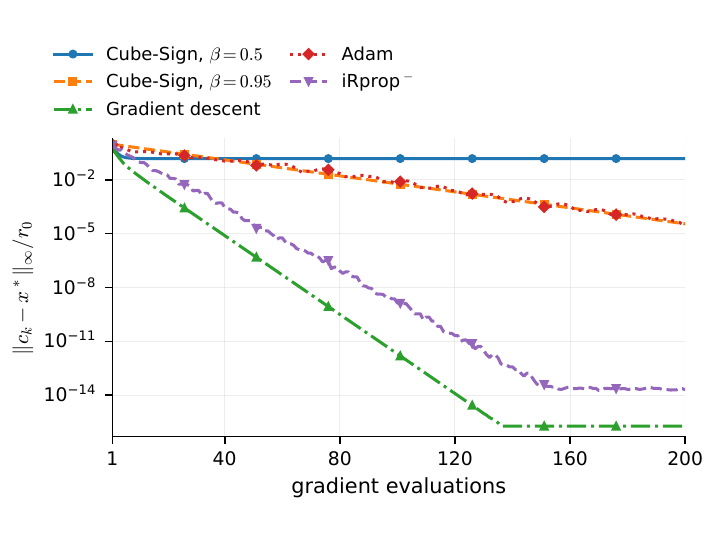}
\caption{Dense positive definite quadratic, $n=50$. Halving and the more
conservative choice $\beta=0.95$ use identical admissible cubes. Reference
methods can continue adjusting their displacement budgets; geometric box
splitting cannot undo a locked exclusion. A single instance and a finite
budget do not establish an asymptotic or general performance ordering}
\label{fig:bench-out}
\end{figure}

Table~\ref{tab:benchmark} records the final errors and first hitting times.
\begin{table}[htbp]
\caption{Matched quadratic experiments. Relative argument error after $200$
gradients and first iteration reaching $10^{-8}$. A dash means the threshold
was not reached within the budget; values close to $10^{-16}$ are at numerical
resolution. Parameters are fixed, not tuned to select a winner.}
\label{tab:benchmark}
\begin{tabularx}{\textwidth}{@{}lLrr@{}}
\toprule
Instance & Method & Final error & First $k$\\
\midrule
Diagonal & \CUBE, $\beta=0.5$ & $1.87\times10^{-16}$ & 27\\
Diagonal & Gradient descent & $5.51\times10^{-1}$ & --\\
Diagonal & Adam & $2.72\times10^{-5}$ & --\\
Diagonal & iRprop$^-$ & $2.01\times10^{-14}$ & 84\\
Diagonal & signGD & $1.44\times10^{-2}$ & --\\
Dense & \CUBE, $\beta=0.5$ & $1.51\times10^{-1}$ & --\\
Dense & \CUBE, $\beta=0.95$ & $3.51\times10^{-5}$ & --\\
Dense & Gradient descent & $1.94\times10^{-16}$ & 67\\
Dense & Adam & $2.45\times10^{-5}$ & --\\
Dense & iRprop$^-$ & $2.03\times10^{-14}$ & 90\\
\bottomrule
\end{tabularx}
\end{table}

\subsection{Certificates versus observed success}
The random-matrix experiment in Appendix~\ref{app:experiments} reports
unconditional frequencies of the certificate event and of finite-budget
success from a random start, with nominal $95\%$ Wilson intervals. Both
variants there receive the same $260$-gradient budget and are judged by the
same external error. Their different frequencies illustrate the quantifier
in Theorem~\ref{th:quad-iff}, not a failure of it.

On the discrete Laplacian, the computed aspect agrees with the sine vector
and both box splitting and Jacobi have dimension-dependent iteration counts.
The comparison uses the same initial point and weighted stopping criterion.
For separable sign-consistent objectives, the iteration guarantee is
independent of dimension, but the work per iteration is not: writing and
updating the sign vector already costs $O(n)$. Appendix~\ref{app:experiments}
also separates the exact smoothed oracle from its sampled approximation and
illustrates why a maximum-norm voting bound cannot omit dimension.

\section{Discussion and conclusions}\label{sec:discussion}

The speed of greedy splitting comes from the information in a trustworthy
sign, not from a large decrease in the objective. The same irreversibility
that gives optimal localisation on the exact class creates sharp failure
outside it. A useful analysis must therefore specify what certifies a sign,
how much uncertainty the overlap can absorb, and what information the oracle
actually supplies. The scalar envelope answers the worst-case question;
the matrix criterion makes it computable for quadratics; adaptive half-sizes
and the capped noise limit explain how geometric protection changes when
uncertainty is local and persistent.

Several boundaries of the theory remain meaningful.
\paragraph{Oracle complexity beyond the exact class.}\label{op:lower-msc}
The factor $(1-\theta)^{-1}$ in the safe overlapping rule is an upper bound
for this algorithm, not a proved minimax price on $\MSC_w(\theta)$.
Determining that price requires lower bounds for algorithms allowed to retain
multiple candidate regions or revisit previously rejected ones, with the
oracle and the norm fixed in advance.

\paragraph{Sharpness under additional geometry.}\label{op:envelope}
The envelope is attained when dimension and ties may be chosen. The exact
worst-case residual in a prescribed low dimension, under a prescribed tie
rule, or within strongly convex quadratics is a more restrictive problem.
The convex smooth potential used here does not settle all of these variants.
Likewise, minimising $\theta_w$ is not the same as minimising total localisation
cost once the initial covering radius and matrix setup are included.

\paragraph{Learning a certificate and analysing typical starts.}\label{op:phase}
Adaptive boxes use a known valid matrix bound; learning that bound from noisy
local information without losing retention would require a joint estimation
and localisation argument. A random-start theory must specify the matrix
ensemble, target distribution, accuracy and budget. The existence of an
adversarial start and the frequency of observed successful starts are distinct
objects, and neither can replace the other.

For applications, the resulting decision is concrete. Use halving when
coordinate signs are reliable, overlap when a certified defect is available,
and adaptive half-sizes when coordinatewise coupling bounds are known.
Account separately for the cost and bias of comparisons, averaging and
restarts. When these hypotheses cannot be justified, shrinking a single box
is an exploratory procedure, not a global-optimality certificate.

This study synthesises more than a decade of research experience of the
scientific group at the Matrosov Institute for System Dynamics and Control
Theory, Siberian Branch of the Russian Academy of Sciences. The practical
effectiveness of methods of the type considered here has long attracted the
authors' attention. The present article is an attempt to explain those
observations through theory: to identify the structures that make inexpensive
sign-based decisions reliable, and to distinguish them from situations in
which practical success cannot be promoted to a universal guarantee.

\begin{acknowledgement}
The research was supported by Russian Science Foundation (project No. 21-71-30005-$\mathrm{\Pi}$).

\end{acknowledgement}

\section*{Statements and declarations}
\paragraph{Funding.}
The work was supported by the basic research programmes of the Matrosov
Institute for System Dynamics and Control Theory, Siberian Branch of the
Russian Academy of Sciences, and of the Moscow
Institute of Physics and Technology.
\paragraph{Data and code availability.}
Online Resource~1 contains the code, random seeds, instance data, figure data
and verification outputs used in the computational study. The experiments
use synthetic data only.
\paragraph{Use of generative AI.}
During the preparation of this work, the authors used modern generative AI
models to assist with mathematical derivations, implementation of computational
checks, visualisation, and manuscript preparation. All mathematical results,
proofs, and numerical findings were verified by the authors. The authors
critically reviewed the AI-assisted material and take full responsibility for
the accuracy, integrity, and conclusions of the article. The numerical plots
were generated programmatically from the supplied data; the explanatory
diagrams were constructed from the mathematical formulas.

\FloatBarrier
\label{mainbodyend}
\bibliographystyle{spmpsci}
\bibliography{references}

\label{referencesend}
\clearpage
\begin{appendix}
\normalsize

\section{Exact localisation: proofs and examples}\label{app:exact}
\subsection*{Proof of Lemma~\ref{lem:master}}
\begin{proof}
(i) Suppose the step is correct or $e_{k,i}=0$. If
$|e_{k,i}|\ge(1-\beta)r_{k,i}$, then $|e_{k+1,i}|=|e_{k,i}|-(1-\beta)r_{k,i}$
and $d_{k+1,i}=\bigl[|e_{k,i}|-r_{k,i}\bigr]_+=d_{k,i}$; if
$|e_{k,i}|<(1-\beta)r_{k,i}$, then $|e_{k+1,i}|\le(1-\beta)r_{k,i}\le\beta
r_{k,i}=r_{k+1,i}$, so $d_{k+1,i}=0$.
(ii) Here $|e_{k+1,i}|=|e_{k,i}|+(1-\beta)r_{k,i}$, whence
$d_{k+1,i}=\bigl[|e_{k,i}|-(2\beta-1)r_{k,i}\bigr]_+$. If
$|e_{k,i}|\ge r_{k,i}$ the increment equals $2(1-\beta)r_{k,i}$; otherwise
$d_{k,i}=0$, $d_{k+1,i}\le|e_{k,i}|$ and simultaneously
$d_{k+1,i}\le2(1-\beta)r_{k,i}$.
(iii) From $x^*\in\Bx(c_0,r_0w)$ we get $d_{0,i}=0$; by (i)--(ii) the quantity
$d_{k,i}$ grows only at wrong steps, each time by at most the stated minimum.
Finally $|e_{m,i}|\le d_{m,i}+r_{m,i}$ and $r_{m,i}\to0$.\qed
\end{proof}

\begin{proposition}[relation to coordinatewise unimodality]
\label{prop:csc-unimodal}
If $f\in\CSC(x^*,B)$, then for every $i$ and every fixed $x_{-i}$ the section
$t\mapsto f(t,x_{-i})$ decreases strictly for $t<x^*_i$ and increases strictly
for $t>x^*_i$ within the section of the box; in particular $x^*$ is the unique
minimiser of $f$ on $B$.

The converse fails: strict unimodality of the sections does \emph{not} imply
$f\in\CSC(x^*,B)$. A counterexample is
$g(t)=\frac{t^4}{4}-\frac{2t^3}{3}+\frac{t^2}{2}$ with $g'(t)=t(t-1)^2$: the
function decreases strictly for $t<0$, increases strictly for $t>0$ and has a
unique minimiser, yet $g'(1)=0$ with $1\ne0=x^*$, so~\eqref{eq:csc} fails. The
two classes coincide exactly when the derivatives of the sections do not vanish
away from the minimiser.
\end{proposition}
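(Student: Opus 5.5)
The proof splits into three parts: the section claim, the uniqueness of the minimiser, and the converse counterexample.

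\emph{Section claim.} Fix $i$ and $x_{-i}$. The section $\phi(t)=f(t,x_{-i})$ is defined on the interval $I=\{t:(t,x_{-i})\in B\}$, which contains $x_i^*$ because $x^*\in B$. It is differentiable on a neighbourhood of $I$ with $\phi'(t)=\partial_if(t,x_{-i})$. For $t\in I$ with $t\ne x_i^*$, condition~\eqref{eq:csc} gives $\phi'(t)(t-x_i^*)>0$. So $\phi'<0$ on $I\cap(-\infty,x_i^*)$ and $\phi'>0$ on $I\cap(x_i^*,\infty)$. The mean value theorem then makes $\phi$ strictly decreasing on $I\cap(-\infty,x_i^*]$ and strictly increasing on $I\cap[x_i^*,\infty)$. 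The endpoint $x_i^*$ is included because $\phi$ is continuous and the derivative has a strict sign on the open part.

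\emph{Uniqueness of the minimiser.} Let $x\in B$ with $x\ne x^*$. Define points $y^{(0)}=x$ and $y^{(j)}=(x_1^*,\dots,x_j^*,x_{j+1},\dots,x_n)$. Each $y^{(j)}$ lies in $B$, since a box is a product of intervals and both $x_l$ and $x_l^*$ lie in the $l$-th interval. Consecutive points differ in at most coordinate $j$. There, by the section claim (applied with the other coordinates frozen), $f(y^{(j)})\le f(y^{(j-1)})$, with strict inequality whenever $x_j\ne x_j^*$. At least one coordinate differs, so $f(x^*)=f(y^{(n)})<f(x)$.

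\emph{Converse counterexample.} Take the one-dimensional function $g$ with $x^*=0$. First compute $g'(t)=t^3-2t^2+t=t(t-1)^2$. This derivative is negative for $t<0$. It is positive for $t>0$ except at $t=1$, where it vanishes. A derivative that is nonnegative and vanishes only at an isolated point still gives strict increase, by integrating $g'$. Hence the sections are strictly unimodal, $g$ has unique minimiser $0$, and yet~\eqref{eq:csc} fails at $t=1$.

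\emph{Characterisation.} Under strict section monotonicity toward a common target, $\partial_if(x)(x_i-x_i^*)\ge0$ always holds, since a derivative of a monotone function has the matching weak sign. So~\eqref{eq:csc} holds exactly when these derivatives do not vanish away from the target, which is the stated equivalence.

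The main obstacle is only bookkeeping: checking that the intermediate points $y^{(j)}$ stay in $B$, and handling the endpoint $x_i^*$ in the strict monotonicity.
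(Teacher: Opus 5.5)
Your proof is correct and follows the paper's own argument: the derivative signs give strict monotonicity of each section, replacing the coordinates of $x$ one at a time by those of $x^*$ gives strict decrease, and the displayed $g$ is the counterexample. You add the mean-value-theorem detail that makes the monotonicity include the endpoint $x_i^*$, and you justify the final ``coincide exactly when'' sentence through the weak derivative sign of a monotone section; the paper leaves that sentence unproved here and uses the same reasoning later in Proposition~\ref{prop:cmp-vs-csc}.
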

\begin{proof}
The sign of the derivative of a section coincides with the sign of $t-x^*_i$,
whence monotonicity along each ray; moving coordinatewise from any $x\ne x^*$ to
$x^*$ decreases the value strictly. The converse is the displayed example; its
relevance is shown by Example~\ref{ex:tie}, where a vanishing derivative lets an
admissible tie-breaking rule send the method the wrong way.\qed
\end{proof}

\begin{example}[the price of one tie]\label{ex:tie}
Put
\[
  F(t)=g(1-t),\qquad g(s)=\frac{s^4}{4}-\frac{2s^3}{3}+\frac{s^2}{2},
  \qquad\text{so that}\qquad F'(t)=-(1-t)\,t^2 .
\]
Then $F$ decreases strictly on $(-\infty,1)$ and increases strictly on
$(1,+\infty)$: it is strictly unimodal with the unique minimiser $x^*=1$.
However $F'(0)=0$, i.e. at the point $t=0$, different from the minimiser, the
derivative is degenerate; the strict condition~\eqref{eq:csc} fails, only
the non-strict one holds.

Take $n=1$, $\beta=1/2$, $c_0=0$, $r_0=1$, so that $x^*\in[c_0-r_0,c_0+r_0]$.
The tie-breaking rule may choose $s_0=+1$; then $c_1=c_0-\tfrac12s_0=-\tfrac12$
and $|c_1-x^*|=\tfrac32>r_1=\tfrac12$: the solution is discarded at the first
step. Afterwards $F'(c_k)<0$ for $c_k<1$, all signs are correct and the centres
increase monotonically: $c_2=-\tfrac14$, $c_3=-\tfrac18$, \dots,
$c_k\to0$. The final error equals exactly $1=r_0$: the method has made no
progress at all. Note also that appealing to ``the set of ties has measure
zero'' is not legitimate in general: for a function with a flat piece that set
has positive measure.
\end{example}

\subsection*{Proof of Theorem~\ref{th:exact}}
\begin{proof}
\emph{Sufficiency.} Repeat the induction of Theorem~\ref{th:csc}, noting that
if $x^*\in\Bx(c_k,r_k)\subseteq B$ then $c_k\in C_w(B,x^*)$ (take $r=R_k$), so
the sign condition applies at every point of the trajectory; and by Lemma~\ref{lem:nested}
$\Bx(c_{k+1},r_{k+1})\subseteq\Bx(c_k,r_k)\subseteq B$.

\emph{Necessity.} Suppose $\partial_if(\bar c)(\bar c_i-x^*_i)\le0$ for some
$\bar c\in C_w(B,x^*)$ and some $i$ with $\bar c_i\ne x^*_i$.
By~\eqref{eq:Cw} there is $r$ with $x^*\in\Bx(\bar c,rw)\subseteq B$; take this
box as the starting one and $c_0=\bar c$. Then $e_{0,i}\ne0$. If
$\partial_if(\bar c)\ne0$, then $\sgn\partial_if(\bar c)=\sgn(e_{0,i})$, so
$s_{0,i}=\sgn(e_{0,i})$ and the first step is wrong in coordinate $i$; if
$\partial_if(\bar c)=0$, an admissible tie-breaking rule may choose
$s_{0,i}=\sgn(e_{0,i})$, again a wrong step. In both cases
Lemma~\ref{lem:lock} with $\beta=1/2$ gives
$\lim_k|e_{k,i}|\ge|e_{0,i}|>0$, contradicting convergence.\qed
\end{proof}

\begin{example}[locality matters]\label{ex:local}
Let $n=1$, $w=1$, $B=[-1,1]$, $x^*=0.9$. Then $C_1(B,x^*)=[-0.05,0.95]$: a
centre $c<-0.05$ cannot be the centre of a segment that contains $0.9$ and lies
in $B$. The function
\[
  f(x)=(x-0.9)^2+
  \begin{cases}
    40\,(x+0.9)^2(x+0.1)^2, & -0.9\le x\le-0.1,\\
    0,&\text{otherwise},
  \end{cases}
\]
belongs to $C^1(B)$, has the unique minimiser $0.9$ and satisfies the reachable-centre sign condition
of Theorem~\ref{th:exact}, so the method converges from every admissible
segment. Yet $f'(-0.7)=0.64>0$ while $-0.7<x^*$: the strict sign condition fails
on $B$. Thus~\eqref{eq:csc} on all of $B$ is sufficient but not necessary; it is
necessary exactly on the reachable centres.
\end{example}

\begin{proposition}[closure properties]\label{prop:algebra}
Let $f,g\in\CSC(x^*,B)$. Then:
\begin{enumerate}[leftmargin=*,label=\textup{(\roman*)},itemsep=1pt]
\item $\alpha f+\gamma g\in\CSC(x^*,B)$ for $\alpha,\gamma\ge0$,
  $\alpha+\gamma>0$;
\item $\varphi\circ f\in\CSC(x^*,B)$ for every differentiable $\varphi$ with
  $\varphi'>0$ on $f(B)$;
\item let $f_1,\dots,f_n\colon\R\to\R$ be differentiable with
  $f_i'(t)(t-x^*_i)>0$ for $t\ne x^*_i$, and let $\Phi\colon\R^n\to\R$ be
  differentiable with $\partial_j\Phi>0$; then
  $F(x)=\Phi\bigl(f_1(x_1),\dots,f_n(x_n)\bigr)\in\CSC(x^*,B)$;
\item $\max\{f,g\}\in\CSC(x^*,B)$ at every point of differentiability;
\item if $f_m\in\CSC^{\,\circ}(x^*,B)$, $f_m\to f$ pointwise and the limit $f$
  is differentiable, then $f\in\CSC^{\,\circ}(x^*,B)$.
\end{enumerate}
\end{proposition}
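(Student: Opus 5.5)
Each item will be checked directly against the pointwise condition~\eqref{eq:csc}. We fix $x\in B$ and an index $i$ with $x_i\ne x_i^*$, and show that the relevant partial derivative has the sign of $x_i-x_i^*$. Multiplying by $(x_i-x_i^*)$ turns each claim into a positivity statement.

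For (i), linearity gives $\partial_i(\alpha f+\gamma g)(x)(x_i-x_i^*)=\alpha\,\partial_if(x)(x_i-x_i^*)+\gamma\,\partial_ig(x)(x_i-x_i^*)$. Both products are strictly positive, the coefficients are nonnegative, and not both vanish, so the sum is positive.

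For (ii), the chain rule gives $\partial_i(\varphi\circ f)(x)=\varphi'(f(x))\,\partial_if(x)$ with $\varphi'(f(x))>0$, so the sign is unchanged.

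For (iii), only the $i$-th inner function depends on $x_i$, so $\partial_iF(x)=\partial_i\Phi(f_1(x_1),\dots,f_n(x_n))\,f_i'(x_i)$. This is the product of a strictly positive factor and a factor whose sign is that of $x_i-x_i^*$. Differentiability of $F$ follows from the chain rule because $\Phi$ is differentiable.

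For (iv), I would argue at a point $x$ where $h=\max\{f,g\}$ is differentiable, splitting into cases.
\begin{itemize}
\item If $f(x)>g(x)$, continuity gives $h=f$ near $x$, so $\nabla h(x)=\nabla f(x)$. The case $g(x)>f(x)$ is symmetric.
\item If $f(x)=g(x)$, consider the section $t\mapsto h(x+te_i)$, which is differentiable at $t=0$. Since $h\ge f$ with equality at $t=0$, the function $h-f$ has a minimum at $t=0$ along the section. Its one-sided difference quotients therefore give $\partial_ih(x)=\partial_if(x)$. Explicitly, the right quotient of $h-f$ is $\ge0$ and the left quotient is $\le0$, so differentiability of $h$ forces $\partial_ih=\partial_if$. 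The same argument gives $\partial_ih=\partial_ig$.
\end{itemize}
In every case $\partial_ih(x)$ equals $\partial_if(x)$ or $\partial_ig(x)$, and both have the required sign.

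For (v), which I expect to be the only delicate step, pointwise convergence of functions does not imply convergence of derivatives. I would therefore avoid derivatives of $f_m$ and pass through monotonicity of sections instead.
\begin{enumerate}
\item For $f_m\in\CSC^\circ$, the mean value theorem on each coordinate section shows that $t\mapsto f_m(x_{-i},t)$ is nonincreasing for $t\le x_i^*$ and nondecreasing for $t\ge x_i^*$, within the section of the box.
\item These weak inequalities between values survive pointwise limits, so every section of $f$ has the same weak monotonicity.
\item For $x_i>x_i^*$, the difference quotients $(f(x+te_i)-f(x))/t$ with small $t>0$ are then $\ge0$. If $x$ lies on the upper face of $B$ in coordinate $i$, I would use the left quotient with $t<0$ instead, which is also $\ge 0$ by monotonicity. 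Letting $t\to0$ gives $\partial_if(x)\ge0$.
\item The case $x_i<x_i^*$ is symmetric and gives $\partial_if(x)\le0$.
\end{enumerate}
Hence $f\in\CSC^\circ$. The step to watch is using only value inequalities, never derivative limits. This also explains why strictness cannot be claimed in (v): strict inequalities can degenerate in the limit.
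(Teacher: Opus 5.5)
Your proposal is correct and follows the paper's proof item by item. You use linearity for (i), the chain rule for (ii) and (iii), and reduction to the active function for (iv); for (v) you pass through weak monotonicity of coordinate sections, which survives pointwise limits, rather than through limits of derivatives. Your explicit treatment of the tie case $f(x)=g(x)$ in (iv), and of points on a face of $B$ in (v), supplies details that the paper's terse proof leaves implicit.
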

\begin{proof}
(i) A sum of quantities of the same sign, at least one of them strictly
positive. (ii) See Proposition~\ref{lem:invariance}. (iii)
$\partial_iF=\partial_i\Phi\cdot f_i'(x_i)$ with $\partial_i\Phi>0$ and
$\sgn f_i'(x_i)=\sgn(x_i-x^*_i)$. (iv) At a point of differentiability the
gradient coincides with that of the active function, and both lie in $\CSC$.
(v) Derivatives are not inherited from pointwise convergence, so we argue
through monotonicity: by Proposition~\ref{prop:csc-unimodal} each section of
$f_m$ is non-increasing to the left of $x^*_i$ and non-decreasing to the right;
a pointwise limit of monotone functions is monotone in the same sense, and a
differentiable monotone function has a derivative of the required (non-strict)
sign.\qed
\end{proof}

\subsection*{Proof of Corollary~\ref{cor:csc-f}}
\begin{proof}
The weighted localisation bound gives
$\|c_k-x^*\|_2\le R_k\|w\|_2$.
Stationarity and Lipschitz continuity imply
$\|\nabla f(c_k)\|_2\le L\|c_k-x^*\|_2$.
The descent lemma with base point $x^*$ gives
$f(c_k)-f(x^*)\le L\|c_k-x^*\|_2^2/2$; nonnegativity follows from minimality.
These are the two assertions.\qed
\end{proof}

\subsection*{Proof of Theorem~\ref{th:subgrad}}
\begin{proof}
The coordinatewise induction of Theorem~\ref{th:csc} applies verbatim: for
$c_{k,i}\ne x^*_i$ condition~\eqref{eq:subcsc} gives a correct, non-degenerate
sign, while for $c_{k,i}=x^*_i$ both directions are safe. Differentiability is
never used.\qed
\end{proof}

\begin{example}[$\ell_1$ in, $\ell_\infty$ out]\label{ex:l1}
For $f(x)=\sum_ia_i|x_i-x^*_i|$ with $a_i>0$ every subgradient has
$g_i=a_i\sgn(x_i-x^*_i)$ whenever $x_i\ne x^*_i$, so~\eqref{eq:subcsc} holds and
Theorem~\ref{th:subgrad} applies. By contrast, for $f(x)=\ninf{x-x^*}$ at the
point $x=(0.9,0.8)$ (with $x^*=0$) the subdifferential consists of the vector
$(1,0)$: the inactive coordinate contributes nothing, the condition fails, and
an admissible negative tie in that coordinate locks the error $0.8$ by Lemma~\ref{lem:lock}. The difference between
$\ell_1$ and $\ell_\infty$ here is structural and has nothing to do with
smoothness.
\end{example}

\subsection*{Proof of Theorem~\ref{th:lower}}
\begin{proof}
Consider $f_z(x)=\|x-z\|_2^2/2$, with
$z\in Q_0:=\Bx(c_0,r_0w)$. Its sign oracle compares each query coordinate
with $z_i$. Normalise the first target coordinate as
$\zeta=(z_1-c_{0,1})/w_1\in(-r_0,r_0)$, and keep all other target coordinates
fixed. Start with the open interval $I_0=(-r_0,r_0)$ of possible $\zeta$.
At each query the adversary keeps the longer of the two portions cut by the
normalised query coordinate, returning the corresponding nonzero sign. If the
query lies outside $I_{j-1}$, it keeps that entire interval. Thus
$|I_j|\ge |I_{j-1}|/2$, and all points of $I_j$ remain consistent with the
history. Answers in the other coordinates are the fixed target's actual signs.
After $k$ queries, $|I_k|\ge2^{1-k}r_0$. Any reported first coordinate has
worst-case weighted error, in the supremum over this open interval, at least
$|I_k|/2\ge2^{-k}r_0$. Equality answers do not improve this worst case, because
the adversary never needs to return zero in the uncertain coordinate.\qed
\end{proof}

\begin{remark}[randomised expected error]\label{rem:yao}
For a lower bound on the expected error of a randomised algorithm, put a uniform
prior on the normalised first target coordinate in $(-r_0,r_0)$ and fix the
others. This is a prior on a one-dimensional slice of $Q_0$, not the uniform
prior on the full box. For a deterministic algorithm, the nonzero answers
partition the slice into at most $2^k$ intervals, with lengths $\ell_j$ summing
to $L=2r_0$. The output is constant on each leaf, and its integrated absolute
error on an interval is at least $\ell_j^2/4$. Hence
\[
 \E\,\frac{|\widehat x_1-z_1|}{w_1}
 \ge\frac{1}{4L}\sum_j\ell_j^2
 \ge\frac{L}{4\,2^k}=2^{-(k+1)}r_0.
\]
The same prior lower bound holds after averaging over the algorithm's random
seed, and therefore lower-bounds its worst-input expected weighted error.
This is the elementary distributional argument underlying Yao's
principle~\cite{Yao1977}. It preserves the rate, not the deterministic constant.
\end{remark}

\section{The defect envelope and its realisation}\label{app:envelope}\label{app:proofs}
\subsection*{Proof of Proposition~\ref{prop:clipped-path}}
Write $a_i=|x_i-x_i^*|/w_i$ and $M=\max_i a_i>0$.
The path $\gamma$ in the main text is continuous and differentiable except at
finitely many values $a_i$. On every open interval between successive distinct
values in $\{0,a_1,\dots,a_n\}$, the moving set is
$I(t)=\{i:a_i>t\}$ and is nonempty. For $0<t<M$,
$\|\gamma(t)-x^*\|_{\infty,w}=t$, and every $i\in I(t)$ has normalised error
exactly $t>\theta t$. Hence
\[
 \frac{d}{dt}f(\gamma(t))=
 \sum_{i\in I(t)}w_i\sgn(x_i-x_i^*)\,\partial_if(\gamma(t))>0.
\]
The mean value theorem gives strict increase on each closed piece; continuity
joins the pieces. Thus $f(x)>f(x^*)$. The path lies coordinatewise between
$x^*$ and $x$, hence inside the box. No continuity of the derivative beyond
that required for differentiability is used.\qed

\subsection*{Proof of Theorem~\ref{th:theta}}
\begin{proof}
Pass to the normalised variables $u_{k,i}=(c_{k,i}-x^*_i)/w_i$, so that
$M_k=\ninf{u_k}$ and the step length in each coordinate equals $h_k=R_k/2$ with
$R_k=2^{-k}r_0$. If $|u_{k,i}|>\theta M_k$, then by~\eqref{eq:msc} the sign is
correct and
$|u_{k+1,i}|=\bigl||u_{k,i}|-h_k\bigr|\le\max\{M_k-h_k,\ h_k\}$. For the
remaining coordinates $|u_{k+1,i}|\le\theta M_k+h_k$ whatever the sign. Hence
\begin{equation}\label{eq:scalar-halving}
  M_{k+1}\le\max\bigl\{M_k-\tfrac{R_k}{2},\ \theta M_k+\tfrac{R_k}{2}\bigr\}.
\end{equation}
From $M_0\le r_0$ we get $M_1\le\max\{r_0/2,\ \theta r_0+r_0/2\}=R_1+\theta r_0$.
Assume $M_k\le R_k+\theta r_0$ for some $k\ge1$. Since $R_k\le r_0/2$ and
$\theta\le1/2$, this gives $M_k\le r_0$. Then the first argument of the maximum
in~\eqref{eq:scalar-halving} is at most $R_k+\theta r_0-\tfrac{R_k}{2}=R_{k+1}+\theta
r_0$, and the second at most $\theta r_0+\tfrac{R_k}{2}=R_{k+1}+\theta r_0$. The
induction is complete; the limit exists because the step lengths are
summable.\qed
\end{proof}

\begin{proposition}[the constant $1$ is sharp]\label{prop:tight}
For every $\theta\in(0,1/2]$ and every $\eta\in(0,\theta)$ there are a positive
definite quadratic function of two variables with $\theta_{\mathbf1}(H)=\theta$
(Definition~\ref{def:defect}) and a starting cube (depending on $\eta$) for
which $\lim_kM_k\ge\eta r_0$. Namely, for
$H_\theta=\bigl(\begin{smallmatrix}1&\theta\\\theta&1\end{smallmatrix}\bigr)$,
$x^*=0$, $w=\mathbf1$ and $c_0=r_0(\eta,-1)$ the first step is wrong in the
first coordinate. Consequently no universal bound
$\lim_kM_k\le\mathrm{const}\cdot\theta r_0$ with a constant smaller than one can
hold.
\end{proposition}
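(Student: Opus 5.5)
The plan is a direct computation of the first step, followed by the locking lemma. First we check the matrix data. $H_\theta$ has unit diagonal and off-diagonal entry $\theta\in(0,1/2]$, so its eigenvalues are $1\pm\theta>0$. Hence it is positive definite, and $A=D^{-1}|B_H|$ has rows $(0,\theta)$ and $(\theta,0)$. With $w=\mathbf1$ this gives $\theta_{\mathbf1}(H_\theta)=\max_i(Aw)_i/w_i=\theta$, as required. The target is $x^*=0$. The starting cube $\Bx(c_0,r_0\mathbf1)$ with $c_0=r_0(\eta,-1)$ contains $0$, because $|\eta|<1$ and the second coordinate sits exactly at distance $r_0$. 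So the initial box is admissible in the sense of Theorem~\ref{th:quad-iff} and Lemma~\ref{lem:nested}.

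Next we compute the first sign. Since $\nabla f(c_0)=H_\theta c_0=r_0(\eta-\theta,\ \theta\eta-1)$, the first component is strictly negative because $\eta<\theta$. There is therefore no tie, and every tie rule gives $s_{0,1}=-1$. Meanwhile $e_{0,1}=x^*_1-c_{0,1}=-\eta r_0<0$, so $s_{0,1}=\sgn e_{0,1}$. By the definition preceding Lemma~\ref{lem:lock}, the step is wrong in coordinate $1$: the centre moves to $c_{1,1}=\eta r_0+r_0/2$, away from $0$.

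With $\beta=1/2$ we then apply Lemma~\ref{lem:lock}. It gives $\lim_m|e_{m,1}|\ge|e_{0,1}|-(2\beta-1)r_0=\eta r_0$ for every continuation, and in particular for every sign sequence produced later by the gradient. Since $M_k\ge|e_{k,1}|$ when $w=\mathbf1$, and the limit exists by Lemma~\ref{lem:nested}, we obtain $\lim_kM_k\ge\eta r_0$.

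Finally, for the consequence, suppose a universal bound $\lim_kM_k\le C\theta r_0$ held with some $C<1$. Choosing $\eta\in(C\theta,\theta)$, which is possible since $\theta>0$, contradicts the inequality just proved. No step here is a genuine obstacle. The only points needing care are that the chosen $c_0$ keeps $x^*$ on the boundary of the initial cube (so the box is admissible), and that the wrong sign is strict and therefore independent of the tie rule.
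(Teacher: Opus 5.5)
Your proof is correct and matches the paper's argument. The first gradient component $r_0(\eta-\theta)<0$ opposes the positive coordinate error $\eta r_0$, so the first step is strictly wrong and no tie occurs. Lemma~\ref{lem:lock} at $\beta=1/2$ then locks an error of at least $\eta r_0$, and letting $\eta\uparrow\theta$ (equivalently, choosing $\eta\in(C\theta,\theta)$) rules out any constant below one. Your checks of positive definiteness, of $\theta_{\mathbf1}(H_\theta)=\theta$, and of the admissibility of the starting cube are the same verifications the paper states or leaves implicit.
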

\begin{proof}
$\partial_1f(c_0)=r_0(\eta-\theta)<0$ while $c_{0,1}-x^*_1=\eta r_0>0$; hence
the first step is wrong in coordinate~1 and Lemma~\ref{lem:lock} with
$\beta=1/2$ gives $\lim_k|e_{k,1}|\ge\eta r_0$. The matrix $H_\theta$ is
positive definite for $\theta<1$ and its defect equals $\theta$. Letting
$\eta\to\theta-0$ rules out any constant below one.\qed
\end{proof}

\subsection*{Analysis of the scalar envelope (Proposition~\ref{prop:envelope})}

Let $\theta>2\beta-1$ and $q_k=b_k/R_k$ with $R_k=\beta^kr_0$. The second
argument of the maximum in~\eqref{eq:envelope} is at least the first exactly
when
\[
 \begin{aligned}
 \theta b_k+(1-\beta)R_k&\ge b_k-(1-\beta)R_k\\
 &\iff 2(1-\beta)R_k\ge(1-\theta)b_k\\
 &\iff q_k\le\frac{2(1-\beta)}{1-\theta}=:q^\sharp.
 \end{aligned}
\]
While $q_k\le q^\sharp$ we have $b_{k+1}=\theta b_k+(1-\beta)R_k$, that is,
\[
  q_{k+1}=\frac{\theta q_k+1-\beta}{\beta},
\]
an increasing affine recursion with fixed point $q^\star=(1-\beta)/(\beta-\theta)$
when $\theta<\beta$. The inequality $q^\star>q^\sharp$ is equivalent to
$1+\theta>2\beta$, i.e. precisely to the assumption $\theta>2\beta-1$; hence
$q_k$ increases strictly towards $q^\star$ and crosses $q^\sharp$ after finitely
many steps. For $\theta\ge\beta$ the sequence $q_k$ grows without bound, so the
threshold is crossed a fortiori.

Let $J$ be the first index with $q_J\ge q^\sharp$. Then
$b_{J+1}=b_J-(1-\beta)R_J$ and
\[
  q_{J+1}=\frac{q_J-1+\beta}{\beta}\ \ge\ q_J
  \iff q_J\ \ge\ 1 ,
\]
while $q^\sharp\ge1$ is equivalent to $\theta\ge2\beta-1$, which holds.
Therefore the first argument dominates from step $J$ onwards and
\[
  b_\infty=b_J-\sum_{k\ge J}(1-\beta)R_k=b_J-R_J .
\]
If $\theta\le2\beta-1$, a direct check gives $b_k=R_k$: indeed
$\theta R_k+(1-\beta)R_k\le(2\beta-1)R_k+(1-\beta)R_k=\beta R_k=R_{k+1}$ and
$R_k-(1-\beta)R_k=R_{k+1}$. \qed

\subsection*{Sharpness of the envelope (Theorem~\ref{th:attain})}

Throughout $r_0=1$, $w=\mathbf1$, $x^*=0$, $R_k=\beta^k$, $h_k=(1-\beta)R_k$ and
$b_k$ is given by~\eqref{eq:envelope}.

\paragraph{The potential.} Let $Z_1,\dots,Z_n$ be independent with the common
density
\[
  p(z)=C_\theta\exp\Bigl[-\frac{1}{(z-\theta)(1-z)}\Bigr]
  \quad\text{for }\theta<z<1,\qquad p(z)=0\text{ otherwise},
\]
which is $C^\infty$, positive inside the interval and flat at both endpoints,
and let $G$ and $F=\psi\circ G$ be as in~\eqref{eq:G-potential}. As an
expectation of maxima of convex quadratics, $G$ is convex and positively
homogeneous of degree two, and
$\ninf{x}^2\le G(x)\le\theta^{-2}\ninf{x}^2$.

For $x\ne0$ the maximising index is almost surely unique, so differentiation
under the expectation gives
\[
  \partial_iG(x)=2x_i\,
  \E\Bigl[Z_i^{-2}\,\mathbf1\Bigl\{\frac{|x_i|}{Z_i}
    >\max_{j\ne i}\frac{|x_j|}{Z_j}\Bigr\}\Bigr],
\]
which is legitimate because $Z_i\ge\theta>0$ and the difference quotients are
locally dominated. Put $M=\ninf{x}$. If $|x_i|\le\theta M$ then
$|x_i|/Z_i\le M$, whereas a coordinate $j$ with $|x_j|=M$ has $|x_j|/Z_j>M$;
hence $i$ never maximises and $\partial_iG(x)=0$. If $|x_i|>\theta M$, then on
the positive-probability event $\{Z_i$ close to $\theta$, $Z_j$ close to $1\}$
the index $i$ does maximise, so $\partial_iG(x)x_i>0$. This
is~\eqref{eq:exact-sign-field}, i.e. $G\in\MSC_{\mathbf1}(\theta,0,\Rn)$ with a
sign field that vanishes identically inside the band.

\paragraph{Regularity.} With the smooth tail $Q(s)=\Prob\{Z\ge s\}$, which
equals $1$ for $s\le\theta$ and $0$ for $s\ge1$, the layer-cake formula gives
\[
  G(x)=\int_0^\infty\Bigl[1-\prod_{i=1}^nQ\Bigl(\frac{|x_i|}{\sqrt t}\Bigr)\Bigr]dt .
\]
In a small neighbourhood of any $x\ne0$ the integrand equals $1$ for all small
$t$ and $0$ for all large $t$, with bounds uniform in the neighbourhood, and is
$C^\infty$ in $x$ on the intermediate compact range; the modulus at $x_i=0$ is
harmless because $Q$ is locally constant there. Hence
$G\in C^\infty(\Rn\setminus\{0\})$. Degree-two homogeneity alone would not give
smoothness at the origin, which is why the flat outer function
$\psi(s)=\int_0^se^{-1/t}dt$ is used: $\psi'(s)=e^{-1/s}>0$ and
$\psi''(s)=e^{-1/s}/s^2>0$, so $F=\psi\circ G$ is convex, has the same sign
field away from the origin, and, since $D^mG(x)=O(\|x\|^{2-m})$ while all
derivatives of $\psi$ vanish faster than any power as $s\downarrow0$, all
derivatives of $F$ extend by zero to the origin. Thus $F\in C^\infty(\Rn)$, and
$F$ has the unique minimiser $0$. Finally, for $x\ne0$
\[
  \nabla^2F(x)=e^{-1/G(x)}\nabla^2G(x)
   +\frac{e^{-1/G(x)}}{G(x)^2}\nabla G(x)\nabla G(x)^{\!\top},
\]
where $\nabla^2G$ is bounded off the origin (degree-zero homogeneity plus
compactness of the unit sphere) and $\|\nabla G(x)\|^2\le CG(x)$, so the second
term is bounded by $Ce^{-1/G}/G\le C/e$; at the origin the Hessian is zero.
Hence $\nabla F$ is globally Lipschitz.

\begin{lemma}[the full interval of one-step preimages]\label{lem:interval}
Let $b\ge h\ge0$. Suppose that on $[-b,b]$ the sign is forced to be $\sgn t$
when $|t|>\theta b$ and may be either sign when $|t|\le\theta b$. Then the set
of all attainable values $t-hs$ is exactly $[-B,B]$ with
$B=\max\{b-h,\ \theta b+h\}$.
\end{lemma}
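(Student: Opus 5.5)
We prove two inclusions: every attainable value lies in $[-B,B]$, and every point of $[-B,B]$ is attained by some $t\in[-b,b]$ with an admissible sign $s$. Everything is symmetric under $t\mapsto-t$, $s\mapsto-s$, so it suffices to treat nonnegative targets, or nonnegative $t$.

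\emph{Upper bound.} Consider first a forced point $|t|>\theta b$ with $s=\sgn t$. Then $|t-hs|=\bigl||t|-h\bigr|\le\max\{b-h,\ h\}$. Since $h\le\theta b+h$, this is at most $B$. Now consider an uncertain point $|t|\le\theta b$ with either sign. Then $|t-hs|\le|t|+h\le\theta b+h\le B$. This mirrors the elementary estimate~\eqref{eq:scalar}.

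\emph{Surjectivity.} We use two continuous families of attainable values and an intermediate value argument.

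The uncertain band contributes $t-h$ and $t+h$ for $t\in[-\theta b,\theta b]$. Their union is $[-\theta b-h,\ \theta b-h]\cup[-\theta b+h,\ \theta b+h]$. It already contains $\pm(\theta b+h)$.

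The forced region with $t\in(\theta b,b]$ contributes $t-h$, which sweeps the interval $(\theta b-h,\ b-h]$. Its mirror sweeps $[-(b-h),\ -(\theta b-h))$.

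It remains to check that these pieces cover $[-B,B]$ without gaps. The right forced piece starts exactly where the band piece $[-\theta b-h,\theta b-h]$ ends, so together they form a connected interval from $-\theta b-h$ to $b-h$. The second band piece $[h-\theta b,\ h+\theta b]$ lies inside $[-\theta b-h,\ \theta b+h]$, and its top endpoint $\theta b+h$ is reached. Taking the union with the mirror images gives $[-\max\{b-h,\theta b+h\},\ \max\{b-h,\theta b+h\}]=[-B,B]$.

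One gap deserves explicit checking. When $\theta b+h>b-h$, the interval $(b-h,\ \theta b+h]$ must be covered. It is covered by $t+h$ with $t\in[-\theta b,\theta b]$, because that piece runs from $h-\theta b\le b-h$ up to $\theta b+h$; here $h-\theta b\le b-h$ holds in the relevant regime, since $h\le b$.

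\emph{Main obstacle.} The delicate part is the endpoint bookkeeping. The forced region is half-open at $|t|=\theta b$, the band closes it, and the degenerate cases $h=0$ and $\theta=0$ must be handled. At $\theta=0$ the band is the single point $\{0\}$, so the values near zero come only from $t=0$ with either sign, giving $\pm h$, together with the forced values $t-h$ for $t\in(0,b]$, which sweep $(-h,b-h]$. I would carry out a short case split on the relative order of $h-\theta b$, $\theta b-h$ and $b-h$, and verify in each case that the union of the closed band images and the half-open forced sweeps is the full symmetric interval.
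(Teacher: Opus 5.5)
Your upper bound is correct, and your overall plan is the paper's: build the attainable set from the band and forced images and their mirror images. The covering step, however, contains a false claim. In the explicit ``gap'' check you assert that $h-\theta b\le b-h$ follows from $h\le b$. It does not, since the inequality is equivalent to $2h\le(1+\theta)b$.

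Take $\theta=0.1$, $b=1$, $h=0.9$. You are in your ``relevant regime'', because $\theta b+h=1>0.1=b-h$. Yet the band piece $[h-\theta b,h+\theta b]=[0.8,1]$ misses $(0.1,0.8)$. Similarly, for $\theta=0$ and $h=b>0$ the band piece is the single point $\{b\}$, while all of $(0,b]$ must be covered. The missing values come from the left forced sweep $t+h$, $t\in[-b,-\theta b)$, which is $[h-b,h-\theta b)$ and abuts the band piece exactly at $h-\theta b$. So your conclusion is true, but the stated reason is false. The case split you defer to the end would have to rediscover this.

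The clean repair, which is the paper's proof, is to group by the sign $s$ rather than by the region of $t$. The sign $s=+1$ is admissible exactly for $t\in[-\theta b,b]$: the closed band glued to the half-open forced part at $\theta b$. These pairs give the single interval $[-\theta b-h,\,b-h]$. Likewise $s=-1$ is admissible exactly for $t\in[-b,\theta b]$ and gives $[h-b,\,\theta b+h]$. Both intervals contain $0$ because $h\le b$, so their union is $[-B,B]$. Every admissible pair $(t,s)$ lies in one of these two families, so this yields both inclusions at once, with no separate upper bound and no case split.

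Your own sentence ``taking the union with the mirror images'' already contains this argument. The mirror image of $[-\theta b-h,b-h]$ is $[h-b,\theta b+h]$, so the only missing observation was that the two intervals overlap at $0$. The subsequent gap paragraph should simply be dropped.
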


\begin{proof}
Combining the outer region $t\in[-b,-\theta b)$, where $s=-1$, with the band,
where $s=-1$ is allowed, produces the interval $[-b+h,\ \theta b+h]$; the
mirrored pair produces $[-\theta b-h,\ b-h]$. Both contain $0$ because $h\le b$,
and their union is $[-B,B]$. The endpoints of the band are included since a
vanishing derivative admits either sign.\qed
\end{proof}

\paragraph{Completion of the proof of Theorem~\ref{th:attain}.}
The recursion gives $b_k\ge R_k\ge h_k$, so Lemma~\ref{lem:interval} applies at
every step. For each $j=0,\dots,J$ construct backwards a scalar trajectory
starting in $[-1,1]$ that equals $+b_j$ at time $j$ and obeys the rule with the
thresholds $\theta b_k$ at the earlier steps; after time $j$ continue it with
any admissible signs. Take these $J+1$ trajectories as the coordinates of one
vector. At every time $k\le J$ all coordinates lie in $[-b_k,b_k]$ by
Lemma~\ref{lem:interval}, and the $k$-th coordinate equals $b_k$; hence the
\emph{actual} maximum is $b_k$, so the thresholds used in the construction agree
with the true vector state, and by~\eqref{eq:exact-sign-field} every selected
sign is either the forced sign of a nonzero derivative of $F$ or an admissible
resolution of a vanishing one. This is the gradient of one fixed function, not
an arbitrary non-integrable vector field.

At time $J$ the distinguished coordinate equals $b_J>R_J$. From then on it stays
maximal, its sign is therefore forced, and it decreases by the whole remaining
budget:
$b_J-\sum_{\ell=J}^{k-1}h_\ell=b_J-R_J+R_k=b_k$. The general upper bound of
Proposition~\ref{prop:envelope} prevents the other coordinates from exceeding
$b_k$, so $M_k=b_k$ for all $k$ and $M_\infty=b_J-R_J$. For
$\theta\le2\beta-1$ the one-dimensional example $f(t)=t^2/2$, $c_0=1$ gives
$c_k=R_k$ directly. \qed

\paragraph{Constructing the starting point.} Given the required value $y$ at the
next step, only two candidates need to be tested: $t=y+h_k$ with $s=+1$ and
$t=y-h_k$ with $s=-1$; keep the one with $|t|\le b_k$ and either
$|t|\le\theta b_k$ or $st>0$. Lemma~\ref{lem:interval} guarantees that a
candidate exists. With rational parameters all tests are exact; this is the
routine \texttt{exact\_envelope\_witness} of the accompanying code, A particularly simple choice gives Example~\ref{ex:witness}.

\section{Matrix certificates and adaptive geometry}\label{app:matrix}
\subsection*{Proof of Theorem~\ref{th:hess}}
\begin{proof}
Let $u=x-x^*$; by the Newton--Leibniz formula $\nabla f(x)=\bar H(x)u$. Let $i$
be such that $|u_i|/w_i>\theta M$ with $M=\ninfw{u}$. Using $|u_j|\le w_jM$ and
the definition of $\theta_w$,
\begin{align*}
  \partial_if(x)\,u_i
  &=\bar H_{ii}u_i^2+\sum_{j\ne i}\bar H_{ij}u_ju_i
   \ \ge\ \bar H_{ii}u_i^2-|u_i|M\sum_{j\ne i}|\bar H_{ij}|w_j\\
  &\ \ge\ \bar H_{ii}|u_i|\bigl(|u_i|-\theta w_iM\bigr)\ >\ 0. \qquad\qed
\end{align*}
\end{proof}

\subsection*{Attainability of the minimum in Theorem~\ref{th:perron}}

Write $D=\diag(H_{11},\dots,H_{nn})$, $B=H-D$ and $A=D^{-1}|B|$, with $H$ symmetric and $H_{ii}>0$. The matrix
$S=D^{-1/2}|B|D^{-1/2}$ is symmetric, nonnegative and similar to $A$ (through
$A=D^{-1/2}SD^{1/2}$), so $\rho(A)=\rho(S)=\lambda_{\max}(S)$.

If the graph of the nonzero entries of $|B|$ is connected, then $S$ is
irreducible and by the Perron--Frobenius theorem the eigenvalue
$\lambda_{\max}(S)$ has a strictly positive eigenvector $v$; setting
$w=D^{-1/2}v$ we obtain
\[
  \theta_w(H)=\max_i\frac{(Aw)_i}{w_i}
  =\max_i\frac{\bigl(D^{-1/2}Sv\bigr)_i}{\bigl(D^{-1/2}v\bigr)_i}
  =\lambda_{\max}(S)=\rho(A),
\]
so the infimum is attained. Conversely, fix any positive weight vector $w$. The similar matrix
$\diag(w)^{-1}A\diag(w)$ has row sums $(Aw)_i/w_i$. The spectral radius is
bounded by the maximum absolute row sum, hence $\rho(A)\le\theta_w(H)$.

If the graph is disconnected, then after a permutation $S$ splits into a direct
sum of blocks $S^{(1)},\dots,S^{(m)}$ corresponding to the connected components,
together with isolated coordinates (zero $1\times1$ blocks). On each block of
size $\ge2$ take its own Perron vector $v^{(j)}>0$, and on the isolated
coordinates an arbitrary positive weight. The resulting $w>0$ gives
\[
  \theta_w(H)=\max_j\rho\bigl(S^{(j)}\bigr)=\rho(S)=\rho(A),
\]
since the spectrum of a direct sum is the union of the spectra and the rows of
$A$ corresponding to isolated coordinates vanish. Thus the minimum is attained
in the reducible case as well; note, however, that the glued vector is an
eigenvector of each block separately and, when the blocks have different
spectral radii, not of $A$. \qed

\subsection*{Proof of Theorem~\ref{th:quad-iff}}
\begin{proof}
\emph{Sufficiency.} If $\theta:=\theta_w(H)\le2\beta-1<1$, then by
Theorem~\ref{th:hess} $f\in\MSC_w(\theta,x^*,\Rn)$, and
Theorem~\ref{th:beta} (with $\lambda=0$) gives $M_k\le\beta^kr_0$.
Moreover, $\diag(w)H\diag(w)$ is symmetric, has positive diagonal and is
strictly diagonally dominant; Gershgorin's theorem makes it positive definite.
Congruence therefore also makes $H$ positive definite.

\emph{Necessity.} Put $a:=2\beta-1$ and suppose that for some row $i$
\[
  \theta_i:=\frac{\sum_{j\ne i}|H_{ij}|w_j}{H_{ii}w_i}>a .
\]
Choose $t$ with $a<t<\min\{\theta_i,1\}$ and set $c_0=x^*+u$, where
\[
  u_i=t\,r_0w_i,\qquad u_j=-\sgn(H_{ij})\,r_0w_j\quad(j\ne i)
\]
(for $H_{ij}=0$ either sign will do). Then $\ninfw{u}\le r_0$, i.e.
$x^*\in\Bx(c_0,r_0w)$, and
\[
  \partial_if(c_0)=(Hu)_i
  =H_{ii}t\,r_0w_i-\sum_{j\ne i}|H_{ij}|\,r_0w_j
  =H_{ii}r_0w_i\,(t-\theta_i)<0,
\]
whereas $u_i>0$. Hence the first step is wrong in coordinate $i$ (there is no
tie here, so the example does not depend on the tie-breaking rule), and by
Lemma~\ref{lem:lock}
\[
  \lim_{k\to\infty}\frac{|c_{k,i}-x^*_i|}{w_i}\ \ge\ t\,r_0-a\,r_0=(t-a)r_0>0 .
  \eqno\qed
\]
\end{proof}

\subsection*{Proof of Corollary~\ref{cor:hmat}}
\begin{proof}
If $\theta_{\mathrm{opt}}<1$ the minimum is attained at some $w^*>0$ by
Theorem~\ref{th:perron}, and~\eqref{eq:quad-exact} holds for
$\beta\ge(1+\theta_{\mathrm{opt}})/2$. Conversely, if $\theta_w(H)\ge1$ for all
$w>0$, then $\theta_w(H)>2\beta-1$ for every $\beta<1$, and
Theorem~\ref{th:quad-iff} forbids universal convergence.\qed
\end{proof}

\subsection*{Jacobi convergence does not imply the sign certificate}
Let $C$ be the symmetric $4\times4$ matrix with zero diagonal, all
upper-triangular entries $+1$ except $C_{12}=-1$. Its eigenvalues are
$\{-\sqrt5,-1,1,\sqrt5\}$. Thus $H=I+(2/5)C$ is positive definite and its
Jacobi iteration matrix $-(2/5)C$ has spectral radius $2\sqrt5/5<1$.
However, $A=(2/5)|C|$ has constant row sum $6/5$, so no fixed aspect and
$\beta<1$ provide universal sign-splitting convergence. The obstruction is
absolute cross-coupling; cancellation useful to Jacobi is lost by taking signs.

\begin{counterexample}[locked error on a strongly convex quadratic]
\label{cex:2x2}
Let $n=2$, $H=\bigl(\begin{smallmatrix}1&1\\1&2\end{smallmatrix}\bigr)$,
$x^*=0$, $c_0=(0.9,\,-1)$, $r_0=1$, $\beta=1/2$, $w=\mathbf1$, as in
Sect.~\ref{sec:early-example}. The limit is $x_\infty=(0.9,-0.5)$ with
\[
  \ninf{x_\infty-x^*}=0.9\,r_0,
  \qquad
  \frac{f(x_\infty)-f^*}{f(c_0)-f^*}=\frac{41}{101}\approx0.406 ,
\]
so the relative objective gap cannot be guaranteed below $0.406$ in this run. The cause is visible
from Theorem~\ref{th:quad-iff}: $\theta_{\mathbf1}(H)=1>2\beta-1=0$. At the same
time $\theta_{\mathrm{opt}}(H)=1/\sqrt2$ with $w^*=(1,\,1/\sqrt2)$, so the
problem \emph{is} solvable with the right box---and with the radius
$r_0=\ninfw{c_0-x^*}=\sqrt2$, since otherwise the solution is not in the box at
all (Fig.~\ref{fig:counterex}).
\end{counterexample}

\subsection*{Proof of Theorem~\ref{th:impossible}}
\begin{proof}
Write $u=c-x^*$, $\tilde u=u\oslash w$, $S=\mathbf1^{\!\top}u$ and
$v=u-\frac Sn\mathbf1$. The Hessian
$\mathbf1\mathbf1^{\!\top}+\epsilon(I-\mathbf1\mathbf1^{\!\top}/n)$ has
eigenvalues $n$ and $\epsilon>0$, so it is positive definite. Thus $\partial_if_\epsilon(c)=S+\epsilon v_i$. Let
$p=\arg\min_iw_i$ and $\gamma=2w_p/(\mathbf1^{\!\top}w)\le2/n$. Take
\[
  u_{0,j}=r_0w_j\ (j\ne p),\qquad u_{0,p}=-r_0w_p,
\]
so that $M_0=\ninf{\tilde u_0}=r_0$ (the solution is a vertex of the box) and
$S_0=r_0(\mathbf1^{\!\top}w)(1-\gamma)>0$.

Put $T=\min\{k:\ \beta^k<(1+\eta)\gamma\}$; since $(1+\eta)\gamma<1$ for
$n\ge5$, $\eta<1/4$, we have $T\ge1$. We show by induction that all signs equal
$+1$ for $k<T$. As long as this holds,
$\tilde u_{k+1}=\tilde u_k-(1-\beta)R_k\mathbf1$ and
\[
  S_k=S_0-(1-\beta^k)r_0(\mathbf1^{\!\top}w)
     =r_0(\mathbf1^{\!\top}w)\bigl(\beta^k-\gamma\bigr)
     \ \ge\ \eta\gamma\,r_0(\mathbf1^{\!\top}w)>0
  \qquad(k<T),
\]
because $\beta^k\ge(1+\eta)\gamma$. Moreover
$\ninf{v_k}\le\ninf{v_0}+r_0\ninf{w}=:V$, since at each step $v$ is shifted by
$-(1-\beta)R_k(w-\bar w\mathbf1)$ and the moduli of these shifts sum to at most
$r_0\ninf{w}$. Choosing $\epsilon<\eta\gamma r_0(\mathbf1^{\!\top}w)/V$ gives
$\partial_if_\epsilon(c_k)=S_k+\epsilon v_{k,i}>0$ for all $i$: there are no
ties, and the induction closes.

Along coordinate $p$ all these steps move \emph{away} from the target:
$\tilde u_{T,p}=-r_0-(1-\beta^T)r_0=-(2-\beta^T)r_0$. The total possible
displacement after step $T$ equals $\sum_{k\ge T}(1-\beta)R_k=R_T=\beta^Tr_0$,
whence
\[
  \lim_kM_k\ \ge\ (2-\beta^T)r_0-\beta^Tr_0=(2-2\beta^T)r_0
  \ >\ \bigl(2-2(1+\eta)\gamma\bigr)r_0 ,
\]
and $\gamma\le2/n$ gives the assertion.
No control of the signs after step $T$ is needed. The upper bound $2r_0$ follows
from $|c_{\infty,i}-x^*_i|\le|c_{0,i}-x^*_i|+\sum_k(1-\beta)R_kw_i\le2r_0w_i$.\qed
\end{proof}

\subsection*{Proof of Theorem~\ref{th:adaptive}}
\begin{proof}
Write $u=c_k-x^*$. If the sign in coordinate $i$ is wrong, or the derivative
vanishes while $u_i\ne0$, then
$H_{ii}|u_i|\le\sum_{j\ne i}|H_{ij}||u_j|\le H_{ii}(Ar_k)_i$; together with
$|u_i|\le r_{k,i}$ this gives $|u_i|\le t_{k,i}$, so after the step
$|u_i-h_{k,i}s_i|\le t_{k,i}+h_{k,i}=r_{k+1,i}$. If the sign is correct, then
$\bigl||u_i|-h_{k,i}\bigr|\le\max\{r_{k,i}-h_{k,i},h_{k,i}\}=r_{k+1,i}$, since
$h_{k,i}\le r_{k,i}/2$; the case $u_i=0$ is covered by the same estimate.
Nestedness follows from $h_k+r_{k+1}=r_k$. Finally, the map
$T(r)=\frac12\bigl(r+\min\{r,Ar\}\bigr)$ is monotone and positively homogeneous,
and $Aw\le qw$ gives $T(w)\le\frac{1+q}{2}w$; induction from $r_0\le C_0w$
yields~\eqref{eq:adapt-rate}.\qed
\end{proof}

\subsection*{One-step optimality in Theorem~\ref{th:adaptive}}
Fix a coordinate $i$, let $a=(Ar)_i$, and suppose a proposed half-size
$r'_i$ is smaller than $(r_i+\min\{r_i,a\})/2$. For $a>0$ choose
$t\in(0,r_i]$ with $t<a$ and $t>2r'_i-r_i$; this is possible by the strict
inequality. Set the centre error $u_i=t$ and
$u_j=-\sgn(H_{ij})r_j$ for $j\ne i$ (arbitrary signs when $H_{ij}=0$).
Then $(Hu)_i=H_{ii}(t-a)<0$: the sign is wrong, so the new error is
$t+r_i-r'_i>r'_i$.
If $a=0$, the assumed inequality is $r'_i<r_i/2$; use $u_i=0$ and either
tie, giving new error $r_i-r'_i>r'_i$. Conversely the half-size in
\eqref{eq:one-step-opt} is safe by the retention proof. This establishes
componentwise minimality for rules which choose $r'$ from $H,r$ before seeing
the signs. It does not compare more informative, sign-dependent rules.\qed

\subsection*{Proof of Corollary~\ref{cor:capped-limit}}
With gradient perturbation bounded by $H_{ii}\xi_i$, a wrong sign is possible
only if $|c_i-x_i^*|\le(Ar)_i+\xi_i$. The proof of retention therefore applies
with $t=\min\{r,Ar+\xi\}$; nestedness follows from $h+r'=r$.

Since $\rho(A)<1$, choose $w>0$ and $q<1$ with $Aw\le qw$; for example,
$w=(I-A)^{-1}\mathbf1$ is positive and
$q=\max_i(Aw)_i/w_i<1$. The map
$P(z)=\min\{r^{\mathrm{init}},Az+\xi\}$ maps the compact interval
$[0,r^{\mathrm{init}}]$ into itself and is a contraction in
$\|\cdot\|_{\infty,w}$ with constant at most $q$. It has a unique fixed point
$v$. In particular $v\le Av+\xi$; hence $T(v)=v$ for
$T(r)=(r+\min\{r,Ar+\xi\})/2$.
The map $T$ is monotone and $T(r)\le r$, so
$v\le r_{k+1}\le r_k\le r^{\mathrm{init}}$.

For any $v\le r\le r^{\mathrm{init}}$, put $d=r-v\ge0$.
If $v_i<r_i^{\mathrm{init}}$, the fixed-point equation gives
$v_i=(Av+\xi)_i$, and
\[
 0\le T(r)_i-v_i
 \le\tfrac12\bigl(d_i+(Ad)_i\bigr).
\]
If $v_i=r_i^{\mathrm{init}}$, then $r_i=v_i$ and the monotonicity sandwich
gives $T(r)_i=v_i$, so the same inequality holds. Induction proves
\eqref{eq:capped-rate}. Since $\rho((I+A)/2)=(1+\rho(A))/2<1$, it also proves
$r_k\to v$. Multiplying $(I-A)v\le\xi$ by the nonnegative inverse yields
$v\le (I-A)^{-1}\xi=:g$. If $r^{\mathrm{init}}\ge g$, then $P(g)=g$, so
uniqueness gives $v=g$.
Finally, the total displacement in coordinate $i$ is
$\sum_k h_{k,i}=r_i^{\mathrm{init}}-v_i<\infty$. The centres converge, and
passing to the limit in retention gives $|c_\infty-x^*|\le v$.\qed

\section{Comparisons, margins and residual floors}\label{app:comparison}
\begin{proposition}[relation between the classes]\label{prop:cmp-vs-csc}
If $f$ is differentiable and $f\in\CSC(x^*,B)$, then $f\in\CMP(x^*,B)$. The
converse fails: the function $F(t)=g(1-t)$ of Example~\ref{ex:tie} lies in
$\CMP(1,\R)$ but not in $\CSC(1,\R)$, since $F'(0)=0$. Moreover $\CMP$ contains $\sum_i|x_i-x^*_i|^p$ for every $p>0$.
This gives nondifferentiable examples for $0<p\le1$ and nonconvex examples for
$0<p<1$. Every differentiable member of $\CMP$ lies in $\CSC^\circ$; the
constant function shows that this inclusion cannot be reversed.
\end{proposition}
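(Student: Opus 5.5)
The statement has five parts, and I will prove them in the order they appear.

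\emph{First, $\CSC\subseteq\CMP$.} I would take $f\in\CSC(x^*,B)$, fix $i$ and $x_{-i}$, and look at the section $\phi(t)=f(x_{-i},t)$, which is differentiable on the interval of admissible $t$. On that interval $\phi'(t)(t-x_i^*)>0$ for $t\ne x_i^*$. Suppose $a<b\le x_i^*$. The mean value theorem gives $\phi(b)-\phi(a)=\phi'(\tau)(b-a)$ for some $\tau\in(a,b)$. Since $\tau<x_i^*$, we have $\phi'(\tau)<0$, and hence $\phi(a)>\phi(b)$. This argument covers the case $b=x_i^*$ as well, because $\tau$ lies in the open interval. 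The mirrored case $x_i^*\le a<b$ is handled the same way, so~\eqref{eq:cmp} holds. This is exactly the argument of Proposition~\ref{prop:csc-unimodal}, now including the endpoint at the target.

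\emph{Second, the converse fails.} I would use $F(t)=g(1-t)$ with $F'(t)=-(1-t)t^2$. Then $F'\le0$ on $(-\infty,1)$, and it vanishes only at the isolated points $0$ and $1$. A differentiable function whose derivative is nonpositive and vanishes only at isolated points is strictly decreasing there, which I would again justify with the mean value theorem on subintervals that avoid $0$, combined with continuity. So $F$ decreases strictly on $(-\infty,1]$, and it increases strictly on $[1,\infty)$ because $F'>0$ for $t>1$. Hence $F\in\CMP(1,\R)$. On the other hand $F'(0)=0$ with $0\ne1$, so~\eqref{eq:csc} fails at $t=0$.

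\emph{Third, the functions $\sum_i|x_i-x_i^*|^p$.} Each section is a constant plus $|t-x_i^*|^p$, and $s\mapsto s^p$ is strictly increasing on $[0,\infty)$ for every $p>0$. So each section is strictly decreasing up to and including $x_i^*$ and strictly increasing from it onwards. The function fails to be differentiable at $x_i=x_i^*$ whenever $p\le1$. For $0<p<1$ the one-dimensional section is not convex, since it is concave on each side of $x_i^*$, and therefore the function is not convex.

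\emph{Fourth, $\CMP\subseteq\CSC^\circ$ for differentiable $f$.} If $f$ is differentiable and in $\CMP$, each section is monotone on each side of $x_i^*$. The limit of the difference quotient therefore has the correct non-strict sign: $\phi'(t)\le0$ for $t<x_i^*$ and $\phi'(t)\ge0$ for $t>x_i^*$. This is precisely the condition $\CSC^\circ$.

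\emph{Fifth, this inclusion cannot be reversed.} A constant function has zero derivative everywhere, so it satisfies the non-strict condition and lies in $\CSC^\circ$. Its sections are constant, however, not strictly monotone, so it is not in $\CMP$.

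None of these steps looks like a real obstacle. The only point needing care is showing that the strict inequalities in~\eqref{eq:cmp} survive where the derivative vanishes. In the first part this is handled because $\tau$ lies in the open interval. In the counterexample it is handled because the zeros of $F'$ are isolated points.
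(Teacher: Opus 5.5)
Your proof is correct and follows essentially the paper's route: $\CSC\subseteq\CMP$ by the mean-value argument behind Proposition~\ref{prop:csc-unimodal} (you rightly check that the endpoint $x_i^*$ is included, as $\CMP$ requires), and the remaining claims by direct verification of section monotonicity, the non-strict derivative sign of a differentiable monotone section, and the constant function. The only thing to tighten is that nonconvexity for $0<p<1$ needs \emph{strict} concavity of $s\mapsto s^p$ on each side of $x_i^*$, since concavity alone would not exclude convexity.
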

\begin{proof}
The first claim is Proposition~\ref{prop:csc-unimodal}; the rest is a direct
verification of monotonicity of the sections. A differentiable monotone
section has the required non-strict derivative sign, while a constant section
is not strictly monotone.\qed
\end{proof}

\subsection*{Proof of Theorem~\ref{th:probe-sharp}}
Assume retention at step $k$ and fix a coordinate; abbreviate its error by
$u=c_i-x_i^*$ and half-size by $r$. If $|u|\ge qr$, the two probes lie on the
same monotone section side, possibly with one at the target. Strict
monotonicity including that endpoint gives the correct comparison. Therefore
$|u'|=||u|-(1-\beta)r|\le\beta r$ for $1/2\le\beta<1$.
If $|u|<qr$, either answer gives
$|u'|\le |u|+(1-\beta)r<(q+1-\beta)r\le\beta r$.
This also covers equal probe values. Induction proves sufficiency, including
$q=2\beta-1$ when $q>0$.

For necessity, $q>2\beta-1$ permits a target $t$ in the open interval
$(2\beta-1,q)\subset(0,1]$. The displayed $F_t$ in the main text belongs to
$\CMP(t,[-1,1])$ and satisfies $F_t(q)>F_t(-q)$ for the stated $C$.
Thus the first centre is $-(1-\beta)$; its distance from $t$ is
$t+1-\beta$, while all later travel is at most $\beta$.
The limiting error is at least $t-(2\beta-1)>0$. Its derivative is continuous
and piecewise linear with positive slopes $2$ and $2C$, so $F_t$ is strongly
convex with globally Lipschitz derivative. This proves necessity with no
reliance on a tie at the first step.\qed

\subsection*{Proof of Proposition~\ref{prop:polarization}}
\begin{proof}
Expand the quadratic form: the terms linear in $h$
give~\eqref{eq:polarization} and the quadratic ones cancel.\qed
\end{proof}

\subsection*{Proof of Theorem~\ref{th:cmp-lower}}
\begin{proof}
For $Q_0=\Bx(c_0,r_0w)$, a binary depth-$N$ decision tree has at most $2^N$
leaves. At any fixed leaf, the targets for which its output has weighted error
at most $\eps$ occupy volume at most $(2\eps)^n\prod_iw_i$. Exact success
for all targets therefore requires
$2^N(2\eps)^n\prod_iw_i\ge(2r_0)^n\prod_iw_i$.
For a randomised algorithm of fixed budget, apply the same upper bound on the
fraction of successful targets to each fixed random seed and integrate over
seeds. If success probability is at least $1-\delta$ for every target, its
average over the uniform target distribution is at least $1-\delta$.
Consequently $(1-\delta)\le2^N(\eps/r_0)^n$, proving both inequalities.
The oracle is binary even at equality. On spherical quadratics, symmetric
coordinate probes reveal the correct side whenever the coordinate error is
nonzero, so one comparison per coordinate performs exact halving; ties at the
target are harmless. This proves the stated attainability on that subclass.\qed
\end{proof}

\subsection*{Proof of Theorem~\ref{th:inexact}}
\begin{proof}
Induction. Assume $M_k\le R_k$. A wrong step in coordinate $i$ is possible only
if $|u_{k,i}|\le\theta M_k+\eta_k\le\theta R_k+(2\beta-1-\theta)R_k
=(2\beta-1)R_k$; adding the step $(1-\beta)R_k$ gives
$|u_{k+1,i}|\le\beta R_k=R_{k+1}$. For correct coordinates the estimate is
the one from Theorem~\ref{th:csc}.\qed
\end{proof}

\subsection*{Proof of Theorem~\ref{th:noise}}
\begin{proof}
In the normalised variables $u_i=(c_i-x^*_i)/w_i$ the step length is
$h_k=(1-\beta)R_k$. Outside the band the sign is correct; inside it
$|u_i-h_ks_i|\le\theta M_k+\eta_k+h_k$. Hence
\[
  M_{k+1}\le\max\{\,M_k-h_k,\ \theta M_k+\eta_k+h_k\,\},
\]
where a possible overshoot past zero is covered by the second term, which is at
least $h_k$. Substituting $M_k\le R_k+d_k$ and using
$\theta+1-\beta=\beta-\Delta$ gives
\[
  M_{k+1}\le R_{k+1}+\max\{\,d_k,\ \theta d_k+\eta_k-\Delta R_k\,\}=R_{k+1}+d_{k+1}.
\]
Put $a_K=\max_{j<K}[\eta_j-\Delta R_j]_+$. If $d\le a_K/(1-\theta)$ then
$\theta d+a_K\le a_K/(1-\theta)$, so the same bound propagates and
$d_K\le a_K/(1-\theta)$, which is~\eqref{eq:noise-finite}. The limit exists
because the step lengths are summable, and letting $K\to\infty$
gives~\eqref{eq:noise-limit}.

For sharpness take $f(x)=x^2/2$, $x^*=0$, $r_0=c_0=p>0$ and the biased oracle
$s(x)=\sgn(x-p)$ with an arbitrary resolution at $x=p$. With
$\eta=(1-\theta)p$ the condition ``outside the band'' reads
$|x|>\theta|x|+(1-\theta)p$, i.e. $|x|>p$, and there the biased oracle is indeed
correct, so the hypotheses hold on the whole line. But these iterates are
exactly sign splitting aimed at the point $p$, hence $c_k\to p$ and
$M_\infty=p=\eta/(1-\theta)$.\qed
\end{proof}

\subsection*{Proof of Proposition~\ref{prop:eta}}
\begin{proof}
Put $v=c_k-x^*$ in physical coordinate units. If
$|v_i|/w_i>\theta\|v\|_{\infty,w}+\eta_k$, the right-hand side of
\eqref{eq:growth} exceeds $a_iw_i\eta_k\ge b_{k,i}$. The estimated derivative
therefore has the same nonzero sign as $v_i$. For a quadratic,
\begin{align*}
 \sgn(v_i)(Hv)_i
 &\ge H_{ii}|v_i|-\sum_{j\ne i}|H_{ij}|w_j\|v\|_{\infty,w}\\
 &\ge H_{ii}w_i\bigl(|v_i|/w_i-\theta_w(H)\|v\|_{\infty,w}\bigr).
\end{align*}
The second relation is an inequality: a particular row's defect may be smaller
than the maximum row defect. At $v_i=0$ its right-hand side is nonpositive,
so the same growth inequality is valid there.\qed
\end{proof}

\subsection*{Finite-difference error}
For a coordinate section $\phi(t)=f(c+te_i)$ whose derivative is
$L_i$-Lipschitz on $[-h,h]$, the fundamental theorem of calculus gives
\[
 \left|\frac{\phi(h)-\phi(-h)}{2h}-\phi'(0)\right|
 \le \frac{1}{2h}\int_{-h}^h L_i|t|\,dt=\frac{L_ih}{2}.
\]
Two value errors of size at most $\nu$ contribute at most $\nu/h$.
Minimising $L_ih/2+\nu/h$ for $h>0$ gives
$h=\sqrt{2\nu/L_i}$ and $\sqrt{2L_i\nu}$ when both parameters are positive.
For $L_i=0$ the bias is zero; a finite available interval limits the radius.
The remaining conclusions follow from Proposition~\ref{prop:eta} and
Theorems~\ref{th:inexact}--\ref{th:noise}.\qed

\section{Modifications and their guarantees}
\label{app:mods}

Throughout this appendix $1/2\le\beta<1$. For each modification we answer two questions: which class it adds and what it
costs in oracle calls. The main-text summary is Table~\ref{tab:mods}.

\subsection{Overlapping splitting: the parameter $\beta$}
\label{sec:beta}

\begin{theorem}[$\beta$-splitting]\label{th:beta}
Let $\beta\in[1/2,1)$, $\theta\in[0,\beta)$, $x^*\in\Bx(c_0,r_0w)=:B_0$ and
$f\in\MSC_w(\theta,x^*,B_0)$. Put
$\lambda=\bigl[\frac{\theta+1-2\beta}{1-\beta}\bigr]_+\in[0,1)$. Then
\begin{equation}\label{eq:beta-bound}
  M_k\ \le\ \beta^kr_0+\lambda\bigl(1-\beta^k\bigr)r_0
\end{equation}
for all $k$. In particular: \textbf{(a)} if $\theta\le2\beta-1$ then
$\lambda=0$ and $M_k\le\beta^kr_0$; \textbf{(b)} with the choice
$\beta=(1+\theta)/2$, accuracy $\eps\in(0,r_0)$ is reached after
$K=\lceil\ln(r_0/\eps)/\ln(1/\beta)\rceil\le
\lceil\frac{2}{1-\theta}\ln(r_0/\eps)\rceil$ gradient evaluations;
\textbf{(c)} for $\beta=1/2$ formula~\eqref{eq:beta-bound} gives
$\lambda=2\theta$, and for that case Theorem~\ref{th:theta} is sharper, with
limiting value $\theta r_0$.
\end{theorem}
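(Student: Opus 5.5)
The plan is to reuse the scalar recursion~\eqref{eq:scalar}, $M_{k+1}\le\max\{M_k-h_k,\ \theta M_k+h_k\}$ with $h_k=(1-\beta)R_k$, which holds for every $f\in\MSC_w(\theta,x^*,B_0)$. A coordinate outside the band $|u_{k,i}|>\theta M_k$ has a correct sign and moves to $\bigl||u_{k,i}|-h_k\bigr|\le\max\{M_k-h_k,h_k\}$. A coordinate inside the band ends at most at $\theta M_k+h_k$, and this term also dominates $h_k$. I would then prove~\eqref{eq:beta-bound} by induction, writing the claimed bound as $\Phi_k:=R_k+\lambda(r_0-R_k)$. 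The base case $k=0$ is $M_0\le r_0$, which holds because $x^*\in B_0$. Since the right-hand side of the recursion is nondecreasing in $M_k$, it suffices to show that both branches evaluated at $M_k=\Phi_k$ are at most $\Phi_{k+1}=\beta R_k+\lambda(r_0-\beta R_k)$.

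For the first branch, $\Phi_k-h_k=\beta R_k+\lambda(r_0-R_k)\le\Phi_{k+1}$, because $\lambda\ge0$ and $R_k\ge\beta R_k$. The second branch is the main obstacle: I must check
\[
 \theta R_k+\theta\lambda(r_0-R_k)+(1-\beta)R_k\le\beta R_k+\lambda r_0-\lambda\beta R_k,
\]
that is,
\[
 (\theta+1-2\beta+\lambda\beta-\theta\lambda)R_k\le\lambda(1-\theta)r_0 .
\]
I would use $R_k\le r_0$ and split into two cases.
\begin{itemize}
\item If $\theta\le2\beta-1$, then $\lambda=0$, and the inequality reduces to $(\theta+1-2\beta)R_k\le0$, which is true.
\item If $\theta>2\beta-1$, substitute $\theta+1-2\beta=\lambda(1-\beta)$. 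The coefficient becomes $\lambda(1-\beta+\beta-\theta)=\lambda(1-\theta)\ge0$. Hence the left side is at most $\lambda(1-\theta)r_0$ because $R_k\le r_0$, which is exactly the right side.
\end{itemize}
So the choice of $\lambda$ is precisely what makes this step work with equality of coefficients. I also need to check that $\lambda<1$, which is equivalent to $\theta+1-2\beta<1-\beta$, i.e.\ $\theta<\beta$; this is the stated hypothesis. The induction then closes.

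The remaining items follow directly. Part (a) is the case $\lambda=0$. For (b), $\beta=(1+\theta)/2$ gives $\theta=2\beta-1$, so (a) applies and $M_K\le\beta^Kr_0\le\eps$. Moreover $\ln(1/\beta)=-\ln(1-(1-\theta)/2)\ge(1-\theta)/2$, which gives the bound on $K$. For (c), substituting $\beta=1/2$ gives $\lambda=2\theta$, and the comparison with Theorem~\ref{th:theta} is just a remark.
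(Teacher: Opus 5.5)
Your proposal is correct and follows essentially the paper's own proof. It uses the same induction hypothesis $M_k\le R_k+\lambda(r_0-R_k)$, with the correct-sign branch and the uncertain-band branch checked separately; the paper only does this coordinatewise rather than via the monotone scalar recursion, and verifies the band inequality by linearity in $R_k/r_0$ rather than by substituting $\theta+1-2\beta=\lambda(1-\beta)$, while parts (a)--(c) are handled identically, including $\ln(1/\beta)\ge1-\beta=(1-\theta)/2$.
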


\begin{proof}
As in Theorem~\ref{th:theta}, pass to the normalised variables
$u_k=(c_k-x^*)\oslash w$; there the recursion~\eqref{eq:err-rec} reads
$u_{k+1,i}=u_{k,i}-(1-\beta)R_ks_{k,i}$, and condition~\eqref{eq:msc} says that
$|u_{k,i}|>\theta\ninf{u_k}$ forces $s_{k,i}=\sgn(u_{k,i})$, i.e. a step towards
the target.

Put
\[
  \gamma_k:=R_k+\lambda\,(r_0-R_k),\qquad
  \lambda=\lambda(\theta,\beta)=\Bigl[\frac{\theta+1-2\beta}{1-\beta}\Bigr]_+ ,
\]
and prove by induction that $M_k=\ninf{u_k}\le\gamma_k$. The base case is
$M_0\le r_0=\gamma_0$. Assume the bound for $k$ and fix a coordinate $i$.

\emph{Case A: $|u_{k,i}|>\theta M_k$.} The sign is correct, hence
\[
  |u_{k+1,i}|=\bigl||u_{k,i}|-(1-\beta)R_k\bigr|
  \le\max\bigl\{\gamma_k-(1-\beta)R_k,\ (1-\beta)R_k\bigr\}.
\]
The first argument equals $\beta R_k+\lambda(r_0-R_k)\le\beta
R_k+\lambda(r_0-\beta R_k)=\gamma_{k+1}$, since $R_k\ge\beta R_k$ and
$\lambda\ge0$; the second is at most $\beta R_k\le\gamma_{k+1}$ because
$\beta\ge1/2$.

\emph{Case B: $|u_{k,i}|\le\theta M_k\le\theta\gamma_k$.} Then for any sign
(including an arbitrary resolution of a tie)
$|u_{k+1,i}|\le\theta\gamma_k+(1-\beta)R_k$, and the required inequality
$|u_{k+1,i}|\le\gamma_{k+1}$ is equivalent to
\[
  \theta\bigl(R_k+\lambda(r_0-R_k)\bigr)+(1-\beta)R_k
  \ \le\ \beta R_k+\lambda\bigl(r_0-\beta R_k\bigr).
\]
Writing $t=R_k/r_0\in(0,1]$ and dividing by $r_0$ this becomes
\[
  t\bigl[\theta(1-\lambda)+1-2\beta+\lambda\beta\bigr]\ \le\ \lambda(1-\theta).
\]
The right-hand side is nonnegative. If the bracket is non-positive the
inequality holds for all $t\in(0,1]$; otherwise it suffices to check $t=1$,
which gives
\[
  \theta+1-2\beta+\lambda\beta\le\lambda
  \iff
  \lambda\ \ge\ \frac{\theta+1-2\beta}{1-\beta},
\]
and the chosen $\lambda$ satisfies this. The induction is complete, and
\[
  M_k\le\gamma_k=\beta^kr_0+\lambda\bigl(1-\beta^k\bigr)r_0 .
\]
The assumption $\theta<\beta$ gives $\lambda<1$, since $\lambda<1$ is equivalent
to $\theta+1-2\beta<1-\beta$. Part~(a) is the case $\theta\le2\beta-1$, where
$\lambda=0$. Part~(b): for $\beta=(1+\theta)/2$ one has
$\ln(1/\beta)\ge1-\beta=(1-\theta)/2$, whence
$K\le\left\lceil\frac{2}{1-\theta}\ln(r_0/\eps)\right\rceil$. Part~(c): for $\beta=1/2$ the formula
gives $\lambda=2\theta$. \qed
\end{proof}

\keybox{\textbf{The trade-off.} The method with parameter $\beta$ tolerates a
defect $\theta\le2\beta-1$ but contracts at the rate $\beta^k$. The class widens
from $\theta=0$ (at $\beta=1/2$) to $\theta\to1$ (as $\beta\to1$), while the
iteration count grows as $\frac{2}{1-\theta}\ln\frac{r_0}{\eps}$ instead of
$\log_2\frac{r_0}{\eps}$. We stress that the factor $(1-\theta)^{-1}$ is a
property of \emph{this} scheme and not a proved information lower bound for the
class $\MSC(\theta)$; the latter is unknown to us
(Sect.~\ref{sec:discussion}).}

\subsection{Anisotropic boxes: diagonal preconditioning}
\label{sec:box}

By Proposition~\ref{lem:invariance} the choice of the aspect $w$ is exactly a
diagonal preconditioning, and Theorem~\ref{th:perron} identifies the optimal
choice, with minimal defect $\theta_{\mathrm{opt}}=\rho(A)$.
Together with Theorem~\ref{th:quad-iff} this yields the criterion of
solvability of a quadratic problem in its cleanest form
(Corollary~\ref{cor:hmat}). The accompanying costs---an enlarged radius and the
knowledge of $H$---are discussed in Remark~\ref{rem:cost-w}.

\subsection{Freezing coordinates}
\label{sec:freeze}

Lemma~\ref{lem:lock} says that what is fatal is not the sign error itself but
the \emph{obligation} to step. If no step is made along an ``uncertain''
coordinate and the corresponding half-size is not reduced, the error is not
locked.

\begin{definition}[\CUBEF\ and the class $\DSC_\epsilon$]\label{def:freeze}
Let $0<\epsilon\le1$. The method \CUBEF$(\beta,w,\epsilon)$ differs from
Algorithm~\ref{alg:cube} in that at step $k$ the set of active coordinates
\[
  S_k=\bigl\{i:\ |\partial_if(c_k)|\ge\epsilon\,\ninf{\nabla f(c_k)}\bigr\}
\]
is formed (with $S_k=[n]$ if $\nabla f(c_k)=0$), and the updates
$c_{k+1,i}=c_{k,i}-(1-\beta)r_{k,i}s_{k,i}$, $r_{k+1,i}=\beta r_{k,i}$ are
performed only for $i\in S_k$; for $i\notin S_k$ both $c_{k,i}$ and $r_{k,i}$
are left unchanged. The class $\DSC_\epsilon(x^*,B)$ consists of the functions
for which
\begin{equation}\label{eq:dsc}
 \begin{gathered}
 |\partial_if(x)|\ge\epsilon\|\nabla f(x)\|_\infty,\qquad x_i\ne x_i^*\\
 \Longrightarrow\quad \partial_if(x)(x_i-x_i^*)>0,
 \qquad x\in B,\quad i\in[n].
 \end{gathered}
\end{equation}
\end{definition}

The proviso $x_i\ne x^*_i$ in~\eqref{eq:dsc} is essential: without it the
condition would fail at the target, where the required strict product is zero. With the proviso everything is consistent and, in
particular, $\CSC(x^*,B)\subseteq\DSC_\epsilon(x^*,B)$ for every
$\epsilon\in(0,1]$.

\begin{theorem}[freezing]\label{th:freeze}
Let $x^*\in\Bx(c_0,r_0w)$ and let
$f\in\DSC_\epsilon\bigl(x^*,\Bx(c_0,r_0w)\bigr)$. Then
$|c_{k,i}-x^*_i|\le r_{k,i}$ for all $k$ and $i$, where
$r_{k,i}=\beta^{m_i(k)}r_0w_i$ and $m_i(k)=\#\{j<k:\ i\in S_j\}$ is the number
of activations of coordinate $i$. Consequently
$M_k\le\beta^{\,\min_im_i(k)}r_0$, and if every coordinate is activated in at
least a fraction $\nu$ of the steps, then $M_k\le\beta^{\nu k}r_0$.
\end{theorem}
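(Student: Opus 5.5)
The plan is an induction on $k$ for the coordinatewise retention invariant $|c_{k,i}-x^*_i|\le r_{k,i}$, with $r_{k,i}=\beta^{m_i(k)}r_0w_i$, followed by reading off the norm bound. I will verify the invariant separately for active and inactive coordinates. The base case $k=0$ is the hypothesis $x^*\in\Bx(c_0,r_0w)$, since $m_i(0)=0$.

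\emph{Nestedness.} Along the way I will check that every box stays inside the initial box, so that the hypothesis $f\in\DSC_\epsilon(x^*,\Bx(c_0,r_0w))$ applies at every centre $c_k$. For an active coordinate, the displacement $(1-\beta)r_{k,i}$ plus the new half-size $\beta r_{k,i}$ equals $r_{k,i}$, exactly as in Lemma~\ref{lem:nested}. For an inactive coordinate, the interval is unchanged. Hence $\Bx(c_{k+1},r_{k+1})\subseteq\Bx(c_k,r_k)\subseteq B_0$, and in particular $c_k\in B_0$.

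\emph{Inductive step.} Assume $|c_{k,i}-x^*_i|\le r_{k,i}$ for every $i$ and fix $i$.
\begin{itemize}
\item If $i\notin S_k$, neither $c_{k,i}$ nor $r_{k,i}$ changes, and $m_i(k+1)=m_i(k)$, so the invariant persists trivially.
\item If $i\in S_k$ and $\nabla f(c_k)\ne0$, then $|\partial_if(c_k)|\ge\epsilon\ninf{\nabla f(c_k)}>0$. Whenever $c_{k,i}\ne x^*_i$, condition~\eqref{eq:dsc} at $x=c_k$ gives $s_{k,i}=\sgn(c_{k,i}-x^*_i)$, a correct step. The computation of Theorem~\ref{th:csc} then yields $|c_{k+1,i}-x^*_i|=\bigl||e_{k,i}|-(1-\beta)r_{k,i}\bigr|\le\beta r_{k,i}=r_{k+1,i}$.
\item If $c_{k,i}=x^*_i$, either sign gives error $(1-\beta)r_{k,i}\le\beta r_{k,i}$, because $\beta\ge1/2$.
\item If $\nabla f(c_k)=0$, then $S_k=[n]$ and every derivative is zero. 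This case needs care, since \eqref{eq:dsc} is stated with $\ge$ and is then satisfied vacuously by $0\ge0$. Condition~\eqref{eq:dsc} would force $0\cdot(x_i-x^*_i)>0$ for any $i$ with $x_i\ne x_i^*$, which is impossible. Hence $c_k=x^*$, and every tie resolution keeps each coordinate error at most $(1-\beta)r_{k,i}\le r_{k+1,i}$.
\end{itemize}
This closes the induction and gives $|c_{k,i}-x^*_i|\le r_{k,i}$ for all $k$ and $i$.

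\emph{Consequences.} Dividing by $w_i$ and taking the maximum over $i$ gives $M_k\le\max_i\beta^{m_i(k)}r_0=\beta^{\min_im_i(k)}r_0$. If every coordinate is activated at least $\nu k$ times before step $k$, then $\min_im_i(k)\ge\nu k$, and monotonicity of $\beta^t$ yields $M_k\le\beta^{\nu k}r_0$.

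The only delicate point is the degenerate case $\nabla f(c_k)=0$: one must make sure the convention $S_k=[n]$ does not let an admissible tie violate retention. The argument above shows it cannot, because the class assumption forces a zero gradient to occur only at the target.
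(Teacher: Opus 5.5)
Your proof is correct and follows the paper's argument: induction on the coordinatewise invariant $|e_{k,i}|\le r_{k,i}$. Inactive coordinates are unchanged, the case $e_{k,i}=0$ follows from $(1-\beta)r_{k,i}\le\beta r_{k,i}$, and active coordinates with $e_{k,i}\ne0$ get a correct sign from~\eqref{eq:dsc}, so the computation of Theorem~\ref{th:csc} applies. The paper leaves implicit two points you make explicit: nestedness, which ensures that~\eqref{eq:dsc} on $\Bx(c_0,r_0w)$ applies at every $c_k$, and the case $\nabla f(c_k)=0$, where $0\ge\epsilon\cdot0$ makes every coordinate active and the strict conclusion of~\eqref{eq:dsc} then forces $c_k=x^*$.
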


\begin{proof}
Induction on $k$ with the invariant $|e_{k,i}|\le r_{k,i}$. For $i\notin S_k$
neither $c_{k,i}$ nor $r_{k,i}$ changes. For $i\in S_k$ with $e_{k,i}=0$ we get
$|e_{k+1,i}|=(1-\beta)r_{k,i}\le\beta r_{k,i}$; for $e_{k,i}\ne0$
condition~\eqref{eq:dsc} gives a correct non-degenerate sign and the computation
of Theorem~\ref{th:csc} applies.\qed
\end{proof}

Note that the guarantee is stated through the number of activations and does not
turn into unconditional linear convergence without a further assumption: a
permanently frozen coordinate is never refined. Numerical improvement from freezing does not establish membership in $\DSC_\epsilon$; these are different statements.

\subsection{Smoothing: global optimisation of multimodal functions}
\label{sec:smooth}

\begin{definition}[\CUBES]\label{def:smooth}
The method \CUBES$(\beta,w,\tau,m)$ uses, instead of $\nabla f(c_k)$, the
estimate $\hat g_k=\frac1m\sum_{t=1}^m\nabla f(c_k+\tau u_t)$ with independent
$u_t\sim\mathcal{U}[-1,1]^n$ and a \emph{fixed} radius $\tau$ (independent of
the iteration). Here $f$ is $C^1$ on a neighbourhood of all probe boxes, so
its locally bounded continuous gradient may be averaged under the integral.
For $m=\infty$ (an exact smoothed oracle)
$\hat g_k=\nabla f_\tau(c_k)$, where
$f_\tau(x)=\E_{u}f(x+\tau u)$.
\end{definition}

\begin{theorem}[reduction to the smoothed problem]\label{th:smooth}
Let $f_\tau\in\CSC(x^*_\tau,B)$, where $x^*_\tau$ is the minimiser of $f_\tau$,
and let $x^*_\tau\in\Bx(c_0,r_0w)\subseteq B$. Then \CUBES\ with an exact
smoothed oracle is exactly \CUBE\ applied to $f_\tau$, hence
$\ninfw{c_k-x^*_\tau}\le\beta^kr_0$; the error relative to the original target
does not exceed $\beta^kr_0\|w\|_\infty+\ninf{x^*_\tau-x^*}$.\qed
\end{theorem}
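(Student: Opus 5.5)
The plan is to show that the smoothed method is literally \CUBE\ run on the deterministic function $f_\tau$, and then quote Theorem~\ref{th:csc}. First I verify that the exact smoothed oracle returns $\nabla f_\tau(c_k)$. Because $f$ is $C^1$ on a neighbourhood of every probe box $c+\tau[-1,1]^n$, its gradient is continuous and therefore bounded on the compact set of probes. Dominated convergence then allows differentiation under the expectation, which gives $\nabla f_\tau(x)=\E_u\nabla f(x+\tau u)$. As a consequence, $\sgn\hat g_k=\sgn\nabla f_\tau(c_k)$ at every centre. A tie in $\hat g_k$ is a tie in $\nabla f_\tau(c_k)$, so any admissible tie rule for \CUBES\ is also an admissible tie rule for \CUBE\ on $f_\tau$. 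The update formulas coincide, so the two trajectories agree step by step by induction.

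Next I apply Theorem~\ref{th:csc} to $f_\tau$, with target $x^*_\tau$ and box $B_0=\Bx(c_0,r_0w)$. The hypothesis $f_\tau\in\CSC(x^*_\tau,B)$ with $B_0\subseteq B$ restricts to $f_\tau\in\CSC(x^*_\tau,B_0)$, since the condition is pointwise. We also have $x^*_\tau\in B_0$. The theorem then gives $\ninfw{c_k-x^*_\tau}\le\beta^kr_0$ for every tie rule. Note that the differentiability required by Definition~\ref{def:csc} holds on a neighbourhood of $B_0$, by the regularity just established.

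To pass to the original target, I use that a weighted bound implies an unweighted one: $|c_{k,i}-x^*_{\tau,i}|\le w_i\beta^kr_0\le\|w\|_\infty\beta^kr_0$. The triangle inequality in $\|\cdot\|_\infty$ then yields $\ninf{c_k-x^*}\le\|w\|_\infty\beta^kr_0+\ninf{x^*_\tau-x^*}$.

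The only step that needs care is the interchange of gradient and expectation, which is what makes "exactly \CUBE\ applied to $f_\tau$" hold including ties. The $C^1$ assumption on a neighbourhood of the probe boxes, stated in Definition~\ref{def:smooth}, handles it. Everything else is a direct citation.
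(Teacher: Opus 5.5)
Your proof is correct and follows the paper's own (implicit) argument: Definition~\ref{def:smooth} already uses the $C^1$ hypothesis to identify the exact smoothed oracle with $\nabla f_\tau(c_k)$, after which Theorem~\ref{th:csc} applied to $f_\tau$ with target $x^*_\tau$, followed by the weighted-to-unweighted bound and the triangle inequality, gives both estimates. One minor point: your remark that differentiability of $f_\tau$ near $B_0$ follows ``by the regularity just established'' is unnecessary and slightly overreaching. That differentiability is already part of the hypothesis $f_\tau\in\CSC(x^*_\tau,B)$, whereas the $C^1$ assumption on $f$ only concerns neighbourhoods of the probe boxes.
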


The content of Theorem~\ref{th:smooth} depends on how much smoothing widens the
class. The next proposition is a completely explicit example in which smoothing
turns a function with exponentially many local minima into a function of class
$\CSC$.

\begin{proposition}[a model multimodal family]\label{prop:rastrigin}
Let
\[
  f(x)=\|x-x^*\|_2^2+A\sum_{i=1}^n\bigl(1-\cos2\pi(x_i-x^*_i)\bigr),
  \qquad A>0 .
\]
Then $\partial_if_\tau(x)=2(x_i-x^*_i)+c(\tau)\sin2\pi(x_i-x^*_i)$ with
\[
  c(\tau)=2\pi A\,s(\tau),\qquad s(\tau)=\frac{\sin2\pi\tau}{2\pi\tau},
\]
where $s(0)=1$, and $f_\tau\in\CSC(x^*,\Rn)$ if and only if
\begin{equation}\label{eq:smooth-window}
  -\frac1\pi\ \le\ c(\tau)\ <\ c_+,
  \qquad
  c_+=\min_{t\in(1/2,1)}\frac{2t}{-\sin2\pi t}=1.46528\ldots
\end{equation}
In particular: \textbf{(i)} for $\tau\ge\pi A$ the condition always holds, since
$|c(\tau)|\le A/\tau\le1/\pi$; \textbf{(ii)} for $\tau=k/2$, $k\in\mathbb{N}$,
with $k\ge1$, one has $s(\tau)=0$ and $f_\tau(x)=\|x-x^*\|^2+\mathrm{const}$: a ``resonant''
radius annihilates the harmonic exactly, whatever the amplitude $A$.
\end{proposition}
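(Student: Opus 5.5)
The proof has three steps: compute the smoothed gradient, reduce membership in $\CSC$ to a scalar sign condition, and solve that condition in $c$.

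\textbf{Smoothed gradient.} By translation invariance we may take $x^*=0$. The function is separable and $u$ has independent uniform coordinates, so
\[
 f_\tau(x)=\sum_i\E\bigl[(x_i+\tau u_i)^2+A(1-\cos2\pi(x_i+\tau u_i))\bigr].
\]
Differentiation under the expectation is allowed because the integrand is smooth with bounded derivatives. For the quadratic part, $\E u_i=0$ gives derivative $2x_i$. For the harmonic part, the derivative is $2\pi A\,\E\sin2\pi(x_i+\tau u_i)$. Expanding $\sin(a+b)$ and using that $\sin(2\pi\tau u)$ is odd while $\E\cos(2\pi\tau u)=\frac{1}{2}\int_{-1}^1\cos(2\pi\tau u)\,du=s(\tau)$, we get $2\pi A\,s(\tau)\sin2\pi x_i$. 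This is the stated formula for $\partial_if_\tau$.

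\textbf{Reduction to one variable.} Since $\partial_if_\tau$ depends only on $t=x_i$, membership $f_\tau\in\CSC(0,\Rn)$ is equivalent to the scalar condition
\[
 \phi(t):=2t^2+c\,t\sin2\pi t>0\quad\text{for all }t\ne0.
\]
The function $\phi$ is even, so it suffices to take $t>0$. Dividing by $t$, the condition becomes $2t+c\sin2\pi t>0$ for $t>0$.

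\textbf{Solving for $c$.} The case $c=0$ is trivial; otherwise split by sign.

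\emph{Case $c>0$.} Trouble arises only where $\sin2\pi t<0$, that is, on the intervals $(m+\frac12,m+1)$. There the condition reads $c<2t/(-\sin2\pi t)$. On $(m+\frac12,m+1)$ the denominator takes the same values as on $(\frac12,1)$ but the numerator is larger, so the infimum over all $t$ is the infimum over $(\frac12,1)$. The ratio tends to $+\infty$ at both endpoints, so this infimum is an attained minimum $c_+$. Hence the condition holds exactly when $c<c_+$; at $c=c_+$ the minimiser gives $\phi=0$, which is why the upper endpoint is strict. The numerical value $1.465\ldots$ comes from the stationarity equation $\sin2\pi t=2\pi t\cos2\pi t$, i.e.\ $\tan2\pi t=2\pi t$ with $t\in(\frac12,\frac34)$.

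\emph{Case $c<0$.} Write $c=-a$ with $a>0$; the condition becomes $2t>a\sin2\pi t$ for $t>0$. Since $\sin y<y$ for $y>0$, we have $a\sin2\pi t<2\pi at\le2t$ whenever $a\le1/\pi$. If $a>1/\pi$, then near $t=0^+$ the expression is $2t-a\sin2\pi t=(2-2\pi a)t+O(t^3)<0$, so the condition fails. Hence the lower endpoint $c=-1/\pi$ is included.

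\textbf{Corollaries.} For (i), use $|s(\tau)|\le1/(2\pi\tau)$, which gives $|c|\le A/\tau\le1/\pi<c_+$ whenever $\tau\ge\pi A$. For (ii), if $\tau=k/2$ then $\sin2\pi\tau=\sin k\pi=0$, so $s(\tau)=0$. The expectation of the cosine term then reduces to a constant, so $f_\tau=\|x-x^*\|^2+\mathrm{const}$.

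The main obstacle is the $c>0$ case: showing that the infimum over all periods equals the minimum over the first negative half-period $(\frac12,1)$, that this minimum is attained, and pinning down its numerical value. The remaining steps are routine.
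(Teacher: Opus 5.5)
Your proposal is correct and follows essentially the same route as the paper: you average the separable integrand to obtain the smoothed derivative, reduce membership in $\CSC$ to positivity of $2t+c\sin2\pi t$ for $t>0$, and split by the sign of $c$, using the first negative half-period $(\tfrac12,1)$ when $c>0$ and the inequality $\sin y<y$ together with the expansion at $t=0$ when $c<0$. You also supply details the paper leaves implicit: the period-shift comparison, attainment of the minimum from the blow-up of the ratio at both endpoints, and the equation $\tan2\pi t=2\pi t$ that locates the minimiser.
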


\begin{proof}
The formula for $\partial_if_\tau$ follows by averaging over independent
coordinates. Condition~\eqref{eq:csc} for $f_\tau$ is equivalent to
$h(t):=2t+c\sin2\pi t>0$ for all $t>0$. For $c\ge0$ a violation is possible only
where $\sin2\pi t<0$, i.e. $t\in(m+\frac12,m+1)$; the inequality
$c<2t/(-\sin2\pi t)$ on all such intervals is equivalent to $c<c_+$, the minimum
being attained on the first one. For $c<0$, positivity near $0$ requires $c\ge-1/\pi$.
This condition is also sufficient: $\sin z<z$ for $z>0$ gives
$2t+c\sin(2\pi t)>0$ whenever the sine is positive and $|c|\le1/\pi$;
when the sine is nonpositive the conclusion is immediate. In particular the
boundary is included, since $h(t)=(2\pi t-\sin2\pi t)/\pi>0$ for $t>0$.
The upper boundary stays strict,
since at $c=c_+$ the function $h$ acquires a zero.\qed
\end{proof}

\begin{remark}[how many minima, exactly]\label{rem:minima}
Let $q(A)$ count the roots of the section derivative, and let $m(A)$ be the
number of \emph{local minima} of a one-dimensional section. All roots lie in
$|t|\le\pi A$, so for a large enough $R$ the function $f$ has exactly $q(A)^n$
critical points in a cube of radius $R$ and exactly $m(A)^n$ local minima. The
identity $m(A)=(q(A)+1)/2$ holds only when all roots are simple; at exceptional
amplitudes, described by $F'(t)=F''(t)=0$ for $F(t)=t^2+A(1-\cos2\pi t)$, a
double root appears without a sign change of the derivative and the formula
fails. The first such amplitude is explicit: the double-root condition is
equivalent to $\tan z=z$ with $z=2\pi t$, whence $z_0\approx4.493409$,
$t_0=z_0/(2\pi)\approx0.715148$ and
$A_{\mathrm{crit}}=c_+/(2\pi)\approx0.233208$. At $A=A_{\mathrm{crit}}$ there
are three critical points ($-t_0$, $0$, $t_0$), so $q=3$, but only one local
minimum, whereas $(q+1)/2$ would give two. Note also that
$A_{\mathrm{crit}}$ is the threshold of multimodality, while the threshold of
convexity is $1/(2\pi^2)\approx0.0507$ (from
$F''(t)=2+4\pi^2A\cos2\pi t\ge2-4\pi^2A$): for
$1/(2\pi^2)<A<A_{\mathrm{crit}}$ the function is already non-convex but still
has a unique minimiser.
\end{remark}

\begin{remark}[which oracle is being paid for]\label{rem:mc}
Theorem~\ref{th:smooth} is proved for the \emph{exact} oracle $\nabla f_\tau$.
For finite $m$ the estimate is noisy, and the signs may be unreliable when
$|\partial_if_\tau(c_k)|$ drops to the level $\sigma_{\mathrm{MC}}/\sqrt m$,
where $\sigma_{\mathrm{MC}}$ is the standard deviation of one summand. For the
family of Proposition~\ref{prop:rastrigin} the gradient component at a random
point equals $2(x_i-x^*_i)+2\tau u_i+2\pi A\sin(\cdot)$, whence
\[
  \sigma_{\mathrm{MC}}\ \le\ \frac{2\tau}{\sqrt3}+2\pi A
\]
(the first term is the contribution of the linear part, with
$\mathrm{Var}(2\tau u_i)=4\tau^2/3$); for large $\tau$ it dominates. The product
$mK$ is therefore the cost of a particular run, \emph{not} a proved complexity
of reaching $\eps$, as long as $m$ is not tied to $\eps$ and $\delta$; a quantitative guarantee additionally needs a concentration bound and a
growth condition converting gradient error to a displacement band, as in
Proposition~\ref{prop:eta} and Theorem~\ref{th:inexact}. A standard-deviation
bound alone does not yield a simultaneous sign guarantee. Accordingly, in Table~\ref{tab:mods} the two costs are
listed separately, and we do not claim that smoothing is superior to multistart
in general: only regimes with an equally defined oracle cost are comparable.
\end{remark}

\begin{remark}[two queries instead of averaging]\label{rem:twoquery}
For the same family the harmonic can be annihilated \emph{exactly} by two
ordinary gradient queries. With $v=\tfrac14\mathbf1$,
\begin{equation}\label{eq:twoquery}
  \tfrac12\bigl(\nabla f(x+v)+\nabla f(x-v)\bigr)=2(x-x^*),
\end{equation}
because the sines with arguments $2\pi(x_i-x^*_i)\pm\pi/2$ cancel. This is not
the same as uniform smoothing (only the signs of the resulting field agree) but
a special exact sign oracle for a specific family, and it requires the ability
to query the gradient at points that may leave the initial box. Given the \emph{values} of both gradients and the normalisation of the
quadratic part, \eqref{eq:twoquery} recovers the solution immediately,
$x^*=x-\bar g(x)/2$. Hence~\eqref{eq:twoquery} is useful as a control example on
the cost of an oracle, not as an argument for the superiority of box splitting
in global optimisation.
\end{remark}

\subsection{A noisy sign oracle and majority voting}
\label{sec:vote}

\begin{definition}[noise model]\label{def:noise}
On a query at $x$ the oracle returns $\hat s\in\{-1,1\}^n$ such that for every
coordinate with $\partial_if(x)\ne0$
\[
  \Prob\bigl[\hat s_i=\sgn\partial_if(x)\,\big|\,\mathcal{H}\bigr]
  \ \ge\ \tfrac12+\gamma
\]
conditionally on any history $\mathcal{H}$; the answers are conditionally
independent across queries, and for the bound~\eqref{eq:vote-max} independence
across coordinates is not needed. In coordinates with $\partial_if(x)=0$ the
answer is arbitrary and harmless. The method \CUBEV\ makes $m$ (odd) queries and
takes the coordinatewise majority. We stress that a constant $\gamma$ is a
\emph{modelling assumption}: it does not follow automatically from additive
noise in the values or in the gradient, because the gap between the compared
quantities vanishes near the solution (Sect.~\ref{sec:inexact}).
\end{definition}

\begin{theorem}[voting]\label{th:vote}
Let $f\in\CSC(x^*,B_0)$ with $x^*\in B_0=\Bx(c_0,r_0w)$, and let $p_m$ be an
\emph{upper bound}, uniform over points and histories, on the conditional
probability that the majority errs in a given coordinate with
$\partial_if\ne0$; by Hoeffding's inequality $p_m=e^{-2\gamma^2m}$ is
admissible. Then:
\begin{enumerate}[leftmargin=*,label=\textup{(\roman*)},itemsep=1pt]
\item \textup{(coordinatewise)}
  $\displaystyle\E\lim_{k\to\infty}\frac{|c_{k,i}-x^*_i|}{w_i}\le2p_mr_0$;
\item \textup{(in the maximum norm)}
  \begin{equation}\label{eq:vote-max}
    \E\lim_{k\to\infty}M_k\ \le\ 2r_0\min\{1,\,np_m\},
  \end{equation}
  and, if the answers are conditionally independent across coordinates, also
  $\E\lim_kM_k\le2r_0\bigl[1-(1-p_m)^n\bigr]$; for a finite $K$ the remainder
  $\beta^Kr_0$ is to be added;
\item \textup{(with high probability)} for any $K$ and $\delta\in(0,1)$, if
  $m\ge\frac{1}{2\gamma^2}\ln\frac{nK}{\delta}$, then with probability at least
  $1-\delta$ all signs on the first $K$ steps are correct and
  $M_K\le\beta^Kr_0$.
\end{enumerate}
\end{theorem}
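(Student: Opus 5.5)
The plan is to use the coordinatewise error account of Lemma~\ref{lem:master} as the main tool. Since $f\in\CSC(x^*,B_0)$, any coordinate with $e_{k,i}\ne0$ has $\partial_if(c_k)\ne0$ with the correct sign. A step can therefore be wrong in coordinate $i$ only if the majority of the $m$ votes errs there. Let $W_i$ be the random set of wrong steps in coordinate $i$. Lemma~\ref{lem:master} gives, pathwise,
\[
 \lim_k\frac{|e_{k,i}|}{w_i}\le\sum_{k\in W_i}\min\{2(1-\beta)R_k,\ |e_{k,i}|/w_i\}\le\sum_{k\ge0}2(1-\beta)R_k\mathbf 1\{k\in W_i\}.
\]
The first inequality uses only $x^*\in B_0$. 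Coordinates where the derivative vanishes have $e_{k,i}=0$. By Lemma~\ref{lem:master} they do not increase $d_{k,i}$, so they never enter $W_i$.

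For part (i), take expectations. The event $\{k\in W_i\}$ requires $e_{k,i}\ne0$ and a majority error at step $k$. Conditioning on the history up to step $k$, which determines $c_k$ and hence whether $\partial_if(c_k)\ne0$, the uniform bound on the conditional error probability gives $\Prob(k\in W_i)\le p_m$. Summing yields $\E\lim_k|e_{k,i}|/w_i\le 2(1-\beta)p_m\sum_kR_k=2p_mr_0$.

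For part (ii), two pathwise facts are needed. First, $M_\infty\le 2r_0$ always, because $M_0\le r_0$ and the total travel is $r_0$ (Lemma~\ref{lem:nested}). Second, $M_\infty=0$ on the event that no wrong step ever occurs in any coordinate, since then every $d_{k,i}$ stays $0$. Hence
\[
 \E M_\infty\le 2r_0\,\Prob(\exists i:\ W_i\neq\emptyset).
\]
Bounding this probability by a union bound over coordinates and steps would produce a factor involving the sum over $k$, which is too crude. The main obstacle is therefore to get a probability of order $np_m$ that does not depend on the time horizon.

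To overcome this, I would weight the steps. I would bound the error coordinatewise through the sum displayed above and use
\[
 M_\infty\le\min\Bigl\{2r_0,\ \sum_i\sum_k2(1-\beta)R_k\mathbf 1\{k\in W_i\}\Bigr\}.
\]
Taking expectations of the inner sum gives at most $n\cdot2p_mr_0$, exactly as in (i). Combined with the deterministic bound $2r_0$, this yields $\E M_\infty\le2r_0\min\{1,np_m\}$. I expect this weighted sum to be the right route rather than the probability of any error.

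For the conditionally independent version, the target is $\E M_\infty\le2r_0[1-(1-p_m)^n]$. I would use $\max_i X_i$ with $X_i\in[0,2r_0]$ and $\E X_i\le2p_mr_0$, and try to show $\E\max_iX_i\le2r_0\,\E\max_i(X_i/2r_0)$. The plan is a stochastic-domination or coupling argument per step against independent Bernoulli$(p_m)$ errors with geometric weights. The key inequality is $\E\max_iY_i\le1-\prod_i(1-\E Y_i)$ for $[0,1]$-valued variables that are independent, or dominated by independent ones. This is the delicate point, and I would first check it step by step via the conditional independence. For finite $K$, I would add the remaining budget: $M_K\le M_\infty$-type bound plus $R_K$ follows from $|e_{K,i}|\le d_{K,i}+r_{K,i}$.

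For part (iii), I would apply Hoeffding's inequality for each coordinate and step, giving a majority-error probability of at most $e^{-2\gamma^2m}\le\delta/(nK)$. A union bound over the $nK$ pairs shows that all signs in the first $K$ steps are correct with probability at least $1-\delta$. On that event, the induction of Theorem~\ref{th:csc} gives $M_K\le\beta^Kr_0$.
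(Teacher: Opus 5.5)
Parts (i) and (iii) follow the paper's proof essentially verbatim. You use the pathwise bound from Lemma~\ref{lem:master} with weights $2(1-\beta)R_k$, note that under $\CSC$ a wrong step requires a majority error, and condition on the history. For (iii) you use Hoeffding, a union bound over the $nK$ pairs, and Theorem~\ref{th:csc}. Your union-bound half of (ii), via $\E\min\{2r_0,\sum_iX_i\}\le\min\{2r_0,\sum_i\E X_i\}$, is also correct. The paper bounds $\lim_kM_k$ by $\sum_k2(1-\beta)R_k\mathbf 1\{\text{some coordinate is wrong at step }k\}$ and applies the union bound inside each step; this gives the same constant.

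The gap is the conditionally independent bound $2r_0[1-(1-p_m)^n]$, which you leave open as ``the delicate point''. Your key inequality $\E\max_iY_i\le1-\prod_i(1-\E Y_i)$ holds for \emph{independent} $Y_i\in[0,1]$, since $\max_iY_i\le1-\prod_i(1-Y_i)$. Your run-level accounts $Y_i=X_i/(2r_0)$ are not independent, however. The model lets the error probabilities, anything up to $p_m$, depend on the whole history, including past errors in other coordinates.

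Here is an example where the inequality fails for the actual accounts. Start at a point where the relevant errors are nonzero. Coordinate~1 errs only at step~0, with probability $p$. Coordinate~2 errs only at step~1, with probability $p$ if coordinate~1 did not err and $0$ otherwise. Per-step conditional independence holds and the two events are disjoint. Hence $\E\max_iY_i=\E Y_1+\E Y_2>1-(1-\E Y_1)(1-\E Y_2)$.

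The inequality must instead be applied to dominating independent variables, with \emph{their} means in the product:
\begin{itemize}
\item Generate the step-$k$ error in coordinate $i$ as $\mathbf 1\{U_{k,i}\le p_{k,i}(\mathcal H_k)\}$ with i.i.d.\ uniforms $U_{k,i}$. This representation is legitimate precisely because of per-step conditional independence.
\item Put $Z_i=\sum_k(1-\beta)\beta^k\mathbf 1\{U_{k,i}\le p_m\}$.
\item Then $Y_i\le Z_i$, the $Z_i$ are independent with $\E Z_i=p_m$, and $\E\max_iY_i\le\E\max_iZ_i\le1-(1-p_m)^n$.
\end{itemize}

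The paper avoids any coupling. It exchanges the maximum and the sum,
\[
\max_i\sum_k a_k\mathbf 1\{k\in W_i\}\le\sum_k a_k\mathbf 1\{\exists i:\ k\in W_i\},
\]
and uses conditional independence only within step $k$ to get $\Prob(\exists i:\ k\in W_i\mid\mathcal H_k)\le1-(1-p_m)^n$. Summing $a_k=2(1-\beta)R_k$ then gives $2r_0[1-(1-p_m)^n]$ in one line.

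Separately, your displayed relation $\E\max_iX_i\le2r_0\,\E\max_i(X_i/2r_0)$ is an identity and carries no content.
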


\begin{proof}
(i) By Lemma~\ref{lem:master} the final error in coordinate $i$ is at most
\[
 \sum_{k\ge0}2(1-\beta)r_{k,i}\,
 \mathbf1\{\text{step }k\text{ is wrong in coordinate }i\}.
\]
Since
$f\in\CSC$, a step can be wrong only through an oracle error, whose conditional
probability does not exceed $p_m$; taking expectations gives
$2p_mr_0w_i$.

(ii) Claim (i) cannot be carried over to the maximum, because
$\E\max_iZ_i\le\max_i\E Z_i$ is false. Instead,
\[
  \lim_kM_k\ \le\ \sum_{k\ge0}2(1-\beta)R_k\,
    \mathbf1\{\text{at step }k\text{ at least one coordinate is wrong}\},
\]
and the probability of the event is bounded by a union bound ($\le np_m$; no
independence needed) or, under conditional independence, by $1-(1-p_m)^n$.
Multiplying by $\sum_k2(1-\beta)\beta^kr_0=2r_0$ gives both bounds. Note that
$1-(1-p_m)^n$ is an upper bound and not an exact value: equality would require
equal error probabilities and all $n$ coordinates to be relevant
($e_{k,i}\ne0$).

(iii) A union bound over $K$ steps and $n$ coordinates gives failure probability
at most $nKp_m\le nKe^{-2\gamma^2m}\le\delta$; on the complement
Theorem~\ref{th:csc} applies.\qed
\end{proof}

\begin{example}[the dimension must not be lost]\label{ex:vote-dim}
Let $n=3$, $f(x)=\|x\|_2^2/2$, $x^*=0$, $w=\mathbf1$, $c_0=\mathbf1$, $r_0=1$,
$\beta=1/2$, $m=1$, and let each sign be independently wrong with probability
$p=0.1$. If at least one sign is wrong at the first step, then
$\lim_kM_k\ge1$ by Lemma~\ref{lem:lock}. Hence
\[
  \E\lim_kM_k\ \ge\ 1-(1-p)^3=0.271\ >\ 0.2=2pr_0 ,
\]
so the coordinatewise bound (i) is invalid for the maximum norm. The bound
$2r_0[1-(1-p)^3]=0.542$ does hold; a reproducible finite-budget experiment is given in Appendix~\ref{app:experiments}.
\end{example}

\subsection{Multistart}
\label{sec:multistart}

\begin{theorem}[multistart]\label{th:multistart}
Let $G\subseteq Q$ be the set of centres from which the method reaches an
$\eps$-neighbourhood of the global minimiser within a fixed budget, and let
$\pi=\mathrm{vol}(G)/\mathrm{vol}(Q)>0$. Then among $M$ independent uniform
starts at least one is successful with probability
$1-(1-\pi)^M\ge1-e^{-\pi M}$; for reliability $1-\delta$ it suffices to take
$M\ge\pi^{-1}\ln(1/\delta)$.\qed
\end{theorem}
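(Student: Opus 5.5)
The statement to prove is Theorem~\ref{th:multistart}. The plan is a direct probability computation, followed by one elementary inequality.

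First I fix the sampling model. The $M$ starting centres $X_1,\dots,X_M$ are independent and uniform on $Q$. The method is deterministic given its start, or more generally its behaviour from a start is fixed by the tie rule. Hence whether run $j$ succeeds is exactly the event $\{X_j\in G\}$. For this to be an event I need $G$ to be measurable; this is implicit in the hypothesis that $\mathrm{vol}(G)$ is defined. Each such event then has probability $\mathrm{vol}(G)/\mathrm{vol}(Q)=\pi$.

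The failure event is that no run succeeds, i.e. $\bigcap_j\{X_j\notin G\}$. By independence its probability is $(1-\pi)^M$, so the success probability is $1-(1-\pi)^M$. The inequality $1-t\le e^{-t}$ then gives $(1-\pi)^M\le e^{-\pi M}$, which is the stated lower bound $1-(1-\pi)^M\ge1-e^{-\pi M}$.

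For the sample-size claim, take $M\ge\pi^{-1}\ln(1/\delta)$. Then $e^{-\pi M}\le\delta$, so the success probability is at least $1-\delta$.

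There is no real obstacle here. The only point needing care is the modelling assumption that success depends only on the start, and not on any shared randomness across runs. If the runs themselves used internal randomness, $G$ would have to be replaced by a success probability $\pi(x)$ depending on the start $x$. Independence of the runs would still give the same bound, now with $\pi$ equal to the average of $\pi(x)$ over $Q$. I would remark on this but keep the deterministic reading as stated.
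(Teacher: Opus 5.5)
Your argument is correct and is the routine computation the paper leaves implicit: it states the theorem with only an end-of-proof mark. Independence gives failure probability $(1-\pi)^M$, the bound $0\le 1-\pi\le e^{-\pi}$ gives the exponential estimate, and $\pi M\ge\ln(1/\delta)$ gives reliability $1-\delta$; your remarks on measurability of $G$ and on internally randomised runs are sensible but not needed in the stated deterministic setting.
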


\begin{remark}[what is guaranteed, and how to select]
The theorem guarantees the \emph{existence} of a successful run but gives no
rule for identifying it. If the goal is $\eps$-optimality \emph{in function
value}, then $M-1$ exact comparisons of the final candidates select a point no
worse than the successful one, and the guarantee is preserved. For
$\eps$-closeness \emph{in the argument} this is not enough: an additional growth
condition, e.g. $f(x)-f^*\ge\mu\,\mathrm{dist}(x,x^*)^\alpha$, is required.
Finally, if $G$ is a basin of size $\rho$ inside a cube of size $R$, then
$\pi\approx(\rho/R)^n$ and $M\approx(R/\rho)^n\ln(1/\delta)$: multistart widens
the class but pays an exponential price.
\end{remark}

\section{The probabilistic setting}
\label{app:random}

Worst-case and ensemble statements answer different questions. The setting ``the instance is drawn at
random from a class; the method reaches the required accuracy with probability
at least $1-\delta$'' requires an ensemble to be fixed. A tractable model is a Gaussian perturbation of the identity.

\begin{definition}[the ensemble $\mathcal{E}_n(\sigma)$]\label{def:ensemble}
Let $W$ be symmetric with independent (up to symmetry) entries
$W_{ij}\sim\mathcal{N}(0,1)$ for $i\ne j$ and $W_{ii}\sim\mathcal{N}(0,2)$; this
is the standard Gaussian orthogonal ensemble (GOE). The ensemble
$\mathcal{E}_n(\sigma)$ consists of the quadratic functions
$f(x)=\frac12\ip{H(x-x^*)}{x-x^*}$ with
$H=I+\frac{\sigma}{\sqrt n}\,W$, $\sigma\ge0$.
\end{definition}

Positive definiteness and a universal sign certificate are different events.
The following finite-dimensional bounds avoid relying on a spectral-limit
approximation.

\begin{theorem}[explicit bounds on the defect]\label{th:ensemble}
Let $n\ge2$, $0<\delta<1$ and
\[
  \ell=\ln\frac{4n}{\delta},\quad
  a=\sqrt{2(n-1)\ell},\quad
  b=2\sigma\sqrt{\frac{\ell}{n}},\quad
  m_0=\sqrt{2/\pi}\approx0.7979 .
\]
If $b<1$, then with probability at least $1-\delta$ one has $H_{ii}>0$ for all
$i$ and
\begin{equation}\label{eq:GOE-explicit}
  \frac{\sigma}{\sqrt n}\cdot\frac{\bigl[(n-1)m_0-a\bigr]_+}{1+b}
  \ \le\ \theta_{\mathrm{opt}}(H)\ \le\
  \frac{\sigma}{\sqrt n}\cdot\frac{(n-1)m_0+a}{1-b}.
\end{equation}
\end{theorem}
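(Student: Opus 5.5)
The plan is to bound $\rho(A)=\theta_{\mathrm{opt}}(H)$ (Theorem~\ref{th:perron}) between the minimum and maximum row sums of $A=D^{-1}|B_H|$. Nonnegative matrices satisfy $\min_i\sum_jA_{ij}\le\rho(A)\le\max_i\sum_jA_{ij}$. The upper bound is the case $w=\mathbf1$ of the row-sum estimate in the proof of Theorem~\ref{th:perron}. The lower bound follows from the Collatz--Wielandt inequality, or from $A\mathbf1\ge(\min_i\text{row sum})\mathbf1$ together with monotonicity of $\rho$. Writing $D_{ii}=1+\sigma W_{ii}/\sqrt n$ and $|B_H|_{ij}=\sigma|W_{ij}|/\sqrt n$, the $i$-th row sum of $A$ equals
\[
 \frac{\sigma}{\sqrt n}\cdot\frac{\sum_{j\ne i}|W_{ij}|}{1+\sigma W_{ii}/\sqrt n}.
\]
It therefore suffices to control, simultaneously for all $i$, the numerator $S_i=\sum_{j\ne i}|W_{ij}|$ and the denominator. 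These two controls will be obtained from two families of $n$ events, each of failure probability $\delta/(2n)$.

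For the diagonal, $W_{ii}\sim\mathcal N(0,2)$, so the Gaussian tail gives $\Prob(|W_{ii}|>s)\le2e^{-s^2/4}$. Taking $s=2\sqrt{\ell}$ gives failure probability $2e^{-\ell}=\delta/(2n)$, and then $|\sigma W_{ii}/\sqrt n|\le2\sigma\sqrt{\ell/n}=b$. When $b<1$ this yields $1-b\le H_{ii}\le1+b$, and in particular $H_{ii}>0$.

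For the numerator, each $S_i$ is a sum of $n-1$ independent half-normals with mean $m_0$. The map $(W_{ij})_{j\ne i}\mapsto\sum_j|W_{ij}|$ is $\sqrt{n-1}$-Lipschitz in the Euclidean norm. Gaussian concentration for Lipschitz functions (Tsirelson--Ibragimov--Sudakov) then gives
\[
 \Prob\bigl(|S_i-(n-1)m_0|>t\bigr)\le2e^{-t^2/(2(n-1))}.
\]
With $t=a=\sqrt{2(n-1)\ell}$ this bound is again $2e^{-\ell}=\delta/(2n)$. A union bound over the $2n$ events gives total failure probability at most $\delta$.

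On the good event, every row sum lies between $\frac{\sigma}{\sqrt n}\frac{[(n-1)m_0-a]_+}{1+b}$ and $\frac{\sigma}{\sqrt n}\frac{(n-1)m_0+a}{1-b}$. The positive part in the lower bound is harmless because $S_i\ge0$. Combining this with the row-sum sandwich for $\rho(A)$ gives \eqref{eq:GOE-explicit}.

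The main obstacle is the bookkeeping in the concentration step. One must check that the Lipschitz constant and the variance-$2$ diagonal tail produce exactly the constants $\ell=\ln(4n/\delta)$, $a$ and $b$. If the Lipschitz-concentration constant turns out to be off, the first fallback is a Chernoff bound for sub-Gaussian centered $|W_{ij}|-m_0$: the half-normal is $1$-Lipschitz in a Gaussian variable, so its centered version has variance proxy $1$, which gives the same form. No independence between rows is needed anywhere, since only union bounds are used.
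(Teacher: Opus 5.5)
Your proposal is correct and follows essentially the same route as the paper: the min/max row-sum sandwich for $\rho(A)=\theta_{\mathrm{opt}}(H)$, Gaussian Lipschitz concentration for $S_i$ with constant $\sqrt{n-1}$, the $\mathcal N(0,2)$ tail for the diagonal, and a union bound over $2n$ events, each of probability $\delta/(2n)$. Your constant bookkeeping (both exponents equal $\ell$) matches the paper exactly, so the sub-Gaussian fallback is not needed.
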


\begin{proof}
For $\sigma=0$ the assertion is deterministic; assume $\sigma>0$.
Write $S_i=\sum_{j\ne i}|W_{ij}|$. This is a function of $n-1$ independent
standard Gaussians, Lipschitz with constant $\sqrt{n-1}$ in the Euclidean norm,
so by the Gaussian concentration inequality~\cite[Theorem 3.25]{VanHandel}
\[
  \Prob\bigl\{|S_i-\E S_i|>a\bigr\}\le2e^{-a^2/(2(n-1))}=\frac{\delta}{2n},
  \qquad \E S_i=(n-1)m_0 .
\]
For the diagonal, $W_{ii}\sim\mathcal N(0,2)$ and
$\Prob\{|H_{ii}-1|>b\}\le2e^{-nb^2/(4\sigma^2)}=\frac{\delta}{2n}$. A union
bound over $i$ produces an event of probability at least $1-\delta$ on which
simultaneously $1-b\le H_{ii}\le1+b$ (in particular $H_{ii}>0$, since $b<1$) and
$|S_i-(n-1)m_0|\le a$ for all $i$.

On that event consider $A=D^{-1}|H-D|\ge0$, whose row sums are
$\frac{\sigma}{\sqrt n}S_i/H_{ii}$. For a nonnegative matrix the spectral radius
is squeezed between the smallest and the largest row sum~\cite[Ch.
2]{BermanPlemmons1994}, and $\theta_{\mathrm{opt}}(H)=\rho(A)$ by
Theorem~\ref{th:perron}. Substituting the bounds for $S_i$ and $H_{ii}$
yields~\eqref{eq:GOE-explicit}.\qed
\end{proof}

\begin{corollary}[the criterion and its quantifiers]\label{cor:ensemble}
On the event of Theorem~\ref{th:ensemble}:
\begin{enumerate}[leftmargin=*,label=\textup{(\alph*)},itemsep=1pt]
\item if the right-hand side of~\eqref{eq:GOE-explicit} is less than $1$, then
  $H$ is positive definite and strictly generalised diagonally dominant and by
  Corollary~\ref{cor:hmat} there \emph{exists} a pair $(\beta,w^*)$ for which
  the method converges linearly from \emph{all} boxes containing the solution;
\item if the left-hand side exceeds $1$, then $\theta_{\mathrm{opt}}(H)>1$, and
  by Theorem~\ref{th:quad-iff} \emph{for every} fixed pair $(\beta,w)$ there
  \emph{exists} a starting box on which the method fails.
\end{enumerate}
Note the order of the quantifiers: crossing the threshold changes the
possibility of a \emph{universal guarantee over all starting boxes}, not the
probability of failure from a random start.
\end{corollary}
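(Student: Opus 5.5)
Both parts reduce, on the high-probability event of Theorem~\ref{th:ensemble}, to results already proved for a fixed symmetric matrix, so the plan is to condition on that event and treat $H$ as deterministic. On the event, every diagonal entry is positive, so $D$, $A=D^{-1}|H-D|$ and $\theta_w(H)$ are well defined. Theorem~\ref{th:perron} gives $\theta_{\mathrm{opt}}(H)=\rho(A)$, and this is exactly the quantity sandwiched in~\eqref{eq:GOE-explicit}.

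\emph{Part (a).} Suppose the right-hand side of~\eqref{eq:GOE-explicit} is below $1$. Then $\rho(A)<1$, and Theorem~\ref{th:perron} supplies an attaining aspect $w^*>0$ with $\theta_{w^*}(H)=\rho(A)$. I would then take any $\beta\in[(1+\rho(A))/2,1)$, which is nonempty because $\rho(A)<1$. This choice gives $\theta_{w^*}(H)\le2\beta-1$.

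The sufficiency half of Theorem~\ref{th:quad-iff} then yields three conclusions. First, $H$ is positive definite, via the Gershgorin/congruence argument applied to $\diag(w^*)H\diag(w^*)$. Second, $M_k\le\beta^kr_0$ holds from every box containing $x^*$ and for every tie rule. Third, strict generalised diagonal dominance follows, since $\diag(w^*)H\diag(w^*)$ is strictly diagonally dominant; equivalently, this is the characterisation $\rho(A)<1$ in Corollary~\ref{cor:hmat}.

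\emph{Part (b).} Suppose the left-hand side exceeds $1$. Then $\rho(A)>1$. For any fixed $w>0$, the Collatz--Wielandt inequality from the proof of Theorem~\ref{th:perron} gives $\theta_w(H)\ge\rho(A)>1$. For any $\beta<1$ we have $2\beta-1<1$, so $\theta_w(H)>2\beta-1$.

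The necessity half of Theorem~\ref{th:quad-iff} then applies to each pair $(\beta,w)$. It produces a row $i$ and the explicit starting centre $x^*+u$ constructed there, at which the first sign is wrong without a tie. The locking lemma then forces a positive residual. Here one should note the hypotheses of Theorem~\ref{th:quad-iff}: it requires only a symmetric $H$ with positive diagonal, and positive definiteness is not needed for necessity. This matters because $H$ may be indefinite in this regime.

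The only delicate point is bookkeeping of quantifiers rather than any computation. The probabilistic event is fixed first. On it, $H$ is deterministic. The existential or universal statements over $(\beta,w)$ and starting boxes are then purely deterministic consequences. Hence nothing is claimed about the failure probability from a random start, which is the caveat in the statement. I expect no real obstacle beyond checking that the strict inequalities (right-hand side $<1$, left-hand side $>1$) transfer correctly to the strict conditions in Corollary~\ref{cor:hmat} and Theorem~\ref{th:quad-iff}.
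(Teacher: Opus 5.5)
Your proposal is correct and follows the paper's own reasoning. On the event of Theorem~\ref{th:ensemble} the matrix is deterministic with positive diagonal; part (a) is Theorem~\ref{th:perron} plus the sufficiency half of Theorem~\ref{th:quad-iff}, which is exactly how Corollary~\ref{cor:hmat} is proved, and part (b) feeds $\theta_w(H)\ge\rho(A)>1>2\beta-1$ into the necessity half of Theorem~\ref{th:quad-iff}, whose construction needs only symmetry and a positive diagonal (as you correctly note, since $H$ may be indefinite there).
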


\begin{remark}[asymptotics versus a certificate]\label{rem:asympt}
For $0<\sigma=O(n^{-1/2})$ and $\ln(n/\delta)=o(n)$ both sides
of~\eqref{eq:GOE-explicit} are asymptotically equal to
$\tau:=\sigma\sqrt{2n/\pi}$, so the threshold picture ``$\tau=1$'' survives, now
with an explicit parameter range and an explicit event. It is useful to keep the
difference between an asymptotic and a certificate in mind: for $n=50$,
$\delta=0.01$ and $\sigma=0.15$ the right-hand side of~\eqref{eq:GOE-explicit}
is about $1.72$, and a rigorous certificate of strict diagonal dominance is
obtained only for $\sigma<0.0924$. This does not mean that the method fails at
$\sigma=0.15$---it means that a conservative concentration bound is not yet a proof at that
dimension.
\end{remark}

The event $\rho(A)<1$ certifies a suitable geometry for every admissible
starting box. A finite-budget success frequency from random starts is a
different quantity. Figure~\ref{fig:random} reports both with their denominators
and uncertainty intervals; neither is inferred from the other.

\subsection*{Sparse ensembles}

\begin{proposition}[deterministic bound]\label{prop:sparse}
Let $H$ be symmetric with $H_{ii}>0$ and at most $d\ge1$ nonzero off-diagonal
entries in each row. Then
\[
  \theta_{\mathrm{opt}}(H)\ \le\ \theta_{\mathbf1}(H)\ \le\
  d\,\frac{\max_{i\ne j}|H_{ij}|}{\min_iH_{ii}} . \eqno\qed
\]
\end{proposition}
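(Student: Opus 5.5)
The plan chains the two inequalities separately. For the first, $\theta_{\rm opt}(H)\le\theta_{\mathbf1}(H)$, I would simply note that $\theta_{\rm opt}(H)=\min_{w>0}\theta_w(H)$ by Theorem~\ref{th:perron}, and that $w=\mathbf1$ is an admissible positive weight. There is nothing further to check, since the positive-diagonal hypothesis is exactly what that theorem requires.

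For the second inequality I would unwind the definition at $w=\mathbf1$:
\[
 \theta_{\mathbf1}(H)=\max_i\frac{\sum_{j\ne i}|H_{ij}|}{H_{ii}}.
\]
Fix a row $i$. By hypothesis, at most $d$ of the off-diagonal terms in that row are nonzero, and each one is bounded by $\max_{k\ne j}|H_{kj}|$. Hence the numerator is at most $d\max_{k\ne j}|H_{kj}|$. The denominator satisfies $H_{ii}\ge\min_kH_{kk}>0$, so the ratio is at most $d\max|H_{kj}|/\min_kH_{kk}$. Taking the maximum over $i$ then gives the claim.

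No step is a real obstacle. The only point to mention is that the bound stays valid when a row has fewer than $d$ nonzero entries, or when all off-diagonal entries vanish. In the latter case both sides are $0$, the maximum over an empty set of nonzero entries is read as $0$, and so $\theta_{\rm opt}=0$ is consistent with the right-hand side.
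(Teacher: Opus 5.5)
Your argument is correct and is exactly the verification the paper leaves implicit (it states the proposition without proof): $w=\mathbf 1$ is admissible in the minimum defining $\theta_{\mathrm{opt}}(H)$, and each row ratio of $\theta_{\mathbf 1}(H)$ is bounded termwise by $d\max_{i\ne j}|H_{ij}|/\min_i H_{ii}$. One cosmetic point: when all off-diagonal entries vanish and $n\ge2$, no empty-maximum convention is needed, because $\max_{i\ne j}|H_{ij}|$ ranges over all off-diagonal positions and simply equals $0$.
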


\begin{proposition}[random $d$-regular ensemble]\label{prop:sparse-rand}
Let $G$ be a fixed $d$-regular graph on $n$ vertices, with $d\ge1$,
$\sigma\ge0$, $0<\delta<1$ and $H_{ii}=1$. Let the
edge weights $H_{ij}=H_{ji}=\sigma Z_{ij}/\sqrt d$ be independent with
$Z_{ij}\sim\mathcal N(0,1)$. Then with $\ell=\ln(2n/\delta)$ and
$a=\sqrt{2d\ell}$, with probability at least $1-\delta$,
\[
  \frac{\sigma}{\sqrt d}\bigl[d\,m_0-a\bigr]_+\ \le\
  \theta_{\mathrm{opt}}(H)\ \le\
  \frac{\sigma}{\sqrt d}\bigl(d\,m_0+a\bigr).
\]
Both sides are relatively close to $\sigma\sqrt{2d/\pi}$ only when
$\ell=o(d)$: the weaker condition $d\gtrsim\ell$ gives control of the scale with
a constant relative error only. At fixed $d$ and unbounded Gaussian weights,
rare large edges become significant as $n\to\infty$, so the statement ``the
threshold $\sigma_{\mathrm{crit}}\approx\sqrt{\pi/(2d)}$ does not depend on the
dimension'' is to be read as a statement about the leading scale under
$\ln(n/\delta)=o(d)$, not as a guarantee uniform in $n$.
\end{proposition}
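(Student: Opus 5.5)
The plan mirrors the proof of Theorem~\ref{th:ensemble}: bound $\theta_{\mathrm{opt}}(H)=\rho(A)$ between the smallest and largest row sums of the nonnegative matrix $A=D^{-1}|H-D|$, and control those row sums by Gaussian concentration. Since $H_{ii}=1$, we have $D=I$ and $A=|H-D|$. Row $i$ has sum
\[
 \frac{\sigma}{\sqrt d}S_i,\qquad S_i=\sum_{j\sim i}|Z_{ij}|,
\]
where the sum runs over exactly $d$ neighbours because $G$ is $d$-regular. The diagonal is deterministic, so, unlike the GOE case, no diagonal event and no factors $1\pm b$ are needed. This explains the union bound over only $n$ events and the choice $\ell=\ln(2n/\delta)$.

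For a fixed $i$, $S_i$ is a function of $d$ independent standard Gaussians. It is Lipschitz with constant $\sqrt d$ in the Euclidean norm, by Cauchy--Schwarz applied to $\sum|z_j|$, and $\E S_i=dm_0$. The Gaussian concentration inequality cited in Theorem~\ref{th:ensemble} gives
\[
 \Prob\{|S_i-dm_0|>a\}\le2e^{-a^2/(2d)}=2e^{-\ell}=\delta/n
\]
for $a=\sqrt{2d\ell}$. A union bound over the $n$ rows yields an event of probability at least $1-\delta$ on which $dm_0-a\le S_i\le dm_0+a$ for every $i$. The $S_i$ for different $i$ share edge variables, but this is irrelevant because we only use the union bound.

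On that event, the row-sum sandwich for nonnegative matrices \cite[Ch.~2]{BermanPlemmons1994} gives
\[
 \min_i\frac{\sigma}{\sqrt d}S_i\le\rho(A)\le\max_i\frac{\sigma}{\sqrt d}S_i .
\]
Theorem~\ref{th:perron} identifies $\rho(A)$ with $\theta_{\mathrm{opt}}(H)$. The lower bound is truncated at zero because $\rho(A)\ge0$ trivially. For $\sigma=0$ everything is deterministic and equals zero.

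No step is genuinely hard. The one point needing care is the Lipschitz constant: it must be $\sqrt d$, the square root of the number of summands, and not $\sqrt n$. Regularity is used exactly here and in the lower row-sum bound; a star graph would break the lower bound. The closing remarks about relative accuracy follow by dividing $a$ by $dm_0$: the ratio $\sqrt{2\ell/d}/m_0$ tends to zero precisely when $\ell=o(d)$.
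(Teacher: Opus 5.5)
Your proposal is correct and is exactly the paper's argument. The paper proves this proposition by repeating the proof of Theorem~\ref{th:ensemble} with $n-1$ replaced by $d$ and the diagonal event dropped, because $H_{ii}=1$ is deterministic. You do precisely this: Lipschitz constant $\sqrt d$, tail $2e^{-\ell}=\delta/n$ per row, a union bound over $n$ rows, then the row-sum sandwich combined with Theorem~\ref{th:perron}. Your computation $a/(dm_0)=\sqrt{\pi\ell/d}$ also correctly justifies the closing remark about $\ell=o(d)$.
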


\begin{proof}
A verbatim repetition of the proof of Theorem~\ref{th:ensemble} with $n-1$
replaced by $d$ and a deterministic diagonal.\qed
\end{proof}

\begin{remark}[why regularity is needed]\label{rem:star}
The condition ``at most $d$ entries per row'' does not suffice for a two-sided
bound. Consider a star: vertex $0$ joined to $d$ leaves, $H_{00}=H_{jj}=1$,
$H_{0j}=\sigma Z_j/\sqrt d$. Here $D^{-1}|H-D|$ has nonzero entries only in the
first row and column, and its spectral radius equals
$\frac{\sigma}{\sqrt d}\bigl(\sum_{j=1}^dZ_j^2\bigr)^{1/2}\to\sigma$ as
$d\to\infty$---of order $\sigma$, not $\sigma\sqrt d$. (Directly: a matrix of
the form $\bigl(\begin{smallmatrix}0&v^{\!\top}\\v&0\end{smallmatrix}\bigr)$ has
spectral radius $\|v\|_2$.) For a fixed $\sigma<1$ the corresponding $H$ is
positive definite with probability tending to one, so the example stays inside
convex optimisation.
\end{remark}

\section{Additional numerical evidence}\label{app:experiments}

This appendix supplements the matched comparisons in Sect.~\ref{sec:experiments}.
The examples distinguish a proved invariant, a finite-instance observation,
and a probability estimated from independent repetitions. No experiment replaces
a universal quantifier in a theorem. All displayed figures are generated from
saved numerical data; captions identify the oracle, budget and error metric.

\subsection{An exclusion that no later step can repair}
Figure~\ref{fig:counterex} shows the strongly convex quadratic in
Counterexample~\ref{cex:2x2}. Its first gradient is $(-0.1,-1.1)$, so both
coordinates move upwards: $c_1=(1.4,-0.5)$. The first child already excludes
$0$. The narrow sequence of later boxes cannot cross back over its left
boundary $x_1=0.9$. Smooth elliptical contours therefore do not protect this
particular greedy localisation scheme from an irreversible mistake.

\subsection{Random Hessians: universal certificates and random-start outcomes}
For $n=30$ and $15$ equally spaced values $\tau\in[0.1,2.2]$, generate
$60$ independent matrices per value according to
\[
 \sigma=\frac{\tau}{\sqrt{2n/\pi}},\qquad H=I+\frac{\sigma}{\sqrt n}W,
\]
with the GOE convention of Sect.~\ref{sec:extensions}. Targets and initial
centres are independently uniform on $[-1,1]^n$. All frequencies have
denominator $60$. Non-positive-definite matrices are recorded, not resampled;
for performance curves they count as failures. This makes the displayed
frequencies unconditional over the specified ensemble.

On positive definite instances the two methods receive $260$ gradients each.
The first uses $w=\mathbf1$, $\beta=1/2$. The second uses Perron aspect $w$
and $\beta=\min\{0.95,(1+\rho(A))/2\}$. The latter is a \emph{fixed-aspect}
rule, not the adaptive half-size algorithm of Theorem~\ref{th:adaptive}.
Each initial box contains its target. Success is measured using the same
unweighted criterion
$\|c_{260}-x^*\|_\infty/\|c_0-x^*\|_\infty\le10^{-2}$, rather than a different
weighted norm for each method. Matrix setup is separate from the gradient
budget. If $\rho(A)>0.9$, the cap $\beta=0.95$ may sacrifice the universal
certificate even when $\rho(A)<1$.

\begin{figure}[htbp]
\centering\includegraphics[width=\textwidth]{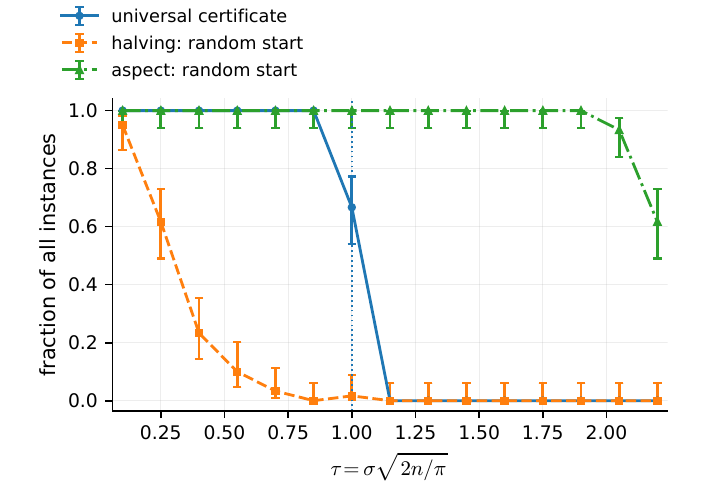}
\caption{GOE experiment, $n=30$, $60$ independent instances per horizontal
value. Error bars are nominal $95\%$ Wilson intervals for unconditional
binomial frequencies. The certificate event is $\rho(A)<1$; the other curves
measure success from sampled starts after $260$ gradients. The line $\tau=1$
marks an asymptotic scale, not an exact finite-dimensional probability threshold.
Failures of a universal certificate and successes of individual runs can coexist}
\label{fig:random}
\end{figure}

\subsection{The cost of an anisotropic box}
For the one-dimensional Dirichlet Laplacian of size $n$,
$H=\operatorname{tridiag}(-1,2,-1)$, the analytic Perron weights are
$w_i=\sin(i\pi/(n+1))$ and $\theta=\cos(\pi/(n+1))$.
We normalise $\max_iw_i=1$ and set $\beta=(1+\theta)/2$. The computed
weights are checked against this analytic vector. Box splitting and Jacobi
start from the \emph{same} point uniform on $[-1,1]^n$ and stop at relative
weighted error $10^{-6}$. Jacobi uses $x_{k+1}=x_k-D^{-1}Hx_k$ with the exact
zero target; a matrix--vector product is charged at each step.

Figure~\ref{fig:laplace} and Table~\ref{tab:laplace} compare these iteration counts.
\begin{figure}[htbp]
\centering\includegraphics[width=\textwidth]{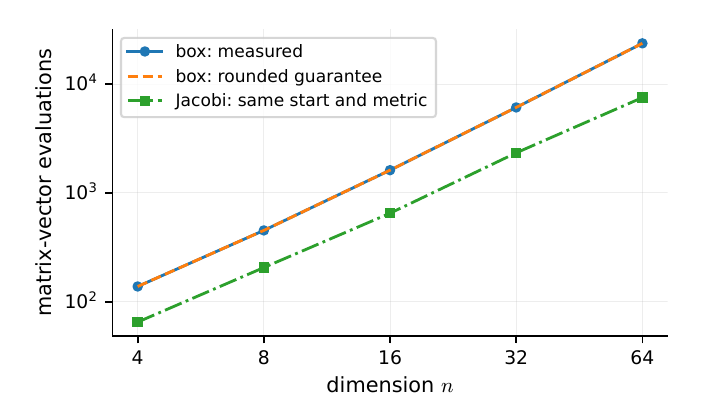}
\caption{Laplacian, common initial point and weighted stopping criterion.
Measured box iterations coincide here with the rounded worst-case guarantee;
this equality is an observation for these starts, not an assertion for all
initial points. Jacobi is a full-magnitude method and its spectral cancellation
is not captured by an absolute sign-coupling matrix}
\label{fig:laplace}
\end{figure}

\begin{table}[htbp]
\caption{Laplacian counts at relative weighted tolerance $10^{-6}$. The last
column is the unrounded aspect overhead $\log(\max w/\min w)/\log(1/\beta)$,
not part of the stopping count for an already admissible weighted box.}
\label{tab:laplace}
\begin{tabularx}{\textwidth}{@{}rLrrr@{}}
\toprule
$n$ & $\theta$ & Box & Jacobi & Aspect overhead\\
\midrule
4 & 0.809017 & 138 & 65 & 4.79\\
8 & 0.939693 & 452 & 206 & 34.54\\
16 & 0.982973 & 1616 & 650 & 197.65\\
32 & 0.995472 & 6096 & 2332 & 1037.74\\
64 & 0.998832 & 23655 & 7509 & 5187.43\\
\bottomrule\end{tabularx}\end{table}

The extra iterations required to enclose a unit unweighted cube with this
aspect are governed by
$\log(\max w/\min w)/\log(1/\beta)=\Theta(n^2\log n)$.
Thus optimising the contraction alone can conceal a growing initial-radius
cost. Conversely, for $\sum_i|x_i-x_i^*|^{1.6}$ with $r_0=1$ and targets
uniform on $[-0.7,0.7]^n$, dimensions
$10,100,1000,10^4,10^5$ all reached absolute error $10^{-8}$ in $27$ halvings.
The guarantee is dimension-independent in \emph{vector queries}, while data
storage and coordinate updates remain at least linear in $n$.

\subsection{Persistent error: a band and a capped fixed point}
For the abstract sign-band experiment, take $\beta=0.7$, $\theta=0.2$,
$\eta_k=0.05$, $r_0=1$ in dimension $4$. Use $400$ independent initial
vectors, normalised to maximum norm $1$, and $60$ steps. A sign is chosen
wrong whenever the corresponding coordinate is inside the permitted band,
and correctly outside it. This is a stress test of the \emph{sign assumption};
it does not assert that every such sequence is the gradient of a smooth
potential. Figure~\ref{fig:band} plots the largest observed error together
with the deterministic bound of Theorem~\ref{th:noise}.

\begin{figure}[htbp]
\centering\includegraphics[width=\textwidth]{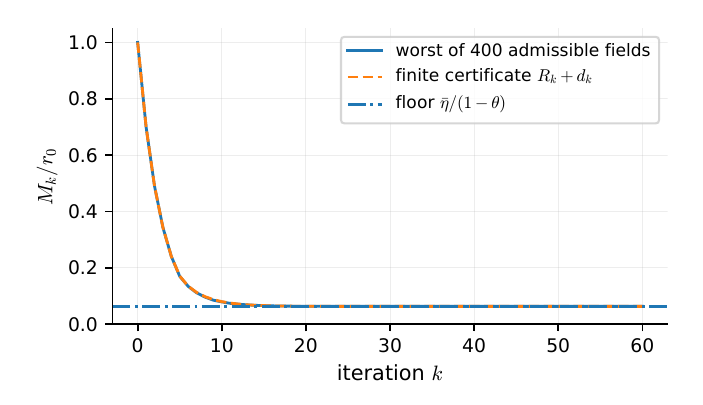}
\caption{Admissible adversarial sign sequences, not independently generated
objective functions. The curve $R_k+d_k$ is the proved finite-time upper bound.
The persistent-band limit $\bar\eta/(1-\theta)=0.0625$ is an upper bound for
this experiment; its sharpness is proved separately by a one-dimensional witness}
\label{fig:band}
\end{figure}

For capped adaptive half-sizes, use the nonnegative matrix and persistent
coordinate error
\[
 A=\begin{pmatrix}0&0.05\\5&0\end{pmatrix},\quad
 \xi=\binom{0.03}{0.04},\quad r^{\mathrm{init}}=\binom{0.02}{1}.
\]
Here $\rho(A)=1/2$, and $A=D^{-1}|H-D|$ for the positive definite
$H=\left(\begin{smallmatrix}100&5\\5&1\end{smallmatrix}\right)$.
The unrestricted resolvent is $g=(0.042\overline6,0.253\overline3)$, whereas
$v=(0.02,0.14)$ solves $v=\min\{r^{\mathrm{init}},Av+\xi\}$.
The smaller first half-size is never needlessly enlarged.

Figure~\ref{fig:capped} shows the coordinatewise radius limit.
\begin{figure}[htbp]
\centering\includegraphics[width=\textwidth]{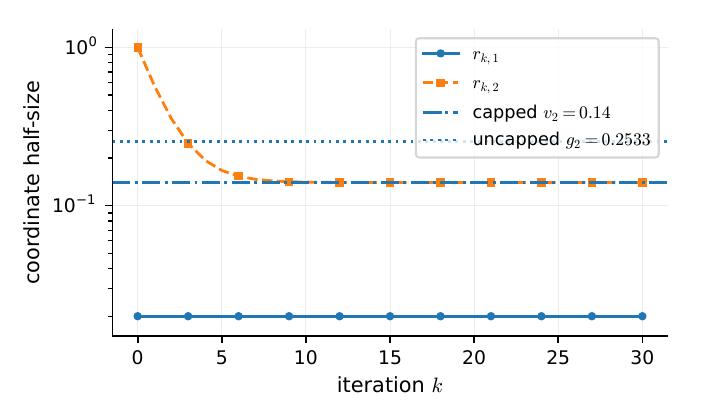}
\caption{Adaptive half-sizes with persistent error. The curves are radii, not
centre errors. The first coordinate remains capped at $0.02$ and the second
converges to $0.14$, strictly below the second component of $(I-A)^{-1}\xi$.
Corollary~\ref{cor:capped-limit} explains both the limit and the finite-time rate}
\label{fig:capped}
\end{figure}

\subsection{Exact and sampled smoothing are different experiments}
Figure~\ref{fig:smooth} illustrates Proposition~\ref{prop:rastrigin} at $A=1$
and fixed $\tau=1.5$. Exact uniform averaging removes the oscillation in every
coordinate derivative. It does not imply that a finite Monte Carlo average
has exact signs.

\begin{figure}[htbp]
\centering\includegraphics[width=\textwidth]{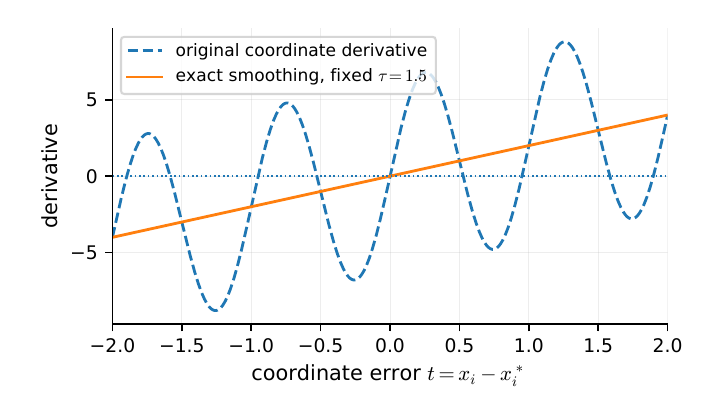}
\caption{Coordinate derivatives before and after \emph{exact} smoothing of
the multimodal quadratic--cosine family. The radius stays fixed at $\tau=1.5$;
the smoothed derivative is $2t$. No sampling is used in this plot}
\label{fig:smooth}
\end{figure}

For the sampled experiment, use $n\in\{3,10\}$, $200$ starting points per
sample size, $K=25$, $r_0=1$, $x^*=0$, $A=1$ and $\tau=1.5$.
Starting centres are uniform on $[-1,1]^n$ and shared across sample sizes.
At every step, average $m\in\{2,8,32,128,512\}$ independently sampled
\emph{original} gradients at $c_k+\tau u$, then halve the box according to
the averaged signs. Each run therefore costs $mK$ raw gradients. The same
fixed smoothing radius is used throughout the run; only the box shrinks.

Figure~\ref{fig:mc} separates the sampled-oracle error from the exact-smoothed bound.
\begin{figure}[htbp]
\centering\includegraphics[width=\textwidth]{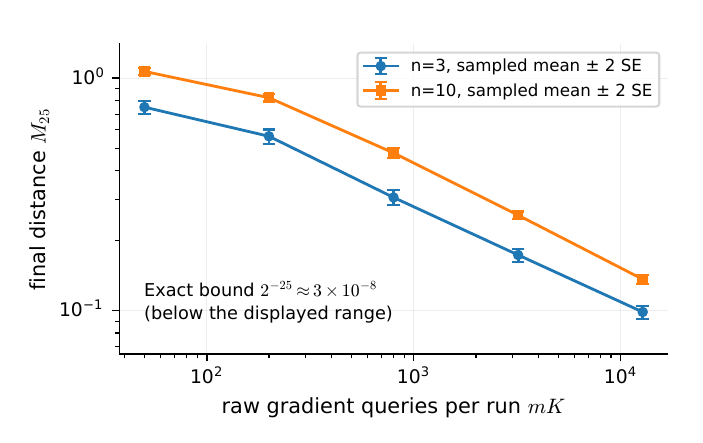}
\caption{Sampled smoothing at a fixed radius. Points are mean maximum-norm
errors over $200$ runs; bars show two estimated standard errors, not uniform
high-probability bounds. The annotated $2^{-25}$ guarantee for an \emph{exact} smoothed oracle lies
below the displayed range. Increasing the sampling budget reduces the
observed error but does not make the exact-oracle theorem automatically
applicable to finite samples}
\label{fig:mc}
\end{figure}

\subsection{Voting and the maximum over coordinates}
Use $n=10$, $\gamma=0.25$, $c_0=\mathbf1$, $r_0=1$, $x^*=0$ and $K=40$.
For each odd $m\in\{1,3,5,9,15,25,41\}$, perform $800$ independent runs.
At each coordinate query the majority is incorrect with its exact binomial
tail probability $p_m$; drawing this Bernoulli event is distributionally
identical to drawing the $m$ independent votes explicitly. The charged
information cost is still $m$ noisy vector queries per step.

Figure~\ref{fig:vote} contrasts coordinatewise and maximum-norm guarantees.
\begin{figure}[htbp]
\centering\includegraphics[width=\textwidth]{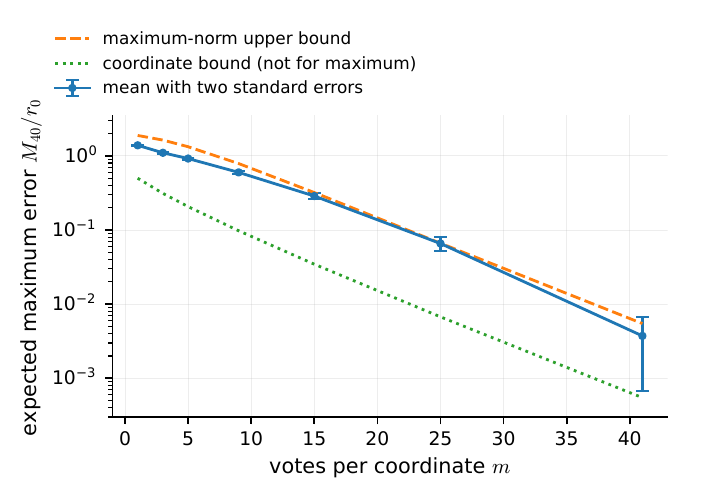}
\caption{Noisy voting: empirical mean of $M_{40}$ with two standard errors,
compared with $2r_0[1-(1-p_m)^n]+R_{40}$. The per-coordinate bound
$2p_mr_0+R_{40}$ is also displayed and cannot be substituted for a bound on
the maximum norm. Rare failures make small-sample estimates noisy at large
$m$; the proof, not empirical coverage of the bars, supplies the upper bound}
\label{fig:vote}
\end{figure}

\subsection{Freezing and independent restarts}
On the quadratic of Counterexample~\ref{cex:2x2}, a threshold just above
$1/11$ skips the initially wrong, small first derivative. Figure~\ref{fig:freeze}
uses the same $120$ full-gradient queries for all thresholds. It illustrates
sensitivity to the active set, not membership in $\DSC_\epsilon$ or uniform
improvement over all targets.

\begin{figure}[htbp]
\centering\includegraphics[width=\textwidth]{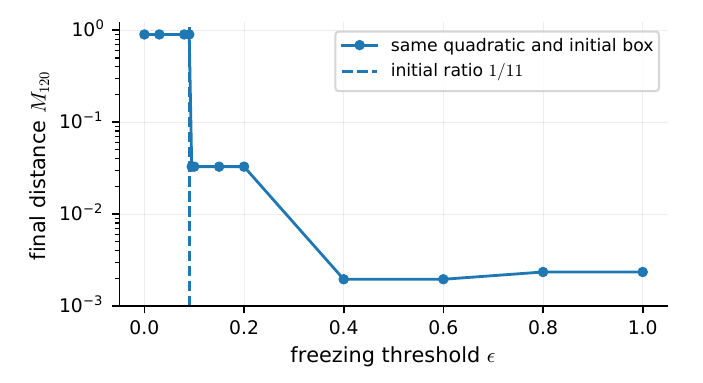}
\caption{Freezing on one two-dimensional instance. Both the centre and
half-size of an inactive coordinate remain unchanged. The vertical line is
the initial derivative ratio $|\partial_1f(c_0)|/\|\nabla f(c_0)\|_\infty=1/11$.
Final errors and coordinate update counts are saved; small initial derivatives
are not generally synonymous with incorrect signs}
\label{fig:freeze}
\end{figure}

For multistart, return to the quadratic--cosine family at $A=1$ in dimension
$3$. Each run starts uniformly on $[-2,2]^3$, uses $r_0=0.5$ and $40$
halving steps. Declare success by final distance at most $0.05$ from the
known global minimiser $0$. For each $M\in\{1,2,5,10,20,40,80\}$, repeat
$M$ independent runs in each of $200$ independent outer trials. Record
separately whether some candidate succeeds and whether the candidate with
the smallest final objective succeeds. Estimate the single-start basin mass
from a separate sample of $4000$ runs. This is a basin experiment: not every
initial box contains the global minimiser.

Figure~\ref{fig:multistart} reports the two notions of multistart success.
\begin{figure}[htbp]
\centering\includegraphics[width=\textwidth]{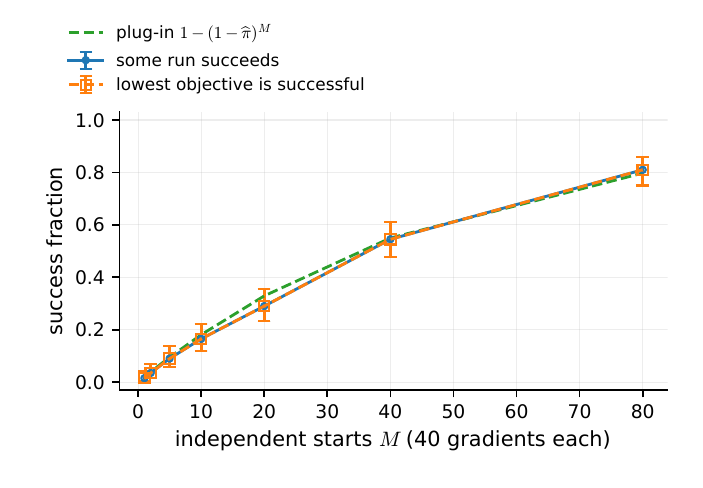}
\caption{Multistart success from independent random centres. Bars are nominal
$95\%$ Wilson intervals. The dashed curve uses an independently estimated
basin mass in $1-(1-\widehat\pi)^M$; it is a plug-in prediction, not an exact
finite-sample guarantee. Existence and objective-based selection are recorded
as separate events; they coincide in every recorded trial here}
\label{fig:multistart}
\end{figure}

\FloatBarrier

\section{Reproduction protocol}\label{app:repro}

\subsection{Files and commands}
Online Resource~1 provides the Python sources, raw CSV tables, saved quadratic
instances, numerical verification results, and the sources for every figure.
The numerical figures are produced by Python, and the two explanatory diagrams
by TikZ. From the resource root directory, run
\begin{verbatim}
python code/verify_results.py
python code/publication_figures.py
python code/additional_figures.py
python code/publication_layout.py
python illustrations/build_diagrams.py
\end{verbatim}
The scripts resolve paths relative to their own locations, not the invoking
working directory. They write data to \texttt{data} and vector PDFs to
\texttt{figures}. They use NumPy, SciPy and Matplotlib; the exact checks use
Python's \texttt{fractions.Fraction}. The recorded execution environment is
Python 3.13.5, NumPy 2.3.5, SciPy 1.17.0 and Matplotlib 3.10.8, with one BLAS
thread. The supplied requirements file specifies these versions. Numerical
round-off may differ across BLAS implementations; data values, rather than
PDF metadata timestamps, are the reproducibility target.

The publication master seed is $20260919$. Each random stream uses a seed sequence with entries
\texttt{master}, \texttt{series} and \texttt{case}. Trials within a vectorised
sample use independent draws from that addressed stream; they are not claimed
to have separate generators when they do not. The additional exploratory
scripts \texttt{exp\_a} through \texttt{exp\_g}, provided as supplementary
computations, use their own documented seeds and budgets. Their sequential
random-number streams should not be confused with the case-addressed
publication protocol. All ten supplementary entry-point scripts were executed successfully.

\subsection{Exact and numerical checks}
The file \texttt{verification.json} lists case counts, tolerances, maximum
observed numerical discrepancies and a pass/fail summary. Exact equality is
required in rational tests. Retention and rate tests allow only documented
floating-point tolerances, and save the largest violation relative to the
proposed upper bound rather than silently clipping it to zero. The envelope
witness is evaluated through its proved analytic sign field, not through
Monte Carlo approximation of $G$ in~\eqref{eq:G-potential}.

For $\beta=0.9999$, $\theta=0.99999$, $r_0=1$, the scalar envelope has
$b_{400}\approx1.03514$, yet its eventual limit is about $1.47112$.
Consequently a fixed-budget envelope evaluator must not return its current
value as an upper bound on the limit before the switch is certified. The
implementation returns the safe bound $b_K+R_K$ when no switch has yet been
detected: all future increases sum to at most $R_K$. Once the first branch
is permanent, it returns $b_J-R_J$. This behaviour has a dedicated regression
test.

For Perron weights, the symmetric nonnegative matrix
$S=D^{-1/2}|H-D|D^{-1/2}$ is split into connected components and its largest
\emph{algebraic} eigenvalue is used on each nontrivial block. Isolated
coordinates receive positive weights. This avoids selecting a negative
extremal eigenvector in a bipartite block or zeroing entire blocks of a
reducible matrix. After numerical normalisation, the implementation recomputes
$\max_i(Aw)_i/w_i$ from the returned weights. A floating-point eigenvector
is a diagnostic approximation, not an interval-arithmetic certificate.

\subsection{Matched quadratic instances and costs}
The diagonal benchmark uses $H=\diag(\operatorname{geomspace}(1,1000,50))$.
For the dense benchmark, generate $Z_{ij}\sim N(0,1/n)$ independently, set
$H=I+0.35(Z+Z^\top)/\sqrt2$, and add
$[0.05-\lambda_{\min}(H)]_+I$. Save $H$, $x^*$, $c_0$ and $r_0$ in an
NPZ file. The target is drawn after the matrix using the same case-addressed
stream, so both the formula and saved instance define the experiment.

The iRprop$^-$ implementation suppresses a coordinate update on a derivative
sign reversal, then stores a zero previous derivative in that coordinate.
The distinction matters: reducing the step while still making that update is
a different algorithm. Gradient descent, Adam and signGD are implemented in
the same module, with parameters reported in the main text. Neither their
step tuning nor evaluation of $L$ is free auxiliary information in an oracle
complexity comparison. The benchmark is a controlled numerical comparison,
not an information-theoretic equivalence between these methods.

All counts distinguish gradient vectors, binary comparisons and scalar values.
A quadratic comparison identity can be evaluated without subtraction noise
when $H$ is explicitly known, but black-box evaluations of two nearby values
must account for cancellation and external noise through
Corollary~\ref{cor:fd}. Adaptive geometry also pays for $Ar_k$; Monte Carlo
smoothing for $m$ gradients per step; majority voting for $m$ noisy sign
vectors; multistart for all runs and final values used in selection. These
costs are reported separately rather than absorbed into a nominal iteration.

\subsection{Scope of the computational evidence}
The numerical plots are regenerated by the supplied scripts; the two
analytical diagrams are constructed from the TikZ sources supplied in Online Resource~1. Data and
figures establish reproducibility of the stated finite tests, not formal
verification of all proofs, exhaustive optimisation of reference-method
parameters, or performance on an unspecified real-world distribution.
The exact sharpness constructions, lower bounds and retention theorems are
proved analytically; stochastic plots indicate their practical limitations
under explicitly stated sampling models.

\end{appendix}
\end{document}